\def\Cal{\mathcal}
\def\ot{\leftarrow} 
\def\<<{\langle } 
\def\>>{\rangle } 
\def\frakl{l}
\def\Im{\operatorname{Im}}
\numberwithin{equation}{section} 
\newtheorem{theorem}{Theorem}[section] 
\newtheorem{proposition}[theorem]{Proposition} 
\newtheorem{corollary}[theorem]{Corollary} 
\newtheorem{definition}[theorem]{Definition} 
\newtheorem{remark}[theorem]{Remark} 
\newtheorem{lemma}[theorem]{Lemma}
\def\<{\langle} 
\def\>{\rangle} 
\def\Re{\operatorname{Re}}
\begin{document}
\title[Period map of triple coverings]
{Period map of triple coverings of $\bold P^2$ and mixed Hodge structures}
\subjclass[2010]{32G20,33C65,14H42}

\author{K. Matsumoto}
\address[Matsumoto]{
Department of Mathematics,
Hokkaido University, 
Sapporo 060-0810, Japan
}
\email{matsu@math.sci.hokudai.ac.jp}
\author{T. Terasoma}
\address[Terasoma]{
Graduate school of Mathematical Sciences, 
The University of Tokyo, Tokyo 153-8914 Japan
}
\email{terasoma@ms.u-tokyo.ac.jp}

\maketitle

\medskip

\begin{abstract}
We study a period map for triple coverings of $\bold P^2$
branching along special configurations of $6$ lines.
Though the moduli space of special configurations is
a two dimensional variety,
the minimal models of the coverings form a one
parameter family of K3 surfaces.
We extract extra one dimensional
information from the mixed Hodge structure 
on the second relative
homology group.
We define the period map from the moduli space
of marked configurations to the domain $\bold B\times \bold C^2$,
where $\bold B$ is the right half plane, and give a defining equation of
its image by a theta function.
We write down the inverse of the period map using theta functions.
\end{abstract}

\setcounter{tocdepth}{1}

\tableofcontents

\setcounter{section}{0}

\section{Introduction}

\subsection{Introduction}
Period integrals of cyclic coverings 
of $\bold P^2$ branching along configurations
of several lines satisfy
a hypergeometric system of linear differential equations.
In the paper \cite{MSTY}, we study 
several examples of {\it reducible} hypergeometric systems of 
differential equations.
We treat a special case where the branch index is equal to 
$(\dfrac{1}{3},\dfrac{1}{3},\dfrac{1}{3},
\dfrac{1}{3},\dfrac{1}{3},\dfrac{4}{3})$ with
the notation in \cite{MSTY} and the branching lines
$\ell_1, \dots, \ell_6$ satisfy the following
conditions.
\begin{enumerate}
 \item The intersection $\ell_i\cap \ell_j \cap \ell_k$
is a point for $(i,j,k)=(1,3,6), (2,4,6)$.
\item
The intersection $\ell_i\cap \ell_j \cap \ell_k$ is an empty set
if $1\leq i<j<k\leq 6$ and $(i,j,k)\neq (1,3,6),(2,4,6)$.
\end{enumerate}
A configuration of $6$ lines $\vec\ell=\{\ell_1, \dots, \ell_6\}$ 
satisfying the above
condition is simply called a {\it special configuration}.
By changing projective coordinates of $\bold P^2$,
we normalize $\ell_i$ as
\begin{align*}
& \ell_1=p,\quad \ell_2=q,\quad 
\ell_3=1-p,\quad \ell_4=1-q,
 \\
\nonumber
&\ell_5=1-x_1p-x_2q,
\end{align*}
and $\ell_6$ is the infinite line,
where $p$ and $q$ are inhomogeneous coordinates of $\bold P^2$.
We identify the moduli space $\Cal M$ of special configurations of $6$ lines
with 
$$
\{(x_1,x_2)\in \mathbf{C}^2 \mid x_1x_2(x_1-1)(x_2-1)(x_1+x_2-1)\ne 0\}
$$
by considering the 
the condition (2) for the normalized lines.

For an element $\vec\ell\in \Cal M$, 
we have a branched cyclic triple covering $X=X_{\vec\ell}$ of 
$\mathbf{P}^2$ defined by
$$
X:z^3=p^{-2}q^{-2}(1-p)^{-2}(1-q)^{-2}(1-x_1p-x_2q)^{-2}.
$$
The period integrals of $X$ satisfy
Appell's hypergeometric
system $F_2$ of differential equations
with the parameters
$(a;b_1,b_2;c_1,c_2)=(2/3;1/3,1/3;2/3,2/3)$.

In this paper, we study the period 
map for a family $\{X_{\vec\ell}\}_
{\vec\ell \in \mathcal{M}}$.

One can see that the minimal compact smooth model $\widetilde{X}$ 
of $X$ is a K3 surface.
Let $E_2$ be the divisor of $\widetilde{X}$
lying over the intersection point $\ell_2\cap \ell_4\cap \ell_6$.
Let 
$\rho$ be an automorphism of $X$ defined by $(p,q,z)\mapsto (p,q,\omega z)$
where 
$\omega=\dfrac{-1+\bold i\sqrt{3}}{2}$.
The invariant part of 
the relative cohomology 
$H_2(\widetilde{X},E_2,\bold Z)$
under the action of $\rho$
is denoted by 
$H_2(\widetilde{X},E_2,\bold Z)^{\rho}$.
Then the quotient group
$$
H_2(\widetilde{X},E_2,\bold Z)_{\rho}=
H_2(\widetilde{X},E_2,\bold Z)/H_2(\widetilde{X},E_2,\bold Z)^{\rho}
$$
becomes a free $\bold Z[\rho]$-module of rank $3$. Here $\bold Z[\rho]=\bold Z\oplus \bold Z\rho$,
with the relation $\rho-2+\rho+1=0$.

By Deligne \cite{D}, $H_2(\widetilde{X},E_2,\bold Z)_{\rho}$
is equipped with a natural mixed Hodge structure,
whose weight $(-2)$-part is identified with $H_2(\widetilde{X},\bold Z)_{\rho}$.
The module $H_2(\widetilde{X},\bold Z)_{\rho}$ is identified with
the generic transcendental part and becomes a lattice by 
the intersection form. In other words,
the module $H_2(\widetilde{X},E_2,\bold Z)_{\rho}$ is equipped with
the graded polarized mixed Hodge structure.
In this paper, we treat the period map for the mixed Hodge structure 
$H_2(\widetilde{X},E_2,\bold Z)_{\rho}$ and its inverse map.

To formulate the period map, we define a marking 
(Definition \ref{def:marked config})
of 
the Hodge structure on $H_2(\widetilde{X},E_2,\bold Z)_{\rho}$.
We introduce a standard module $W_{(-1)}=\<B_1, B_2, B_3\>_{\bold Z[\rho]}$ 
equipped with a filtration
$W_{(-2)}=\<B_1,B_2\>\subset W_{(-1)}$ and a symmetric bilinear form on $W_{(-2)}$.
The marking of
$H_2(\widetilde{X},E_2,\bold Z)_{\rho}$ 
is defined as a $\bold Z[\rho]$-isomorphism
$$
\mu:W_{(-1)}\xrightarrow{\simeq}
H_2(\widetilde{X},E_2,\bold Z)_{\rho}
$$
satisfying some properties
(see Definition \ref{def:marked config}).
A marked configuration $(\overset{\to}\ell,\mu)=((x_1,x_2),\mu)$ 
is defined as a pair of a point $\overset{\to}\ell=(x_1,x_2)$ in $\Cal M$ and
a marking $\mu$ of
$H_2(\widetilde{X},E_2,\bold Z)_{\rho}$.
The moduli space of marked configurations is denoted by $\Cal M_{mk}$.

We set $
\bold B=\{\eta\in \bold C\mid\ \Re(\eta)>0\}$ and
$\Cal D=\bold B\times \bold C^2$.
In \S \ref{sec:priod map triple cov p3},
using the markings $\mu$ of the mixed Hodge structures,
we define the period map 
$$
per:\Cal M_{mk}\to \Cal D
$$
as follows.
Let $\chi$ be a character of the group $\<\rho\>$ generated by $\rho \in Aut(X)$ defined by 
$\chi(\rho)=\omega$ and its complex conjugate is denoted by $\overline{\chi}$.
Then the $\chi$-part 
$F^2H^2_{dR,c}(\widetilde{X}-E_2)(\chi)$ (resp.
$\overline{\chi}$-part
$F^1H^2_{dR,c}(\widetilde{X}-E_2)(\overline{\chi})$)
of $F^2H^2_{dR,c}(\widetilde{X}-E_2)$ (resp. 
$F^1H^2_{dR,c}(\widetilde{X}-E_2)$)
is a $1$-dimensional vector space.
We choose a basis
$\xi$ (resp. $\overline{\xi}$) of it.
Let $\<*,*\>$ be the natural pairing 
between 
$H_2(\widetilde{X},E_2,\bold Z)_{\rho}$ and
the de Rham cohomology with compact support $H^2_{dR,c}(\widetilde{X}-E_2)$.
We define
a (unnormalized) period matrix $P((x_1,x_2),\mu)$ by
$$
P((x_1,x_2),\mu)=
\begin{pmatrix}
p_{11} & p_{12}\\
p_{21} & p_{22}\\
p_{31} & p_{32}\\
\end{pmatrix}, \quad 
p_{i1}=\<\mu(B_i),\xi\>,
p_{i2}=\<\mu(B_i),\overline{\xi}\>.
$$
For an element $((x_1,x_2),\mu)\in \Cal M_{mk}$,
we define the period
$
per((x_1,x_2),\mu)=(\eta,z)\in \Cal D
$
by ratios
\begin{align*}
\eta=\dfrac{p_{11}}{p_{21}},\quad
z=(z_1,z_2)=(
 \frac{p_{31}}{p_{21}},
 \frac{p_{32}}{p_{22}}) 
\end{align*}
of entries of the period matrix $P((x_1,x_2),\mu)$.
Note that the map $per$ does not depend on the choice of $\xi$
and $\overline{\xi}$.

To study the period $P((x_1,x_2),\mu)$,
we give two elliptic fibrations $\epsilon_1,\epsilon_2$
on the K3 surface $\widetilde{X}$ in \S \ref{sec:conf of six lines loc system}.
The fibration $\epsilon_1$ (resp. $\epsilon_2$) is an isotrivial family with a finite monodromy group.
The monodromy covering $C\to \bold P^1$ is defined by the minimal covering of $\bold P^1$
such that the fibration $\epsilon_1$ is trivialized
by the base change by the covering map $C\to \bold P^1$.
Then $C$ is a triple covering of $\bold P^1$
depending only on 
$$
t=\dfrac{1-x_1-x_2}{(1-x_1)(1-x_2)},
$$
and its genus is $2$.
Using the fibration $\epsilon_1$, we obtain a rational map
$\lambda:E\times C_t \dashrightarrow \widetilde{X}$.
The image $\epsilon_1(E_2)$ of $E_2$ under the map $\epsilon_1$
is a one point and the inverse image of $\epsilon(E_2)$
in $C$ is denoted by $\Sigma_1$.
The rational map $\lambda$ induces an injection of mixed Hodge structures
\begin{equation}
\label{basic isom of mixed HS}
H_1(C,\Sigma_1,\bold Z)\otimes_{\bold Z[\rho]} H_1(E,\bold Z) \to
H_2(\widetilde{X},E_2,\bold Z)_{\rho}.
\end{equation}
Via this map, we study the structure of the generic transcendental 
lattice $T_X$ of the K3 surface $\widetilde{X}$
in \S \ref{section:transcendental lattice structure}.

In \S \ref{sec:period integral of the mon curve}, we study the relative
homology 
$H_1(C,\Sigma_1,\bold Z)$.
Let $H_0(\Sigma_1,\bold Z)^0$ be the kernel of the natural map 
$H_0(\Sigma_1,\bold Z) \to H_0(C,\bold Z)$.
Then $H_0(\Sigma_1,\bold Z)^0$ is a free $\bold Z[\rho]$-module generated by
$(1-\rho)p_1$, where $p_1$ is an element in $\Sigma_1$.
We have an exact sequence 
\begin{equation}
\label{exact mixed Hodge curve case} 
0\to H_1(C,\bold Z)\to H_1(C,\Sigma_1,\bold Z)\to 
H_0(\Sigma_1,\bold Z)^0\to 0
\end{equation}
arising from the weight filtration
of the mixed Hodge structure on $H_1(C,\Sigma_1,\bold Z)$.
Using the injection
(\ref{basic isom of mixed HS})
and the marking $\mu$,
we obtain
\begin{enumerate}
\item
a symplectic basis 
$\{\alpha_1,\alpha_2, \beta_1, \beta_2\}$
of $H_1(C,\bold Z)$ satisfying $\alpha_1=
\rho(\beta_2),\alpha_2=\rho(\beta_1)$, 
and 
\item
a lifting $\beta_3$ of 
$(1-\rho)p_1$ in $H_1(C,\Sigma_1,\bold Z)$.
\end{enumerate}
Let $F^1H^1_{dR,c}(C-\Sigma_1)$ be the first Hodge filtration
in $H^1_{dR,c}(C-\Sigma_1)$.
Via the exact sequence (\ref{exact mixed Hodge curve case}),
we have an isomorphism for
the Hodge filtrations
\begin{equation*}
F^1H^1_{dR,c}(C-\Sigma_1)\simeq 
F^1H^0_{dR}(C)=
H^0(C,{\it\Omega}^1_C).
\end{equation*}
Let 
$\psi_1', \psi_2'$ be the
basis of
$H^0(C,{\it\Omega}^1_C)$ normalized by
$\displaystyle
\<\beta_i,\psi_j'\>=\delta_{ij},
$
where $\<*,*\>$
is the natural pairing  between
$H_1(C,\Sigma_1,\bold Z)$
and $H^1_{dR,c}(C-\Sigma_1)$.
We define the normalized period matrix $\tau \in M(2,\bold C)$ 
of $C$ and incomplete integrals $\zeta\in \bold C^2$ 
by 
$$
\tau =
\begin{pmatrix}
\<\alpha_1,\psi'_1\>, \<\alpha_1,\psi'_2\> \\
\<\alpha_2,\psi'_1\>, \<\alpha_2,\psi'_2\> \\
\end{pmatrix},\quad
\zeta =
\begin{pmatrix}
 \<\dfrac{\beta_3}{1-\rho},\psi'_1\> , \<\dfrac{\beta_3}{1-\rho},\psi'_2\> \\
\end{pmatrix}.
$$
Then $\tau$ belongs to 
the Siegel upper half space 
$\bold H_2=\{\tau\in M_2(\bold C)\mid \tau=\ ^t\tau, \Im(\tau)>0\}$
and the class of $\zeta$ 
is in the image of Abel-Jacobi map 
$C \to J(C)=\bold C^2/(\bold Z^2\tau+\bold Z^2)$.
Using the fact that the injection (\ref{basic isom of mixed HS})
is a homomorphism of mixed Hodge structures 
with an action of $\rho$, we can express $\tau$ and $\zeta$ as
\begin{align}
\label{intro:modular embedding}
\tau&=\frac{1}{2}\begin{pmatrix}
\sqrt{-3}\eta^{-1}  & -1 \\
-1 & \sqrt{-3}\eta 
\\
\end{pmatrix},
\\
\nonumber
\zeta&=
\frac{1}{2}\left(
(\dfrac{z_1}{1-\omega}-\dfrac{z_2}{1-\omega^2})\eta^{-1},
\dfrac{z_1}{1-\omega}+\dfrac{z_2}{1-\omega^2}
\right),
\end{align}
where $per((x_1,x_2),\mu)=(\eta,z_1,z_2)$.

For an element $(x_1,x_2)\in \Cal M$ satisfying the conditions 
$x_1,x_2 \in \bold R, 0<x_1,0<x_2, x_1+x_2<1$,
we define explicit
relative topological 2-cycles $\Gamma_1, \Gamma_2, \Gamma_3$ and $\Gamma_4$
of $H_2(\widetilde{X},E_1\cup E_2,\bold Z)$ in 
\S \ref{subsec:relative chain on K3}.
Using 
these topological 2-cycles, we define a marking $\mu$ in 
Example \ref{explicit markings and periods}.
For this marking $\mu$ and a suitable choice of $\xi, \overline{\xi}$, 
the pairings $\<\mu(B_i),\xi\>$
and $\<\mu(B_i),\overline{\xi}\>$
are expressed in terms of hypergeometric integrals
(\ref{integration by stnadard branch}).

Let $\Gamma$ be 
the unitary group 
with the coefficients in $\bold Z[\rho]$
for the hermitian matrix 
$U=\begin{pmatrix}0 & 1 \\ 1 & 0\end{pmatrix}$,
and let $G=\Gamma\ltimes \bold Z[\rho]^2$
be the semi-direct product of $\Gamma$ and $\bold Z[\rho]^2$
obtained by the standard right action of $\Gamma$ on 
$\bold Z[\rho]^2$.
Then the group $G$ is identified with a subgroup of $GL(3,\bold Z[\rho])$.
Its principal congruence subgroup of level $(1-\rho)$
is denoted by $G(1-\rho)$.
Then the group $G(1-\rho)$
acts on the moduli space $\Cal M_{mk}$ by the action
on the set of markings. 
Using the relation (\ref{intro:modular embedding}),
we define an embedding
$\jmath_{\Cal D}:\bold B\times \bold C^2\to \bold H_2\times \bold C^2$ 
by $\jmath_{\Cal D}(\eta,z)=(\tau,\zeta)$,
which is equivariant under the group homomorphism $\jmath_G:G(1-\rho)\to Sp_2(\bold Z)\ltimes \bold Z^4$.

In \S \ref{sec:theta-inv}, we prove that
the image 
$per(\Cal M_{mk})\subset \Cal D$ 
under the period map is a 
codimension one complex analytic space $V(\vartheta)$
defined by the zero locus of a theta function $\vartheta$
for the period matrix $\tau$.
As a consequence, we have an isomorphism
$$
\Cal M \simeq V(\vartheta)/G(1-\rho) \subset \Cal D/G(1-\rho).
$$
 By using the embedding $\jmath_{\Cal D}$ and theta functions on 
$\mathbf{H}_2\times \mathbf{C}^2$
with characteristics,
we give the inverse
of the above isomorphism $\overline{per}:\Cal M\to V(\vartheta)/G(1-\rho)$.

\subsection{Notations}
\label{subsec:notation}
\begin{enumerate}
\item
Let $\bold Z[\rho]$ be a commutative ring generated by $\rho$
with a relation $\rho^2+\rho+1=0$. 
The conjugate map $\overline\ :\bold Z[\rho]\to \bold Z[\rho]$
is the ring homomorphism defined by
$\overline{\rho}=\rho^{-1}=-1-\rho$.
We set 
$$
\Re:\bold Z[\rho]\ni
x\mapsto 
\dfrac{x+\overline{x}}{2}
\in\dfrac{1}{2}\bold Z.
$$
\item
Let $\chi$ be a character of the cyclic group $\<\rho\>$ of order three.
For a $\bold C$-vector space $V_{\bold C}$ with an action of
$\<\rho\>$, $V_{\bold C}(\chi)$ denotes the $\chi$-part 
$$
V_{\bold C}(\chi)=\{v\in V_{\bold C}\mid \rho(v)=\chi(\rho)v\}.
$$
of $V_{\bold C}$.
For the conjugate character 
$\overline{\chi}$,
$V_{\mathbf{C}}(\overline{\chi})$ denotes the $\overline{\chi}$-part of 
$V_{\mathbf{C}}$.
\item
For a module $M$
with an action of $\rho$,
$M^{\rho}$ and $M_{\rho}$ denote
the $\rho$-invariant part and
the $\rho$-coinvariant part $M_{\rho}=M/M^{\rho}$ of $M$, respectively.
Then $M_{\rho}$ becomes a $\bold Z[\rho]$-module. 
\item
For a topological space $X$ (resp. a pair of topological spaces $X\supset Y$), 
the $i$-th singular cohomology and homology (resp. relative homology)
with integral coefficients are denoted by $H^i(X)$ and $H_i(X)$ (resp. $H_i(X,Y)$).
The $i$-th de Rham cohomology (resp. de Rham cohomology with compact support)
of an algebraic variety
is denoted by $H_{dR}^i(X)$ (resp. $H_{dR,c}^i(X)$). 
It is a $\bold C$-vector space.
\item
In this paper, a lattice means
a finitely generated free $\bold Z$-module with a symmetric bilinear
form over
$\bold Q$.
For a lattice $(L,\<\ , \ \>)$, $L(m)$ denotes a lattice $(L, m\<\ ,\ \>)$
i.e. the module $L$ with the symmetric bilinear form $m\<\  ,\ \>$.

\end{enumerate}
\subsection{Acknowledgment}
The study of period integrals of open K3 surfaces is
motivated by the previous work \cite{MSTY} on
reducible hypergeometric equation. The authors express their gratitude to
T. Sasaki and M. Yoshida for discussions with them.

\section{Triple covering of $\bold P^2$ branching along special configuration of $6$ lines}
\label{sec:conf of six lines loc system}
\subsection{Moduli spaces of special configurations}
\label{sec:moduli special config}

In this subsection, we define a triple covering $X$ of $\bold P^2$
branching along a special configuration of 6 lines.

Two numbered sets of $6$
lines $(\ell_1,\cdots, \ell_6)$ and 
$(\ell_1',\cdots, \ell_6')$ in $\bold P^2$
are projectively equivalent, if and only if there exists an element $g$ in 
$PGL_3(\bold C)$ such that $\ell_i=g(\ell_i')$ for $i=1, \dots, 6$.
\begin{definition}
A numbered set of $6$ lines $\vec\ell=(\ell_1, \dots, \ell_6)$ in $\bold P^2$
is called a {\it special configuration},
if it satisfies the following conditions:
\begin{enumerate}
\item
the intersection $\ell_i\cap \ell_j\cap \ell_k$ is a point,
if $(i,j,k) = (1,3,6), (2,4,6)$,
\item
$\ell_i\cap \ell_j\cap \ell_k= \emptyset$,
if $1\leq i<j<k\leq 6$, $(i,j,k) \neq (1,3,6),(2,4,6)$.
\end{enumerate}
The set of projective equivalence classes of
special configurations is denoted by
$\Cal M$.
\end{definition}

We can choose an inhomogeneous coordinates $(p,q)$ of $\bold P^2$ 
so that $\ell_6$ is the line at infinity.
We use the same notation $\ell_i$ for the equation of the line $\ell_i$.
We choose an inhomogeneous coordinate $(p,q)$ so that the defining equations
are given by
\begin{align*}
& \ell_1=p,\quad \ell_2=q,\quad 
\ell_3=1-p,\quad \ell_4=1-q,
 \\
&\ell_5=1-x_1p-x_2q.
\end{align*}
Under the above normalization, we have the following identification:
$$
\Cal M=\{(x_1,x_2)\in \bold C^2\mid
x_1,x_2 \neq 0,1, \ x_1+x_2\neq 1
\}.
$$
For an element $\vec\ell\in \Cal M$, we define a cyclic triple covering 
$$
\pi:X \to \bold P^2
$$
branching along the $6$ lines $\ell_1, \dots, \ell_6$ by
\begin{align}
\label{eq:triple-surface}
&X=X_{\vec\ell}: z^3=p^{-2}q^{-2}(1-p)^{-2}(1-q)^{-2}
(1-x_1 p-x_2q)^{-2}. 
\end{align}
The ramification divisor of $\pi$ is given by 
$
D=\cup_{i=1}^6\pi^{-1}(\ell_i).
$

\subsection{K3 surfaces obtained by triple coverings and Picard lattices}
Let $\overset{\to}\ell$ be a configuration in $\Cal M$
and set $p_1=\ell_1\cap \ell_3\cap \ell_6$, 
$p_2=\ell_2\cap \ell_4\cap \ell_6$.
The blowing up of $\bold P^2$ with centers $p_1,p_2$
is denoted by $\widehat{\bold P^2}$ (see Figure \ref{fig:Birat-trans}). 
The exceptional divisors
over $p_1$ and $p_2$ are denoted by $e_1$ and $e_2$.
The pencil with the axis $p_1$ (resp. $p_2$) defines a morphism
$\overline{\epsilon_1}:\widehat{\bold P^2}\to \bold P^1$ which is expressed as
$$
\overline{\epsilon_1}:(p,q)\mapsto p
\quad(\text{resp. }\overline{\epsilon_2}:(p,q)\mapsto q)
$$
by the inhomogeneous coordinates $(p,q)$ of $\bold P^2$.
The morphism 
$$
\widehat{\bold P^2}
\xrightarrow{(\overline{\epsilon_1},
\overline{\epsilon_2})} \bold P^1\times \bold P^1
$$
is identified with the contraction of the proper transform
of the line $\ell_6$ in $\widehat{\bold P^2}$.
The images of $e_1,e_2$ and $\ell_1,\dots, \ell_5$
under the contraction
$\widehat{\bold P^2}\to \bold P^1\times \bold P^1$ 
are also denoted by $e_1,e_2$ and $\ell_1,\dots, \ell_5$.

\begin{figure}[htb]
\includegraphics[width=12cm]{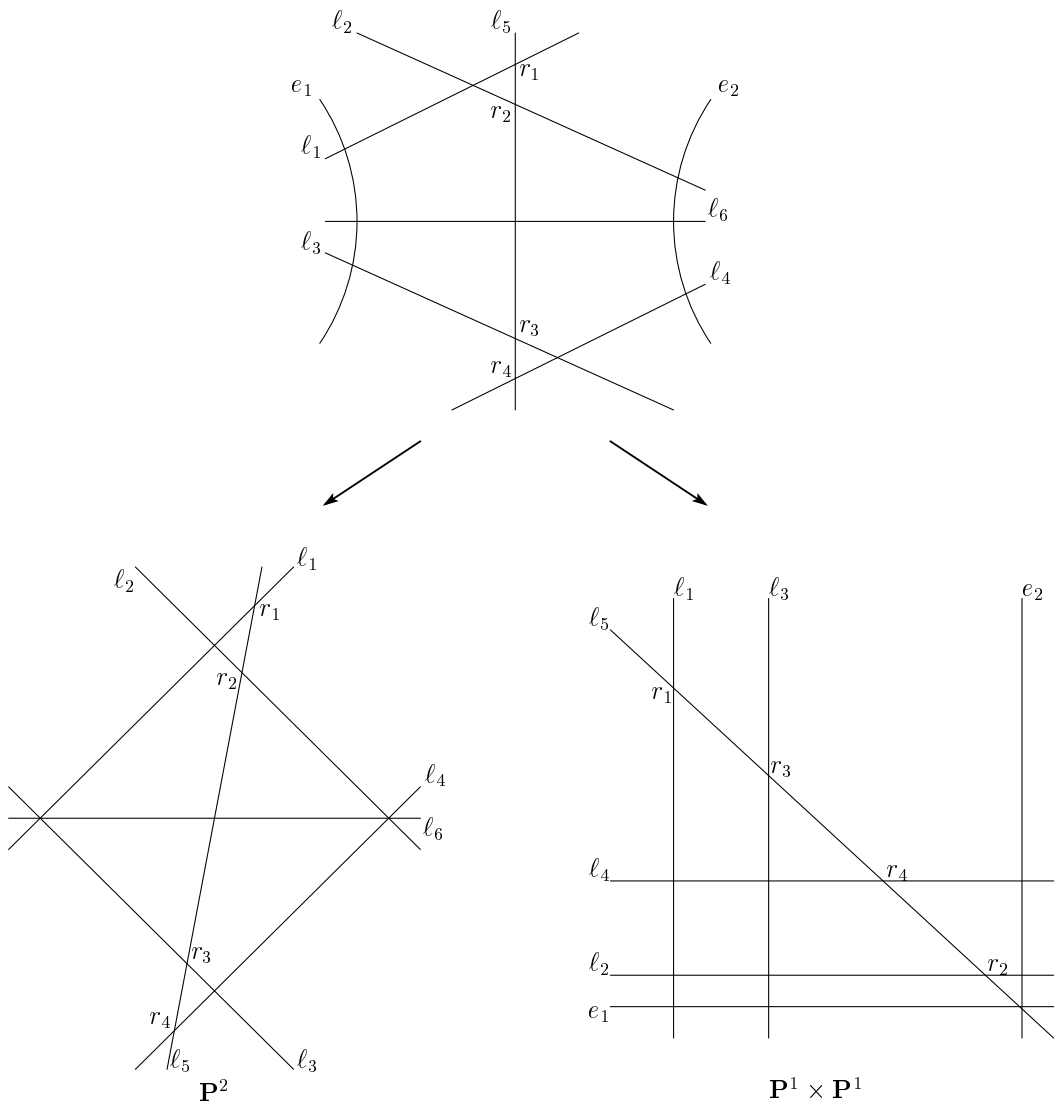}
\caption{Birational transform}
\label{fig:Birat-trans}
\end{figure}

Let $\pi':X' \to \bold P^1\times \bold P^1$ 
be the normalization of $X$
over 
$\bold P^1\times\bold P^1$.
The branch locus of 
$\pi'$ is
the normal crossing divisor
$\ell_1\cup\ell_2\cup\ell_3\cup\ell_4\cup\ell_5$.
For $i=1,2$, the composite 
$$
\epsilon_i=\overline{\epsilon_i}\circ\pi:X'\to \bold P^1
$$
is an elliptic fibration.
We set
$$
{\pi'}^{-1}(e_i)=E_i,\  (i=1,2),\quad
L_i={\pi'}^{-1}(\ell_i) \  (i=1,\dots, 5),
$$
and
$R'=\cup_{i=1}^5 L_i, D'=R'\cup E_1\cup E_2$.
Under the map $\epsilon_1$ (resp. $\epsilon_2$), the images of the curves 
$L_1,L_3,E_2$ (resp. $L_2,L_4,E_1$)
are points, which are denoted by $q_1,q_3$ and $\sigma_2$
(resp. $q_2,q_4$ and $\sigma_1$).
Then we have the following diagram:
$$
\begin{matrix}
&X' & \xrightarrow{\epsilon_2} & \bold P^1 &\ni q_2,q_4,\sigma_1 \\
 &
{\text{\scriptsize$\epsilon_1$}}
 \downarrow \phantom{\epsilon_1}& \\
q_1,q_3,\sigma_2 \in & \bold P^1,
\end{matrix}
$$
and an isomorphism
$
X'-D' \xrightarrow{\simeq} X-D.
$

We show that the minimal resolution 
$\widetilde X$ of $X'$ is a K3 surface.
The $8$ points in $X'$ over the intersection points
in $\ell_1\cup\ell_2\cup\ell_3\cup\ell_4\cup\ell_5$
are $A_2$-singular points on $X'$, and by the ramification formula,
$\widetilde{X}$ is a K3 surface.
For any $\vec\ell \in \Cal M$,
the Picard group contains the classes of $16$ exceptional divisors
arising from $8A_2$ singularities
and the classes of $E_1,E_2$.
Therefore the Picard number of $\widetilde {X}$ is at least $18$.
The dimension of $H^2(X',\bold Q)$ is equal to $22-16=6$.

\begin{definition}[Generic Neron-Severi lattice and generic 
transcendental lattice]
\label{def:generic transcendental space}
Let $\widetilde{X}$ be the minimal resolution of $X'$.
\begin{enumerate}
\item
 We define the generic Neron-Severi lattice $S_X$
of $\widetilde{X}$
 as the primitive closure 
 in
$H^2(\widetilde{X})$
of the submodule generated by 
\begin{enumerate}
 \item 
the classes of
$E_1$, $E_2$;
\item
the classes of $16$ exceptional divisors arising from
the $8A_2$-singular points.
\end{enumerate}
The rank of $S_X$ is $18$.
\item
 We define the generic transcendental lattice $T_X$
of $\widetilde{X}$ as
 the orthogonal complement of $S_X$ in
$H^2(\widetilde{X})$. The rank of $T_X$ is $4$.
\item
Symmetric bilinear forms on $S_X$ and $T_X$
obtained by the intersection 
form on $H^2(\widetilde{X})$
are denoted by $\<\ ,\ \>_S$ and $\<\ ,\ \>_T$, respectively.
Then $S_X$, $T_X$ become primitive lattices in $H^2(\widetilde{X})$.
\end{enumerate}
\end{definition}

\subsection{The elliptic fibration $\epsilon_1$ and the discriminants of lattices}
The elliptic fibration $\widetilde{X} \to \bold P^1$
obtained by $\epsilon_1$ is also denoted by $\epsilon_1$.
We compute the discriminant of $S_X$ using the fibration $\epsilon_1$.
For $i=1, \dots, 5$, the proper transform of $L_i\subset X'$ 
in $\widetilde{X}$ is also denoted by $L_i$.
We set $\widetilde{R}=\cup_{i=1}^5L_i \cup E_1\cup E_2$.
 \begin{proposition}
\begin{enumerate}
 \item 
The discriminant $\delta(S_X)$ of the generic Neron-Severi lattice 
is equal to $9$. As a consequence, the
discriminant $\delta(T_X)$ of the generic transcendental lattice
is equal to $9$.
\item
The image of 
$ 
H_c^2(\widetilde{X}-\widetilde{R})\to 
H^2(\widetilde{X})
$
is identified with $T_X$.
By the Poincar\'e duality, the image of
$H_2(\widetilde{X})\to H_2(\widetilde{X},\widetilde{R})$
is identified with the dual module $T_X^*$ of $T_X$.

\end{enumerate}
 \end{proposition}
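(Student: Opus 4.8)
The plan is to read off the discriminant of $S_X$ from the elliptic fibration $\epsilon_1$ and then transport the answer to $T_X$ by unimodularity of the K3 lattice. First I would determine the singular fibres of $\epsilon_1\colon \widetilde X\to \mathbf{P}^1$. The generic fibre is the triple cover of the $q$-line branched over $q=0,1,(1-x_1p)/x_2$, an equianharmonic elliptic curve, so every additive fibre has $j=0$. Degenerations occur exactly where two of these three branch points collide or where the fibre coincides with a branch line: over $p=0$ and $p=1$ the fibre is the branch curve $L_1$ (resp.\ $L_3$) sitting as the central node of a $\widetilde{E}_6$-configuration whose three length-two arms are the resolutions of the three $A_2$-points lying on that fibre (namely $(0,0),(0,1),(0,1/x_2)$, resp.\ $(1,0),(1,1),(1,(1-x_1)/x_2)$), hence of Kodaira type $IV^{*}$; over $p=1/x_1$ and $p=(1-x_2)/x_1$ a single pair of branch points collides, producing one $A_2$-point and a fibre of type $IV$; and over $p=\infty$ the fibre is the smooth curve $E_2$. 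As a check, $2\cdot e(IV^{*})+2\cdot e(IV)=2\cdot 8+2\cdot 4=24=e(\widetilde X)$, and the Shioda--Tate count $2+(2\cdot 6+2\cdot 2)=18$ matches $\operatorname{rank}S_X$, so the Mordell--Weil rank is $0$.

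Next I would apply the Shioda--Tate formula. The trivial lattice $U\oplus E_6^{\oplus 2}\oplus A_2^{\oplus 2}$ (a hyperbolic plane spanned by a fibre and a section, together with the non-identity fibre components) has rank $18=\operatorname{rank}S_X$ and discriminant $-3^4=-81$, and it sits inside $S_X$ with finite index $|\mathrm{MW}_{\mathrm{tor}}|$, so $\delta(S_X)=81/|\mathrm{MW}_{\mathrm{tor}}|^{2}$. The torsion subgroup injects into the product $\prod_v G_v=(\mathbf{Z}/3)^{4}$ of the component groups of the four additive fibres, so $|\mathrm{MW}_{\mathrm{tor}}|\in\{1,3,9\}$. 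The three branch lines $\ell_2,\ell_4,\ell_5$ furnish sections $L_2,L_4,L_5$ of $\epsilon_1$ (their preimages are $1$-to-$1$ over the transverse branch lines); taking $L_2$ as the zero section, I would compute which multiplicity-one components of the $IV$- and $IV^{*}$-fibres the sections $L_4,L_5$ meet and show that the resulting image in $(\mathbf{Z}/3)^{4}$ is exactly $\mathbf{Z}/3$. This gives $\delta(S_X)=81/9=9$. The clause ``as a consequence'' is then immediate: $H^2(\widetilde X,\mathbf{Z})$ is unimodular and $T_X=S_X^{\perp}$ is the orthogonal complement of the primitive sublattice $S_X$, so the discriminant forms satisfy $A_{S_X}\cong A_{T_X}$ and therefore $|\delta(T_X)|=|\delta(S_X)|=9$.

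For part (2) I would work with the exact sequences attached to the closed divisor $\widetilde R\subset\widetilde X$, whose irreducible components (the $L_i$, $E_1$, $E_2$ and the sixteen exceptional curves of the resolution $\widetilde X\to X'$) together generate $S_X$. On cohomology the sequence $H^2_c(\widetilde X-\widetilde R)\to H^2(\widetilde X)\xrightarrow{\,r\,}H^2(\widetilde R)$ identifies the image of the first map with $\ker r$. Since $r$ sends $\alpha$ to the tuple of intersection numbers $(\alpha\cdot[R_c])_c$, its kernel is the orthogonal complement $\langle[R_c]\rangle^{\perp}=S_X^{\perp}=T_X$, giving the first identification. For the homological statement I would invoke Poincar\'e duality on $\widetilde X$: the map $H_2(\widetilde X)\to H_2(\widetilde X,\widetilde R)$ is dual to the cohomological map just treated, its kernel is the image $\langle[R_c]\rangle=S_X$ of $H_2(\widetilde R)$, and because $H^2(\widetilde X)$ is unimodular the quotient $H_2(\widetilde X)/S_X$ is canonically $T_X^{*}$; hence the image is $T_X^{*}$.

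The main obstacle is the index computation in part (1): determining the Kodaira types over $p=0,1$ requires a genuine local analysis of how the cyclic triple cover degenerates along a branch fibre passing through three $A_2$-points, and pinning down $\mathrm{MW}_{\mathrm{tor}}$ exactly as $\mathbf{Z}/3$ --- equivalently, showing that the listed curves generate $S_X$ and not merely a finite-index sublattice, which is precisely what the homological half of part (2) also needs --- is the delicate point, rather than the formal lattice manipulations that surround it.
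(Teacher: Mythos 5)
Your argument has the same architecture as the paper's: for (1) both compute $\delta(S_X)$ from the fibration $\epsilon_1$ via Shioda--Tate, using the same singular-fibre configuration $2\,IV^{*}+2\,IV$ (trivial lattice $U\oplus E_6^{\oplus2}\oplus A_2^{\oplus2}$ of discriminant $81$) and dividing by $|E(K)_{\mathrm{tor}}|^{2}=9$, then transfer to $T_X$ by unimodularity of the K3 lattice; for (2) both use the exact sequence $H^2_c(\widetilde X-\widetilde R)\to H^2(\widetilde X)\to H^2(\widetilde R)$ with the restriction map given by intersection with components, followed by Poincar\'e duality. (Your reading of $\widetilde R$ as containing the sixteen exceptional curves is the one that makes the kernel equal to $T_X$; with only the seven proper transforms the orthogonal complement would be far too large, so your explicit remark that the components must span $S_X\otimes\mathbf{Q}$ is exactly the right point to insist on.) The one genuine divergence is how $E(K)_{\mathrm{tor}}\simeq\mathbf{Z}/3$ is obtained: the paper uses the $\mathbf{Z}[\omega]$-multiplication on the generic fibre, so that the torsion is a $\mathbf{Z}[\omega]$-submodule of $E[3]$ and the possibilities collapse to $0$, $\mathbf{Z}/3$ or $E[3]$, whereas you propose to read it off from the visible sections $L_2,L_4,L_5$ and the components they meet.

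On that step your plan, as stated, only delivers a lower bound. Computing that the images of $L_4,L_5$ generate a copy of $\mathbf{Z}/3$ inside $\prod_v G_v=(\mathbf{Z}/3)^4$ shows $|E(K)_{\mathrm{tor}}|\geq 3$, but it does not exclude further torsion sections, and $E(K)_{\mathrm{tor}}\simeq(\mathbf{Z}/3)^2$ would give $\delta(S_X)=1$. The missing upper bound can be supplied either by the paper's CM argument or purely from the fibre data: a torsion section $P$ has height $0$, so $4+2(P\cdot O)=\sum_v \mathrm{contr}_v(P)\leq 2\cdot\tfrac{4}{3}+2\cdot\tfrac{2}{3}=4$, forcing every nonzero torsion section to meet a non-identity component of all four additive fibres, i.e.\ to have full support in $(\mathbf{Z}/3)^4$; since every two-dimensional $\mathbf{F}_3$-subspace of $\mathbf{F}_3^4$ contains a nonzero vector with a vanishing coordinate, $E(K)_{\mathrm{tor}}$ cannot be $(\mathbf{Z}/3)^2$ and hence equals $\mathbf{Z}/3$. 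With that supplement your route closes and is a legitimate, more explicitly geometric alternative to the paper's appeal to complex multiplication.
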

\begin{proof}
(1) 
The generic fiber of 
the elliptic fibration 
$\epsilon_1:\widetilde{X}\to \bold P^1$
is an elliptic curve $E$ over the rational function field $K$
 of $\bold P^1$.
The rank of Mordell-Weil group generated by 
$S_X$ is equal to zero. Using the fact that 
the generic fiber of $\epsilon_1$
has a $\bold Z[\omega]$-multiplication, we can show
$$
E(K)_{tor}\simeq (\bold Z/3\bold Z).
$$

 Let $V$ be the sublattice of $S_X$
 generated by components of singular fibers, a generic fiber and
 the zero section. 
Since the type of singular fibers are
 $2A_2+2E_6$, $V^*/V$ is isomorphic to $(\bold Z/3\bold Z)^4$.
By the formula
 $E(K)_{tor}\simeq S_X/V $ due to Shioda (\cite{S}),
we have $S_X^*/S_X\simeq (\bold Z/3\bold Z)^2$.
Therefore $\delta(S_X)=9$ and as a consequence, we have $\delta(T_X)=9$.

(2) The first statement follows from the exact sequence for 
cohomologies
$$
H^2_c(\widetilde{X}-\widetilde{R})\to
H^2(\widetilde{X})\xrightarrow{f}
H^2(\widetilde{R}),
$$
where $f$ is obtained by the intersections with irreducible components
of $\widetilde{R}$.
The second statement follows from the Poincar\'e duality and the first statement.
\end{proof}

\subsection{A trivialization by the monodromy covering}
\label{subsec:a lifting of change var}

Let $t$ be an element in $\bold C-\{0,1\}$,
and $C$ and $E$ be triple coverings of $\bold P^1$
defined by 
\begin{align}
\label{def eq of ellipic curve with w-action}
   &C:w^3=u(1-u)^2(1-tu)^{-1}, \\
\nonumber
& E:{w'}^3=r^{-2}(1-r)^{-2}.
\end{align}
In this subsection, we define a rational map
$$
\lambda:E\times C \dashrightarrow X'.
$$ 

Before defining the rational map $\lambda$, we define two birational
maps $f$ and $f'$ from $\bold C^2$ to itself.
\begin{enumerate}
\item
We define
a birational map $f$ by 
\begin{equation} 
\label{eq:changevar1}
f:(p,r)\mapsto (p,q)=(p,\dfrac{(1-x_1p)r}{rx_2-x_2+1-x_1p}). 
\end{equation}
Then the rational map $f$ preserves the structure of the fibration
arising from the first projection.
The proper transform of $\ell_2,\ell_4$ and $\ell_5$ are 
defined by $r=0,1$
and $\infty$, respectively.

\item
We define the birational map $f'$
by
\begin{equation}
 \label{u as a function of r}
  f':(u,r)\mapsto (p,r)=
(\dfrac{-u}{1-x_1-u},r).
 \end{equation}
\end{enumerate}

Then the correspondence of the variables $p$ and $u$
are given by the following table,

$$
\begin{array}{|c||c|c|c|c|c|}
\hline
u & 0  &1-x_1 & 1 & \dfrac{1}{t}&\infty\\[3mm]
\hline
p & 0  & \infty &\dfrac{1}{x_1}& \dfrac{1-x_2}{x_1}& 1\\[3mm]
\hline
\text{fiber of }\epsilon_1 & L_1  & E_2 & & 
& L_3
\\[1mm]
\hline
\end{array}
$$
where
\begin{equation}
\label{definition of t}
t=\dfrac{1-x_1-x_2}{(1-x_1)(1-x_2)}
\end{equation}
is the cross-ratio of the four points $r_1,r_2,r_3,r_4$ on the
line $\ell_5$ (see Figure \ref{fig:Birat-trans}).
We can easily see that 
via the birational map $f\circ f'$,
the open set 
$$
(\bold P^1-\{0,1,\infty\})\times (\bold P^1-\{0,1,\infty,\dfrac{1}{t}\})
\subset \bold P^1\times \bold P^1=\{(r,u)\}
$$
is mapped
isomorphically onto the open set 
$$
\bold U=
\{(p,q)\in \bold P^1\times \bold P^1\mid
(p,q)\notin (\ell_1\cup \ell_2\cup \ell_3\cup \ell_4\cup \ell_5), p\neq
\dfrac{1}{x_1},\dfrac{1-x_2}{x_1}\}.
$$
We set 
\begin{align*}
&R_C=\{(w,u)\mid u= 0,1,\infty,\dfrac{1}{t}\}\subset C, \quad C^0=C-R_C,\\
 &R_{E}=\{(w',r)\mid r= 0,1,\infty\}\subset
 E,\quad E^0=E-R_{E},\\
&{X'}^0={\pi'}^{-1}(\bold U).
\end{align*}
A covering transformation $\rho$
(resp. $\rho'$)
of $C$ (resp. $E$)
over $\bold P^1$ is defined
by $w\to \omega w$ (resp. $w'\mapsto \omega w'$).
In the next proposition, we choose a branch of
$(1-x_1)^{\frac{1}{3}}(1-x_2)^{\frac{1}{3}}$.
\begin{proposition}
\label{trivialization for monodromy covering}
\begin{enumerate}
\item
We have a morphism
$
\lambda :E^0\times C^0 \to {X'}^0
$ 
defined by 
\begin{equation}
\label{quotient by order 3 action}
 \begin{array}{ccc}
\lambda:E^0\times C^0 &\to &{X'}^0 \\
((w',r),(w,u)) &\mapsto &(z,p,q) \\[2mm]
&&=( 
  \dfrac{(1-x_1)^{\frac{2}{3}}(1-x_2)^{\frac{2}{3}}
  (1-r)(1-tu)ww'}{u(1-u)(1-p)(1-q)(1-x_1p-x_2q)},p,q).
\end{array}
\end{equation}
Here $p=p(r,u),q=q(r,u)$ are rational functions 
on $r$ and $u$ obtained by the relations (\ref{eq:changevar1}) and
(\ref{u as a function of r}).
\item
We define an action $\hat\rho$ on $E\times C$ by
$$
\hat\rho={\rho'}^{-1}\times \rho.
$$
The morphism $\lambda$ factors through the quotient by the action of 
$\hat\rho$
$$
E^0\times C^0 \to (E^0\times C^0)/\<\hat\rho\>,
$$ 
and induces an isomorphism
\begin{equation}
\label{quot isom from product}
(E^0\times C^0)/\<\hat\rho\> \to {X'}^0.
\end{equation}
\end{enumerate}
\end{proposition}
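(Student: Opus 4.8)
The plan is to treat the two assertions separately, reducing (1) to a single rational identity and (2) to an equivariance statement together with a degree count.

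For (1), the isomorphism induced by $f\circ f'$ already guarantees that for $((w',r),(w,u))\in E^0\times C^0$ the image $(p,q)=(p(r,u),q(r,u))$ lies in $\bold U$, and on that locus every factor occurring in the numerator and denominator of the $z$-coordinate is finite and nonzero, so $\lambda$ is a well-defined morphism into ${\pi'}^{-1}(\bold U)={X'}^0$ once we know $(z,p,q)$ lies on $X'$. Over $\bold U$ the cover $\pi'$ is given by $z^3=p^{-2}q^{-2}(1-p)^{-2}(1-q)^{-2}(1-x_1p-x_2q)^{-2}$, so the only thing to check is that the displayed $z$ satisfies this equation. The plan is to cube the formula for $z$, substitute $w^3=u(1-u)^2(1-tu)^{-1}$ and ${w'}^3=r^{-2}(1-r)^{-2}$, and cancel the common factor $(1-p)^2(1-q)^2(1-x_1p-x_2q)^2$; the desired equality then reduces to
$$(1-x_1)^2(1-x_2)^2(1-r)(1-tu)^2r^{-2}p^2q^2=u^2(1-u)(1-p)(1-q)(1-x_1p-x_2q).$$
To verify this I would rewrite every factor through $f$ and $f'$. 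From $f'$ one finds $u=-p(1-x_1)/(1-p)$, hence $1-u=(1-x_1p)/(1-p)$ and $1-tu=(1-x_2-x_1p)/[(1-x_2)(1-p)]$; from $f$, writing $d=x_2(r-1)+(1-x_1p)$, one finds $q=(1-x_1p)r/d$, $1-q=(1-r)(1-x_2-x_1p)/d$ and $1-x_1p-x_2q=(1-x_1p)(1-x_2-x_1p)/d$. Substituting these, both sides of the displayed identity collapse to $(1-x_1)^2p^2(1-r)(1-x_2-x_1p)^2(1-x_1p)^2/[(1-p)^2d^2]$, which settles (1).

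For (2), I would first observe that $\hat\rho={\rho'}^{-1}\times\rho$ sends $(w',w)$ to $(\omega^{-1}w',\omega w)$, so it fixes the product $ww'$ and leaves $(p,q)$ unchanged; hence the $z$-coordinate is $\hat\rho$-invariant and $\lambda$ descends to a morphism $\bar\lambda\colon(E^0\times C^0)/\<\hat\rho\>\to{X'}^0$. To see that $\bar\lambda$ is an isomorphism I would argue fibrewise over $\bold U$. The projection $E^0\times C^0\to\bold U$ is the restriction of the Galois cover with group $\<\rho'\>\times\<\rho\>$ of order $9$; since $w,w'\neq0,\infty$ on $E^0\times C^0$, the factor $\rho$ already acts freely on $C^0$, so $\hat\rho$ acts freely and the quotient is a degree $3$ cover of $\bold U$, while $\pi'\colon{X'}^0\to\bold U$ is the cyclic triple cover, \'etale over $\bold U$ because the branch lines are excluded. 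Over a point of $\bold U$ the $9$-point fibre of $E^0\times C^0$ is a single $\<\rho'\>\times\<\rho\>$-orbit, and two of its points have the same image under $\lambda$ exactly when they share the same value of $ww'$; the stabiliser of $ww'$ inside the order $9$ group is precisely $\<\hat\rho\>$. Thus $\lambda$ carries the $9$ points onto the $3$ sheets of $\pi'$ with fibres equal to the $\hat\rho$-orbits, so $\bar\lambda$ is bijective on every fibre, hence a finite bijective morphism over $\bold U$; the relation $\lambda\circ(\mathrm{id}\times\rho)=\rho_{X'}\circ\lambda$ records that the residual $\<\rho\>$-action is identified with the deck transformation $z\mapsto\omega z$ of $\pi'$. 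Since ${X'}^0$ is smooth (the $A_2$-points of $X'$ all lie over the excluded lines), a finite bijective morphism onto it is an isomorphism by Zariski's main theorem.

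I expect the main obstacle to be the bookkeeping in the rational identity of (1): it is exactly there that the exponents built into $C$, $E$ and $X'$, the constant $(1-x_1)^{2/3}(1-x_2)^{2/3}$, and the cross-ratio $t$ must all match. Once that identity is in hand, the factoring and the isomorphism in (2) are essentially formal, resting only on the freeness of $\hat\rho$ and the normality of ${X'}^0$.
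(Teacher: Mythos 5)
Your proof is correct and follows essentially the same route as the paper, whose own proof merely notes that the actions are fixed-point free, that $ww'$ is $\hat\rho$-invariant, and that the rest "can be checked directly" from the formula. You have simply carried out that direct check in full (the rational identity in (1) does collapse to the common expression you display, and the fibrewise degree count in (2) is sound), so no further comment is needed.
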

\begin{proof}
The actions of $\rho$ and $\rho'$ are
fixed point free on 
$C^0$ and $E^0$.
Since the product $ww'$ is invariant under the action of
$\hat\rho$,
we have the statement (2).
The above proposition can be checked directly
from the morphism
(\ref{quotient by order 3 action}).
\end{proof}
\begin{definition}[Monodromy covering, monodromy curve]
The covering $C\to \bold P^1$ is the smallest unramified covering of 
$\bold P^1-\{0,1,\infty,\dfrac{1}{t}\}$
trivializing the elliptic fibration $\epsilon_{1}:\widetilde{X} \to \bold P^1$.
This map is called the monodromy covering and the curve $C$ is called the monodromy curve
 of the elliptic fibration
$\epsilon_1:\widetilde{X}\to \bold P^1$.
\end{definition}

\section{Period integrals of the monodromy curve}
\label{sec:period integral of the mon curve}
In this section, we study period integrals of the
monodromy curve $C$ defined in
(\ref{def eq of ellipic curve with w-action}).
Throughout this section, we assume that the parameter
$t$ in (\ref{definition of t})
is a real number and satisfies
$0<t<1$.
\subsection{The homology of the monodromy curve}
\label{relative homology explicit form}
In this subsection, we study the structure of $H_1(C,\bold Z)$
as $\bold Z[\rho]$-module and its intersection form.
We define a symplectic basis $\alpha_1,\alpha_2,\beta_1$ and $\beta_2$
in the homology of $C$ as follows.
We set $\beta_1,\beta_2$ as in the Figure \ref{fig:cycle}.
\begin{figure}[hbt]
\begin{center}
\includegraphics[width=7cm]{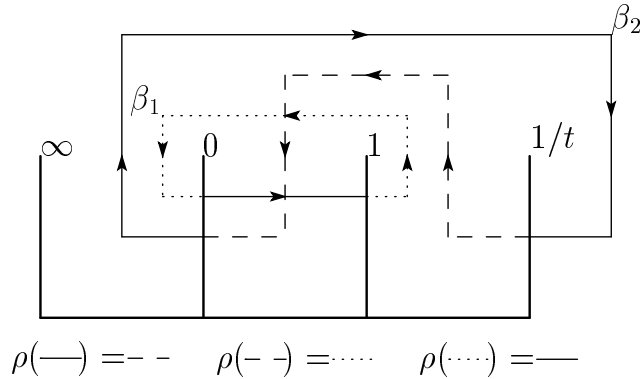}
\end{center}
\caption{Cycles $\beta_1$ and $\beta_2$}
\label{fig:cycle} 
\end{figure}
The first sheet is defined so that $w$ takes
a value in $\bold R_+$ for $0<u<1$.
Paths in the first sheet are written by solid lines.
Then the cycles 
\begin{equation}
\label{symplectic and action of rho}
\alpha_1=\rho(\beta_2),\ \alpha_2=\rho(\beta_1),
\ \beta_1, \ \beta_2
\end{equation}
form a symplectic basis of the first homology 
$H_1(C,\mathbf{Z})$ of $C$; i.e, they satisfy 
\begin{equation*}
\alpha_i\cdot \alpha_j=\beta_i\cdot \beta_j=0,\quad \beta_i\cdot \alpha_j=\delta_{ij}
\end{equation*}
for $1\le i,j \le 2$, where $\delta_{ij}$ is the Kronecker symbol.
We set 
\begin{equation}
\label{equ:definition of matrix U}
I=
\left(
\begin{matrix}
1 & 0 \\
0 & 1
\end{matrix}
\right),\quad
U=
\left(
\begin{matrix}
0 & 1 \\
1 & 0
\end{matrix}
\right).
\end{equation}
The covering transformation $\rho$ induces a linear transformation 
of $H_1(C,\mathbf{Z})$ expressed as  
$$\rho \begin{pmatrix}\alpha\\ \beta \end{pmatrix}
\mapsto 
R\begin{pmatrix}\alpha \\ \beta\end{pmatrix},\quad 
\text{ where }
R=\begin{pmatrix}-I&-U \\U & O \end{pmatrix},
\alpha=\begin{pmatrix}
\alpha_1 \\ \alpha_2
\end{pmatrix},
\beta=\begin{pmatrix}
\beta_1 \\ \beta_2
\end{pmatrix}.
$$

Let $E$ be the elliptic curve defined in (\ref{def eq of ellipic curve with w-action}).
We define a chain $\delta_E$ with the positive direction in $E$ 
by $0\leq r \leq 1, w'\in \bold R_+$ and set
$$
\Delta_E=(1-\rho')\delta_E.
$$ 
Then $\Delta_E$ becomes a cycle in $E$, and 
$H_1(E,\bold Z)$ is a free $\bold Z[\rho']$-module
of rank one generated by the homology class of $\Delta_E$.
Using the Poincar\'e duality, we have the following proposition.

\begin{proposition}
\label{action of rho on curve total dim}
We have $H^1(C,\bold Z)=\bold Z[\rho]\beta_1\oplus \bold Z[\rho]\beta_2$
and $H^1(E,\bold Z)=\bold Z[\rho']\Delta_E$.
The intersection form is written as
\begin{align}
\label{rho invariant symplectic form}
\<m,n\>_C&=\dfrac{2}{3}\Re((\rho-\overline{\rho})\ mU^t\overline{n}), 
\\
\nonumber
\<m',n'\>_E&=\dfrac{2}{3}\Re((\rho'-\overline{\rho'})\ m'\overline{n'}),
\end{align}
where the identification $m=(m_1,m_2)$ (resp. $m'=(m'_1)$) is given by
$m=m_1\beta_1+m_2\beta_2$ (resp. $m'=m_1'\Delta_E$)
with $m_1, m_2\in \bold Z[\rho]$
(resp. $m'_1\in\bold Z[\rho']$).
\end{proposition}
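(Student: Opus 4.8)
I would separate the module statement from the computation of the two pairings. For the module structure, observe that the $\bold Z[\rho]$-submodule of $H_1(C,\bold Z)$ generated by $\beta_1,\beta_2$ contains $\alpha_2=\rho(\beta_1)$ and $\alpha_1=\rho(\beta_2)$ by (\ref{symplectic and action of rho}), hence contains the full symplectic basis and equals $H_1(C,\bold Z)$. Since $\{\beta_1,\rho\beta_1,\beta_2,\rho\beta_2\}=\{\beta_1,\alpha_2,\beta_2,\alpha_1\}$ is merely a reordering of the symplectic basis, it is a $\bold Z$-basis, which forces $\beta_1,\beta_2$ to be $\bold Z[\rho]$-linearly independent, so that $H_1(C,\bold Z)=\bold Z[\rho]\beta_1\oplus\bold Z[\rho]\beta_2$. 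Because the holomorphic automorphism $\rho$ preserves the orientation, Poincaré duality $H^1(C,\bold Z)\simeq H_1(C,\bold Z)$ is $\rho$-equivariant and carries the cup product to a fixed sign of the intersection form, so the decomposition passes to $H^1(C,\bold Z)$. For $E$ the analogous statement $H_1(E,\bold Z)=\bold Z[\rho']\Delta_E$ is already recorded before the proposition, and transports to $H^1(E,\bold Z)$ in the same way.

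For the pairing I would first isolate the relevant algebra. Since $\rho-\overline\rho=2\rho+1=\sqrt{-3}$ is purely imaginary, one checks directly that $\tfrac23\Re((\rho-\overline\rho)(c+d\rho))=-d$ for $c,d\in\bold Z$; in particular the right-hand side of (\ref{rho invariant symplectic form}) is integer valued and $\bold Z$-bilinear in $(m,n)$. Writing $h(m,n)=mU\,{}^t\overline n=m_2\overline{n_1}+m_1\overline{n_2}$, a short conjugation computation gives $h(n,m)=\overline{h(m,n)}$, so $h$ is Hermitian and $\tfrac23\Re((\rho-\overline\rho)h(m,n))$ is skew-symmetric, matching the skew symmetry of the intersection form. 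Therefore it suffices to verify that the proposed formula reproduces the intersection numbers of the $\bold Z$-basis $\{\beta_1,\beta_2,\alpha_1,\alpha_2\}$.

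The verification is then a direct calculation using $\rho\overline\rho=1$ and the rule $\tfrac23\Re((\rho-\overline\rho)(c+d\rho))=-d$. For instance $m=\beta_1=(1,0)$, $n=\alpha_1=(0,\rho)$ yields $h=\overline\rho=-1-\rho$ and the value $1=\delta_{11}$, while $m=\beta_1$, $n=\alpha_2=(\rho,0)$ yields $h=0$ and the value $0=\delta_{12}$; on the $\beta$-$\beta$ and $\alpha$-$\alpha$ blocks $h$ lies in $\bold Z$, so $\tfrac23\Re(\sqrt{-3}\,h)=0$, reproducing $\beta_i\cdot\beta_j=\alpha_i\cdot\alpha_j=0$. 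This matches the symplectic relations of (\ref{symplectic and action of rho}) exactly and proves the formula for $C$. The formula for $E$ is the rank-one case $h(m',n')=m'\overline{n'}$, and the only nonzero check is $\langle\Delta_E,\rho'\Delta_E\rangle_E=\tfrac23\Re((\rho'-\overline{\rho'})\overline{\rho'})=1$, the intersection number of $\Delta_E=(1-\rho')\delta_E$ with its $\rho'$-translate.

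The genuine content sits in the two topological intersection numbers that this bilinear-algebra argument takes as input: the symplectic numbers of $\{\alpha_i,\beta_j\}$ read off from Figure \ref{fig:cycle}, and the single number $\Delta_E\cdot\rho'\Delta_E$. Both are computed by lifting the base intervals to the three sheets of the triple covers $C\to\bold P^1$ and $E\to\bold P^1$ and tracking the permutation of the sheets at the branch points, and getting their signs right is precisely what pins down the normalization constant $\tfrac23$ and the sign of $\sqrt{-3}=\rho-\overline\rho$ in (\ref{rho invariant symplectic form}). This is where I expect the main care to be needed, everything else being forced by $\bold Z[\rho]$-bilinearity and the Hermitian property of $U$.
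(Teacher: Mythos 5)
Your proposal is correct and supplies exactly the argument the paper leaves implicit: the paper states this proposition with no proof beyond the phrase ``Using the Poincar\'e duality,'' taking the symplectic relations $\beta_i\cdot\alpha_j=\delta_{ij}$ and the action $\alpha_1=\rho(\beta_2)$, $\alpha_2=\rho(\beta_1)$ as read off from Figure \ref{fig:cycle}, together with $\Delta_E\cdot\rho'\Delta_E$ on $E$. Your reduction to checking the formula on the $\bold Z$-basis $\{\beta_1,\beta_2,\alpha_1,\alpha_2\}$ via $\bold Z$-bilinearity, the Hermitian property of $U$, and the identity $\tfrac{2}{3}\Re\bigl((\rho-\overline{\rho})(c+d\rho)\bigr)=-d$ is sound, and the individual evaluations all check out against (\ref{symplectic and action of rho}).
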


\subsection{Extra involution}
\label{sec:extra involution}
We define an extra involution on $C=C_t$ for $0<t<1$.
\begin{definition}[Extra involution]
\label{def:extra involution}
Let $(1-t)^{1/3}$ be the positive real cubic root of $(1-t)$.
We define an involution $\iota_C:C\to C$, called the extra involution
of $C$,
by setting
\begin{align*}
\iota_C^*(u)&=\frac{1-u}{1-tu}, \quad
\iota_C^*(w)=(1-t)^{1/3}\frac{u(1-u)}{w(1-tu)} .
\end{align*}
\end{definition}
This involution induces a transposition of 
branching points and
boundaries as
$$
0 \leftrightarrow 1,\quad \frac{1}{t}\leftrightarrow \infty,\quad
1-x_1 \leftrightarrow 1-x_2,
$$
on the $u$-coordinate. 
Note that the fiber of $\epsilon_1$ at $u=1-x_1$ is a smooth elliptic curve $E_2$ 
in $\widetilde{X}$.
We have a relation $\rho\circ \iota_C=\iota_C\circ\rho^2$.

Let $\psi_1$ be a holomorphic $1$-form on $C$ defined by
\begin{equation*}
\psi_1=\frac{w}{u(1-u)}du=u^{-2/3}(1-u)^{-1/3}(1-tu)^{-1/3}du,
\end{equation*}
and $\psi_2$ be $\iota_C^*(\psi_1)$.
Then we have
\begin{equation}
\label{action of rho on diff form}
\rho^*(\psi_1)=\chi(\rho)\psi_1,\quad
\rho^*(\psi_2)=\overline{\chi}(\rho)\psi_2,
\end{equation}
and
\begin{equation*}
H^0(C, {\it\Omega}^1_C)(\chi)=\psi_1\bold C,\quad
H^0(C, {\it\Omega}^1_C)(\overline{\chi})=\psi_2\bold C.
\end{equation*}
We set 
\begin{align}
\label{definition of sigma1,2 in C}
\Sigma_i=\{(w,u)\in C\ |\ u=1-x_i\},\quad  (i=1,2).
\end{align}
Since the extra involution preserves the set 
$\Sigma_1\cup \Sigma_2$,
it acts on the relative homology $H_1(C,\Sigma_1\cup
\Sigma_2)$.
We define $\beta_3$ and $\beta_4$ by the following equalities:
\begin{equation}
\label{homological identity 2}
\rho(1-\rho)\frakl_{1-x_1}=\beta_3,\quad
\rho(1-\rho^2)\frakl_{1-x_2}
=\beta_4+\rho\beta_1,
\end{equation}
where $\frakl_{1-x_1}$ and $\frakl_{1-x_2}$   
are paths in the first sheet from $(u,w)=(0,0)$ to the point with $u=1-x_{1}$ 
and that with $u=1-x_2$, respectively.

 \begin{proposition}
\label{action of extra invol on homology}
\begin{enumerate}
 \item 
The images of $\beta_i$ under the 
extra involution $\iota_C$ are given as follows:
$$
\iota_C(\beta_1)=\beta_1,\quad
\iota_C(\beta_2)=-\beta_2,\quad
\iota_C(\beta_3)=\beta_4,\quad
\iota_C(\beta_4)=\beta_3.
$$
\item
We set 
\begin{equation}
\label{definition of y_i}
y_i=\int_{\beta_i}\psi_1 \quad (i=1,\dots,4).
\end{equation}
Then
$$
\int_{\beta_1}\psi_2=y_1,\quad
\int_{\beta_2}\psi_2=-y_2,\quad
\int_{\beta_3}\psi_2=y_4,\quad
\int_{\beta_4}\psi_2=y_3.
$$
\end{enumerate}
 \end{proposition}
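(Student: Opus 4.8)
The plan is to establish part (1) by a topological analysis of how $\iota_C$ moves the cycles and relative cycles, and then to obtain part (2) as a formal consequence. The reduction of (2) to (1) is immediate: since $\psi_2=\iota_C^*\psi_1$ by definition, the change-of-variables identity for pullbacks gives, for every relative $1$-cycle $\gamma$,
\[
\int_{\gamma}\psi_2=\int_{\gamma}\iota_C^*\psi_1=\int_{\iota_C(\gamma)}\psi_1 .
\]
Taking $\gamma=\beta_i$ and substituting the four identities $\iota_C(\beta_1)=\beta_1$, $\iota_C(\beta_2)=-\beta_2$, $\iota_C(\beta_3)=\beta_4$, $\iota_C(\beta_4)=\beta_3$ from part (1) yields at once $\int_{\beta_1}\psi_2=y_1$, $\int_{\beta_2}\psi_2=-y_2$, $\int_{\beta_3}\psi_2=y_4$, and $\int_{\beta_4}\psi_2=y_3$. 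Hence all the work lies in part (1).

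For the absolute cycles $\beta_1,\beta_2$ I would argue directly from Figure \ref{fig:cycle}, using that $\iota_C$ covers the involution $u\mapsto(1-u)/(1-tu)$ of the base. This M\"obius map has real coefficients, is strictly decreasing on $[0,1]$ with no pole there, and sends $0\mapsto1$, $1\mapsto0$; hence it preserves the real segment $[0,1]$ as a set while reversing its orientation, and it exchanges the branch points $0\leftrightarrow1$ and $1/t\leftrightarrow\infty$. Reading off from the figure how $\beta_1$ and $\beta_2$ are positioned relative to these four branch points, and using that the prescribed real branch of $(1-t)^{1/3}$ in $\iota_C^*(w)=(1-t)^{1/3}u(1-u)/\bigl(w(1-tu)\bigr)$ determines which sheet the image lands on, one checks that $\beta_1$ is carried to itself and $\beta_2$ to its orientation reversal.

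For the relative cycles I would exploit the commutation relation $\rho\circ\iota_C=\iota_C\circ\rho^2$, equivalently $\iota_C\rho=\rho^2\iota_C$ on homology. Applying $\iota_C$ to $\beta_3=\rho(1-\rho)\frakl_{1-x_1}$ and moving $\iota_C$ past the $\bold Z[\rho]$-coefficients gives
\[
\iota_C(\beta_3)=-\rho(1-\rho)\,\iota_C(\frakl_{1-x_1}).
\]
It then remains to identify the image path $\iota_C(\frakl_{1-x_1})$. Since $\iota_C$ sends the ramification point $(u,w)=(0,0)$ to the ramification point over $u=1$ and sends $u=1-x_1$ to $u=1-x_2$, this image is a path from the point over $u=1$ to a point over $u=1-x_2$, which I would express in terms of the standard path $\frakl_{1-x_2}$ from $(0,0)$ to $u=1-x_2$. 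Feeding the resulting expression through the operator $-\rho(1-\rho)$ and using $\rho^3=1$ should reproduce exactly $\beta_4=\rho(1-\rho^2)\frakl_{1-x_2}-\rho\beta_1$, the term $\rho\beta_1$ absorbing the discrepancy between the two basepoints $u=0$ and $u=1$; see (\ref{homological identity 2}). Because $\iota_C$ is an involution, $\iota_C(\beta_4)=\beta_3$ then follows without further work.

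The main obstacle is the sheet bookkeeping in this last identification: along $\iota_C(\frakl_{1-x_1})$ one must control which of the three preimages of each base point is attained, because $\iota_C$ genuinely mixes sheets through $w\mapsto(1-t)^{1/3}u(1-u)/\bigl(w(1-tu)\bigr)$, and one must verify that the fixed real branch of $(1-t)^{1/3}$ is compatible with the first-sheet normalization ($w\in\bold R_+$ for $0<u<1$). Pinning this down is precisely what fixes the correction cycle $\rho\beta_1$ and the sign in $\iota_C(\beta_2)=-\beta_2$; everything else reduces to routine path algebra in $H_1(C,\Sigma_1\cup\Sigma_2)$.
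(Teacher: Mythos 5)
Your proposal is correct and follows essentially the same route as the paper: part (2) is obtained from part (1) by exactly the pullback identity $\int_{\beta_i}\iota_C^*\psi_1=\int_{\iota_{C*}\beta_i}\psi_1$, and part (1) by direct topological computation with the cycles. In fact the paper's proof of (1) consists of the single phrase ``follows from direct computations,'' so your sketch of the M\"obius action on the base, the conjugation $\iota_C\rho=\rho^2\iota_C$ (whence $\iota_{C*}(a\cdot x)=\overline{a}\cdot\iota_{C*}(x)$), and the basepoint correction $\rho\beta_1$ is a faithful, and more explicit, account of the computation the authors intend.
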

 \begin{proof}
(1) The identities in the first statement follows from direct computations.

(2) By the definition of $\psi_2$,
we have
$$
\int_{\beta_i} \psi_2=\int_{\beta_i} \imath_C^*(\psi_1) 
=\int_{\imath_{C*}(\beta_i)} \psi_1.
$$
Thus the statements follows from the results in (1).
\end{proof}

\subsection{Period integral and incomplete integral on the monodromy curve}
We compute the normalized period matrix 
of the curve $C$ in
$$
\bold H_2 =\{\tau \in M_2(\bold C)\mid \ ^t\tau=\tau, \quad \Im(\tau)>0\}
$$
with respect to
the symplectic base $\alpha_1, \dots, \beta_2$ defined in 
\S \ref{relative homology explicit form}
using the result of the last subsection.
We have
$$
\left(
 \begin{matrix}
\int_{\beta_1}\psi_1 &
\int_{\beta_1}\psi_2 \\
\int_{\beta_2}\psi_1 &
\int_{\beta_2}\psi_2 \\
 \end{matrix}
\right)=
\tau_B=\begin{pmatrix}
y_1& y_1 \\
y_2& -y_2 
\end{pmatrix}
$$
by Proposition \ref{action of extra invol on homology}, and 
$$
\left(
 \begin{matrix}
\int_{\alpha_1}\psi_1 &
\int_{\alpha_1}\psi_2 \\
\int_{\alpha_2}\psi_1 &
\int_{\alpha_2}\psi_2 \\
 \end{matrix}
\right)=
\tau_A=\begin{pmatrix}
\omega y_2& -\omega^2 y_2 \\
\omega y_1& \omega^2 y_1 
\end{pmatrix}
$$
by the relations (\ref{symplectic and action of rho})
and  (\ref{action of rho on diff form}).
The normalized period matrix $\tau=\tau_A\tau_B^{-1}$ is equal to
\begin{align}
\label{eq:pt-on-J(C)}
&\tau=\frac{1}{2}\begin{pmatrix}
\sqrt{-3}\eta^{-1}  & -1 \\
-1 & \sqrt{-3}\eta 
\\\end{pmatrix}, 
\quad \eta=\dfrac{y_1}{y_2}\in \bold C.
\end{align}
Note that 
$\tau\in \mathbf{H}_2$ if and only if
$\eta$ is an element in
\begin{equation}
\label{definition of ball B} 
\bold B=\{\eta
\in \bold C\mid \Re(\eta)>0\}.
\end{equation} 
\begin{remark}
\label{rem Schwarz map}
Since $y_1$ and $y_2$ satisfies the hypergeometric 
geometric equation with the parameters $(a,b;c)=(1/3,1/3,1)$,
the (multivalued) function $\eta$ of $t$ yields
the Schwartz map with the Schwartz triangle $(3,\infty,\infty)$ (cf.
\cite{MSTY}).
\end{remark}

We define $\delta_3, \delta_4$ in $H_1(C,\Sigma_1\cup \Sigma_2;\bold Q)$
by the relations
$$
(1-\rho)\delta_3=\beta_3,\quad (1-\rho)\delta_4=\beta_4.
$$
Then the incomplete integrals 
of $(\psi_1,\psi_2)$ along the path
in the second sheet from $P_0$ to the point with $u=1-x_{1}$
are expressed as
\begin{align}
\label{scale change}
\bigg(\int_{\delta_3}\psi_1,\int_{\delta_3}\psi_2\bigg)=
\bigg(\dfrac{1}{1-\omega}y_3, 
\dfrac{1}{1-\omega^2}y_4\bigg). 
\end{align}
Therefore the incomplete integrals for the normalized differential
forms are equal to
\begin{align}
\label{def of zeta}
\zeta=&
\left(\dfrac{1}{1-\omega}y_3, 
\dfrac{1}{1-\omega^2}y_4\right)
\tau_B^{-1} 
\\
\nonumber
=&
\frac{1}{2}\left(
 \frac{y_3}{(1-\omega)y_1}+
 \frac{y_4}{(1-\omega^2)y_1},
 \frac{y_3}{(1-\omega)y_2}-
 \frac{y_4}{(1-\omega^2)y_2}
\right). 
\end{align}
As a consequence, we have the following theorem.
 \begin{theorem}
The normalized period matrix of the monodromy curve defined in
(\ref{def eq of ellipic curve with w-action})
for the symplectic basis defined in
\S \ref{relative homology explicit form} is given in
(\ref{eq:pt-on-J(C)}), where $y_1,y_2$
are defined in (\ref{definition of y_i}).
The incomplete integral for the normalized differential forms
along $\delta_3$ is given by 
$\zeta$ in (\ref{def of zeta}).

 \end{theorem}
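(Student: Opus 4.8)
The plan is to read both assertions off the two period matrices $\tau_A$ and $\tau_B$ displayed just above the theorem, so that the only work is to justify those matrices and then perform two short pieces of linear algebra. First I would record $\tau_B$: its first column is $\int_{\beta_i}\psi_1 = y_i$, which is the definition (\ref{definition of y_i}), and its second column is $\int_{\beta_i}\psi_2$, equal to $y_1$ and $-y_2$ by Proposition~\ref{action of extra invol on homology}(2). The conceptual heart is then $\tau_A$, which I would obtain with no new period integral at all: since $\alpha_1 = \rho(\beta_2)$ and $\alpha_2 = \rho(\beta_1)$ by (\ref{symplectic and action of rho}) and $\int_{\rho_*\gamma}\psi = \int_\gamma\rho^*\psi$, the eigenform relations $\rho^*\psi_1 = \omega\psi_1$, $\rho^*\psi_2 = \omega^2\psi_2$ of (\ref{action of rho on diff form}) turn each $\alpha$-period into $\omega$ or $\omega^2$ times a $\beta$-period already named. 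Thus the $\bold Z[\rho]$-symmetry collapses all four $\alpha$-periods to $y_1$ and $y_2$ and reproduces the displayed $\tau_A$.

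Next I would form $\tau = \tau_A\tau_B^{-1}$, this being precisely the normalization that forces the rescaled forms $(\psi'_1,\psi'_2) = (\psi_1,\psi_2)\tau_B^{-1}$ to satisfy $\int_{\beta_i}\psi'_j = \delta_{ij}$. Inverting $\tau_B$ (its determinant is $-2y_1y_2$) and multiplying, the two off-diagonal entries collapse to $\tfrac12(\omega+\omega^2)$ while the diagonal entries become $\tfrac12(\omega-\omega^2)\,y_2/y_1$ and $\tfrac12(\omega-\omega^2)\,y_1/y_2$; substituting $\omega+\omega^2 = -1$, $\omega-\omega^2 = \sqrt{-3}$ and setting $\eta = y_1/y_2$ gives exactly (\ref{eq:pt-on-J(C)}).

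For the incomplete integral I would start from (\ref{scale change}): since $\beta_3 = (1-\rho)\delta_3$, the identity $\int_{(1-\rho)\delta_3}\psi = \int_{\delta_3}(1-\rho^*)\psi$ applied to $\psi_1$ and $\psi_2$, together with the values $\int_{\beta_3}\psi_1 = y_3$, $\int_{\beta_3}\psi_2 = y_4$ of Proposition~\ref{action of extra invol on homology}(2) and the eigenvalues $\omega,\omega^2$, yields $\int_{\delta_3}\psi_1 = y_3/(1-\omega)$ and $\int_{\delta_3}\psi_2 = y_4/(1-\omega^2)$. Passing to the normalized forms is effected by right-multiplying this row vector by the same $\tau_B^{-1}$, producing the two components of (\ref{def of zeta}). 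Every step is a one-line identity over $\bold Z[\rho]$, so I expect no genuine obstacle; the only point demanding care is bookkeeping the conventions, namely $\int_{\rho_*\gamma}\psi = \int_\gamma\rho^*\psi$ and the fact that the single matrix $\tau_B^{-1}$ normalizes both the complete periods (as $\tau_A\tau_B^{-1}$) and the incomplete integral (by right multiplication), so that the two assertions rest on one computation.
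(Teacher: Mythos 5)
Your proposal is correct and follows essentially the same route as the paper: the theorem is a summary of the computation carried out immediately before it, where $\tau_B$ is read off from Proposition \ref{action of extra invol on homology}, $\tau_A$ is obtained from the relations (\ref{symplectic and action of rho}) and (\ref{action of rho on diff form}) exactly as you describe, $\tau=\tau_A\tau_B^{-1}$ gives (\ref{eq:pt-on-J(C)}), and the incomplete integral is normalized by right multiplication of (\ref{scale change}) by the same $\tau_B^{-1}$. Your explicit matrix arithmetic (determinant $-2y_1y_2$, the identities $\omega+\omega^2=-1$ and $\omega-\omega^2=\sqrt{-3}$) checks out against the displayed formulas.
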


\section{Transcendental lattice of K3 surface $\widetilde{X}$}
\label{section:transcendental lattice structure}

In this section, we study the transcendental lattice of
$\widetilde{X}$ for $X=X_{\overset{\to}\ell}$
for $\overset{\to}\ell \in \Cal M$
using the map $\lambda$ defined in 
Proposition \ref{trivialization for monodromy covering}.

\subsection{Transcendental lattices of $\widetilde{X}$ and $E\times C$}
We introduce the conjugate $\bold Z[\rho]$-module structure
on $H^1(E)$ by setting $\rho\cdot m={\overline{\rho'}}m$.
The module $H^1(E)$ with the conjugate $\bold Z[\rho]$-action
is denoted by $\overline{H}^1(E)$.
Using this action, we consider the tensor product
$\overline{H}^1(E)\otimes_{\bold Z[\rho]}H^1(C)$.
We have a relation ${\rho'}^{-1}\otimes 1=1\otimes \rho$
on $\overline{H}^1(E)\otimes_{\bold Z[\rho]}H^1(C)$.
 \begin{definition}
  \begin{enumerate}
   \item
	Let
  $T_{E\times C}$ be the orthogonal complement of
the submodule generated by $E\times \{pt\}$
and $\{pt\}\times C$
in $H^2(E\times C)$.
It is isomorphic to $H^1(E)\otimes H^1(C)$.
The intersection form on $H^1(E)\otimes H^1(C)$ is obtained by the restriction of
that on $H^2(E\times C)$. It is equal to the tensor product of the 
cup products on $H^1(E)$ and $H^1(C)$.
\item
   We define a sub-lattice $KT_{E\times C}$
  of
  $H^1(E)\otimes
 H^1(C)$ by
$$
KT_{E\times C}=
\ker\bigg(H^1(E)\otimes
 H^1(C)\to
\overline{H}^1(E)\otimes_{\bold Z[\rho]}
H^1(C)\bigg).
$$
     \end{enumerate}
It is easy to see that $(1\otimes \rho)v\in KT_{E\times C}$
if $v\in KT_{E\times C}$. By this action,
$KT_{E\times C}$ is a $\bold Z[\rho]$-module.
\end{definition}

\begin{proposition}
\label{prop K3 and ExC transcendental}
\begin{enumerate}
\item
 The discriminant of
      $KT_{E\times C}$ is equal to $9$.
The symmetric bilinear form on 
$KT_{E\times C}$ induced from the intersection form
is isomorphic to $A_2\oplus A_2(-1)$.
Here the Grammian matrix of the $A_2$ lattice is given by
$\begin{pmatrix}2 & -1 \\-1 & 2 \end{pmatrix}$.
 \item
      \label{item nat incl E times C}
  The morphism $\lambda^*$ induces the following natural inclusions:
\begin{align}
\label{product structure, isom on cohomology}
T_{X}
\subset KT_{E\times C}\subset T_{E\times C}.
\end{align}
The modules $T_{X}$ and
$KT_{E\times C}$
are free $\bold Z[\rho]$-modules of rank two.
   \item
We introduce a symmetric bilinear form $\<\ ,\ \>_{E\times C}$
	on $T_X$ by the restriction of the intersection paring
	on $H^2(E\times C)$ via the inclusions
$$
T_X\subset KT_{E\times C} \to H^1(E)\otimes
H^1(C) \to H^2(E\otimes C).
$$
Then we have 
$$
\<\ ,\ \>_{E\times C}=3\<\ ,\ \>_{T}.
$$
 \item
\label{compare tr lat for ExC and X}
      Under the left inclusion of 
(\ref{product structure, isom on cohomology}),
      $T_X$
is equal to $(1\otimes 1-1\otimes \rho)\ KT_{E\times C}$
and $(1+\hat\rho^*+{\hat\rho}^{2*})T_{E\times C}$.
As a consequence, $T_X$ is isomorphic to $A_2\oplus A_2(-1)$
and $(1-\rho)T^*_X=T_X$.
	\end{enumerate}
 \end{proposition}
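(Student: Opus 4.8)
The plan is to reduce everything to the explicit $\bold Z[\rho]$-module descriptions $H^1(E)=\bold Z[\rho']\Delta_E$ and $H^1(C)=\bold Z[\rho]\beta_1\oplus\bold Z[\rho]\beta_2$ together with the intersection forms of Proposition \ref{action of rho on curve total dim}, using throughout that $\bold Z[\rho]$ is the ring of Eisenstein integers, hence Euclidean, so that any submodule of a free module is free. Write $s=\rho'\otimes 1$ and $r=1\otimes\rho$ for the two commuting operators on $T_{E\times C}=H^1(E)\otimes_{\bold Z}H^1(C)$. The defining surjection $T_{E\times C}\to\overline H^1(E)\otimes_{\bold Z[\rho]}H^1(C)$ is exactly the quotient by the image of $s^{-1}-r$ (the conjugate structure makes $\rho$ act by $\overline{\rho'}=s^{-1}$ on the $E$-factor), so $KT_{E\times C}=(s^{-1}-r)\,T_{E\times C}$; a short computation with $\hat\rho^*=s^{-1}r$ shows $\hat\rho^*$ fixes this submodule pointwise, whence $KT_{E\times C}=(T_{E\times C})^{\hat\rho}$ is the full $\hat\rho$-invariant sublattice, saturated because the target of the projection is torsion-free. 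For part (\ref{item nat incl E times C}) the right inclusion $KT_{E\times C}\subset T_{E\times C}$ is the definition; the left inclusion $T_X\subset KT_{E\times C}$ follows since $\lambda$ factors through the $\hat\rho$-quotient (Proposition \ref{trivialization for monodromy covering}), so $\lambda^*$ lands in the invariants $KT_{E\times C}$ (equivalently, dualizing the $\bold Z[\rho]$-tensor injection (\ref{basic isom of mixed HS})). A rank count gives $\operatorname{rank}_{\bold Z}KT_{E\times C}=8-4=4$, so $KT_{E\times C}$ is free of rank $2$ over $\bold Z[\rho]$, and $T_X$, sitting inside it with $\bold Z$-rank $4$, is likewise free of rank $2$.

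For part (1) I would exhibit $KT_{E\times C}$ as the free $\bold Z[\rho]$-module on $\kappa_j=(s^{-1}-r)(\Delta_E\otimes\beta_j)$, $j=1,2$, and compute its form. Because the forms of Proposition \ref{action of rho on curve total dim} are already written in the shape $\tfrac23\Re\bigl((\rho-\overline\rho)(\cdots)\bigr)$, the tensor form on $KT_{E\times C}$ carries a $\bold Z[\rho]$-valued hermitian refinement, and I expect the $2\times 2$ hermitian Gram matrix in the basis $\kappa_1,\kappa_2$ to come out equal to $U$. Then $KT_{E\times C}$ is the $\bold Z$-lattice underlying the hermitian $\bold Z[\rho]$-lattice $(\bold Z[\rho]^2,U)$; its discriminant over $\bold Z$ is $|\operatorname{disc}(\bold Z[\rho]/\bold Z)|^2=3^2=9$ and its signature is $(2,2)$ (from the signature $(1,1)$ of $U$), which forces $KT_{E\times C}\cong A_2\oplus A_2(-1)$. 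This is only a finite computation; I do not expect it to be the obstacle.

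Part (3) is geometric and independent. Since $\lambda$ is generically three-to-one, for $a,b\in H^2(\widetilde X)$ one has $\langle\lambda^*a,\lambda^*b\rangle_{E\times C}=3\langle a,b\rangle_{\widetilde X}$; as the transcendental lattice is unaffected by the blow-ups and $\lambda$ restricts to a genuine degree-$3$ covering on the relevant open sets, restricting to $T_X$ yields $\langle\ ,\ \rangle_{E\times C}=3\langle\ ,\ \rangle_T$. Combined with $\operatorname{disc}(T_X)=9$ (established above), this makes $\lambda^*T_X$ a rank-$4$ sublattice of $KT_{E\times C}$ whose $E\times C$-form has discriminant $3^4\cdot 9$, so $[KT_{E\times C}:\lambda^*T_X]^2=3^4$, i.e. the index is $9$.

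Finally, in part (4) I would match the two descriptions. On the one hand $(1-\rho)KT_{E\times C}$ is an index-$9$ sublattice on which the $E\times C$-form is multiplied by $(1-\rho)\overline{(1-\rho)}=3$; by the index count of the previous paragraph and the $\rho$-equivariance of $\lambda^*$, one inclusion plus equality of indices forces $T_X=(1\otimes 1-1\otimes\rho)\,KT_{E\times C}$, and then the map $(1-\rho)\colon(KT_{E\times C},\langle\ ,\ \rangle_{E\times C})\to(T_X,\langle\ ,\ \rangle_T)$ is an isometry, giving $T_X\cong A_2\oplus A_2(-1)$; the relation $(1-\rho)T_X^{*}=T_X$ then follows from the Eisenstein fact that the inverse different of $\bold Z[\rho]/\bold Z$ is generated by $1-\rho$, applied to each $A_2$-summand. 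For the identification with $(1+\hat\rho^*+\hat\rho^{*2})\,T_{E\times C}$ I would use the operator identity in $\bold Z[\rho]\otimes\bold Z[\rho']$: writing $3=(1-r)(1-r^2)$ and noting $1-sr\equiv 0$ modulo the relation $s^{-1}\equiv r$ defining $KT_{E\times C}$, one gets $1+\hat\rho^*+\hat\rho^{*2}=(1-r)\,\theta$ with $\theta\equiv s^{-1}-r$ on $KT_{E\times C}$, so that $\operatorname{Im}(1+\hat\rho^*+\hat\rho^{*2})=(1-\rho)KT_{E\times C}=T_X$. The genuine difficulty of the whole proposition lies precisely here: turning these image/kernel identities into equalities of lattices \emph{over $\bold Z$} (not merely up to finite index), which is where the degree-$3$ scaling of part (3) and the discriminant bookkeeping must be invoked to rule out index-$3$ ambiguities; by contrast the lattice-type computation in part (1) and the inclusions in part (2) are routine.
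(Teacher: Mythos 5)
Your proposal is correct and follows essentially the same route as the paper: explicit $\bold Z[\rho]$-module descriptions of $H^1(E)$ and $H^1(C)$ for (1) and (2), the degree-$3$ projection formula for (3), and the identification $\operatorname{Im}(1+\hat\rho^*+\hat\rho^{2*})=(1-\rho)KT_{E\times C}$ combined with the discriminant/index count $[KT_{E\times C}:T_X]=[KT_{E\times C}:(1-\rho)KT_{E\times C}]=9$ for (4). The only points where the paper supplies detail you elide are the injectivity of $\lambda^*|_{T_X}$ (handled there via the diagram of resolutions and the general fact about orthogonal complements under birational morphisms) and the explicit statement that $x+\hat\rho(x)+\hat\rho^2(x)\in\lambda^*(T_X)$, which is the ``one inclusion'' your index argument needs; both are routine to fill in.
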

\def\pic{\operatorname{Pic}}
\begin{proof}
We remark a general fact about the orthogonal complements
of submodules of the Neron-Severi lattices
in the second cohomology groups
of a surfaces.
 Let $g:X_1\to X_2$ be a birational morphism of smooth projective surfaces.
 Let $\Cal S$ be a subset in the Neron-Severi lattice of $X_2$ and
 $\Cal E$ be the set of exceptional divisors for $g$.
 Then the orthogonal complement of $\Cal S$ in $H^2(X_2)$
 is mapped by $g^*$ isomorphically to the orthogonal complement of
 the module generated by $g^*(\Cal S)$ and $\Cal E$.

(1) 
The statement is obtained from
the intersection forms on $H^1(E)$ and $H^1(C)$.
The computation is reduced 
to the case where $t\in \bold R, 0<t<1$, which follows from
Proposition \ref{action of rho on curve total dim}.

 (2)
 Let $\hat\rho$ be the action ${\rho'}^{-1}\times\rho$ on $E\times C$.
Let $\widehat{X}$ be the minimal resolution of
$(E\times C)/\<\hat\rho\>$ and
$(E\times C)\widehat\ $ be the normalization of
$$
(E\times C)\times_{(E\times C)/\<\hat\rho\>}\widehat X.
$$
Then we have the following diagram:
 \begin{equation}
\label{blow up and quotient E times C}
\begin{matrix}
 E\times C & \xleftarrow{g_1} & (E\times C)^{\widehat\ } \\
 \downarrow && 
\phantom{\text{\scriptsize $\lambda^*$}}
\downarrow \text{\scriptsize $\lambda^*$}\\
 E\times C/\<\hat\rho\> & \xleftarrow{f_1} &
\widehat{X}
\\
  & &\phantom{\text{\scriptsize{$g_2$}}}\downarrow \text{\scriptsize{$g_2$}}\\
  & &\widetilde X \\
  & &\phantom{\text{\scriptsize{$f_2$}}}\downarrow \text{\scriptsize{$f_2$}}\\
  & & X'. \\
\end{matrix}
 \end{equation}
 Here the morphisms $f_1$ and $f_2$ are minimal resolutions of
 singularities and
 $g_1$ and $g_2$ are birational morphisms between smooth projective surfaces. 
 By the fact cited above, we have the following diagram:
 \begin{align*}
\begin{matrix}
H^2(\widetilde{X})&\xrightarrow{g_2^*} &
H^2(\widehat{X})
& \xrightarrow{\lambda^*} &
H^2((E\times C)^{\widehat\ })& \xleftarrow{g_1^*} &
H^2(E\times C) 
\\
\cup & &
\cup
&  &
\cup & &
\cup 
\\
T_{X}&\simeq &
 T_{X}
& 
\xrightarrow{\lambda^*} 
&
 T_{(E\times C)^{\widehat\ }}& \simeq &
 T_{E\times C}.
\end{matrix}
\end{align*}
Here,  $T_{(E\times C)^{\widehat\ }}$ is the 
orthogonal complement of the divisors $E\times\{pt\}$,
$\{pt\}\times C$ and exceptional divisors for $(E\times C)\hat\ \to 
E\times C$.

 Since the image of $\lambda^*$ is invariant under the action of
$\hat\rho$ and
$$
0=(\hat\rho)^*v-v=({\rho'}^{-1}\otimes' \rho)^*v-v
=(1\otimes' \rho^2)^*v-v
$$
on
$\overline{H}^1(E)\otimes_{\bold Z[\rho]}H^1(C)$,
the image of $T_X$ under the map $\lambda^*$
is contained in $KT_{E\times C}$.

 (3)
 The generic transcendental lattice $T_X$ is equipped with the
intersection form $\<\ ,\ \>_{T}$ by Definition \ref{def:generic transcendental space}.
On the other hand, the left end
of the isomorphism (\ref{product structure, isom on cohomology})
is equipped with an intersection form $\<\ ,\ \>_{E\times C}$ by restricting that of
 $H^1(E)\otimes H^1(C)$.
 Since the degree of the morphism $(E\times C)^{\widehat\ }\to
 (E\times C)^{\widehat\ }/\<\hat\rho\>$ between smooth projective
 surfaces is three, by the properties of the cup product and the trace map,
we have the statement.

 (4)
 For an element $x\in T_{E\times C}=H^1(E)\otimes H^1(C)$,
 $x+\hat\rho(x)+\hat\rho^2(x)$ is contained in the image of $T_X$.
Since
$H^1(E)\simeq \bold Z[\rho']$
and $H^1(C)\simeq \bold Z[\rho]\oplus \bold Z[\rho]$
 as a $\bold Z[\rho']$-module and a $\bold Z[\rho]$-module,
we have an isomorphism 
$$
(1+\hat\rho+\hat\rho^2)T_{E\times C}\simeq 
(1\otimes 1-1\otimes \rho)KT_{T\times C}.
$$
Therefore we have inclusions
\begin{align*}
\begin{matrix}
(1+\hat\rho+\hat\rho^2)T_{E\times C}&\overset{f}\subset& T_X&\subset &KT_{T\times C}&\subset& T_{E\times C}\\
\Vert & \\
(1\otimes 1-1\otimes \rho)KT_{T\times C}.
\end{matrix}
\end{align*}
Since $\delta((T_X,\<\ ,\ \>)=9$ and the equality 
$\<\ ,\ \>_{E\times C}=3\<\ ,\ \>_{T}$, 
the discriminant $\delta((T_X,\<\ ,\ \>_{E\times T}))$ is equal to 
$3^4\cdot 9$.
By (1), we have $\delta(KT_{E\times C})=9$. Since
$[KT_{E\times C}:(1-\rho)KT_{E\times C}]=9$, the inclusion $f$ 
is an isomorphism.
\end{proof}
\def\pic{\operatorname{Pic}}
As a corollary, we have an explicit basis of
the transcendental lattice and its intersection form.
\begin{corollary}
\label{cor to lattice str of T}
The lattice $T_X$ is freely generated by $(1 +\hat\rho+(\hat\rho)^2)
(\Delta_E\otimes \beta_i)$ $(i=1,2)$ over $\bold Z[\rho]$.
It is isomorphic to $A_2\oplus A_2(-1)$.
\end{corollary}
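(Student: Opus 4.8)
The isomorphism type $A_2\oplus A_2(-1)$ has already been established in Proposition~\ref{prop K3 and ExC transcendental}(4), so the real content of the statement is to exhibit the two distinguished generators. The plan is to work inside $T_{E\times C}=H^1(E)\otimes_{\bold Z}H^1(C)$, where by that same proposition $T_X=(1+\hat\rho+(\hat\rho)^2)T_{E\times C}$. Using Proposition~\ref{action of rho on curve total dim} I would substitute $H^1(E)=\bold Z[\rho']\Delta_E$ and $H^1(C)=\bold Z[\rho]\beta_1\oplus\bold Z[\rho]\beta_2$, so that
$$
T_{E\times C}=\big(H^1(E)\otimes_{\bold Z}\bold Z[\rho]\beta_1\big)\oplus\big(H^1(E)\otimes_{\bold Z}\bold Z[\rho]\beta_2\big).
$$
Because $\hat\rho={\rho'}^{-1}\times\rho$ acts through $\rho$ on the $C$-factor, it preserves each summand and is $\bold Z[\rho]$-linear for the module structure $1\otimes\rho$. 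Hence $T_{E\times C}$ is two copies of $M:=H^1(E)\otimes_{\bold Z}\bold Z[\rho]$, a free $\bold Z[\rho]$-module of rank two with basis $\{\Delta_E\otimes1,\ {\rho'}\Delta_E\otimes1\}$, and on each copy $\hat\rho$ restricts to one and the same order-three $\bold Z[\rho]$-linear automorphism $\sigma$.

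Everything then reduces to understanding $(1+\sigma+\sigma^2)M$ on a single copy. On the $\hat\rho$-invariant image the relation $\hat\rho=1$ forces $1\otimes\rho={\rho'}\otimes1$, so multiplication by $\rho$ agrees with multiplication by ${\rho'}\otimes1$; since the latter commutes with $\sigma$, it sends $c_1:=(1+\sigma+\sigma^2)(\Delta_E\otimes1)$ to $c_2:=(1+\sigma+\sigma^2)({\rho'}\Delta_E\otimes1)$, which is the key identity $c_2=\rho\,c_1$. (The same relation drops out of the explicit $2\times2$ matrix of $1+\sigma+\sigma^2$ computed from ${\rho'}^2=-1-{\rho'}$.) As the image is a priori generated over $\bold Z[\rho]$ by the columns $c_1,c_2$, it is the cyclic module $\bold Z[\rho]\,c_1$, free of rank one since $M$ is torsion-free and $c_1\neq0$. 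Assembling the two summands gives
$$
T_X=\bold Z[\rho]\,v_1\oplus\bold Z[\rho]\,v_2,\qquad v_i=(1+\hat\rho+(\hat\rho)^2)(\Delta_E\otimes\beta_i),
$$
the asserted free basis; the identification with $A_2\oplus A_2(-1)$ is then quoted from Proposition~\ref{prop K3 and ExC transcendental}(4), or recovered by pairing the $v_i$ with the tensor intersection form of Proposition~\ref{action of rho on curve total dim} and dividing by $3$ via $\<\ ,\ \>_{E\times C}=3\<\ ,\ \>_{T}$.

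The step most in need of care is the passage from a rank-one image to a \emph{cyclic} image with the specific generator $v_i$: a priori the trace operator might require both $c_1$ and $c_2$, and it is only the relation $c_2=\rho\,c_1$ that collapses this to a single generator. I would therefore verify that identity carefully, paying attention to how the orientation of the covering transformation enters the $\hat\rho$-action on $H^1(E)$ (that is, whether $\sigma$ is built from ${\rho'}$ or ${\rho'}^{-1}$). The final conclusion is insensitive to this choice, since in every case $\sigma$ has order three with a one-dimensional $+1$-eigenspace over $\bold Q(\rho)$, which is exactly what forces $(1+\sigma+\sigma^2)M$ to have $\bold Z[\rho]$-rank one.
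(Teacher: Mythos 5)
Your proposal is correct and follows essentially the same route as the paper: both reduce to Proposition \ref{prop K3 and ExC transcendental}(4), which identifies $T_X$ with $(1+\hat\rho+\hat\rho^2)T_{E\times C}$ (equivalently, gives surjectivity of $\lambda_*$ from $T_{E\times C}=H^1(E)\otimes H^1(C)$), and then read off generators from the $\bold Z[\rho',\rho]$-generators $\Delta_E\otimes\beta_1,\Delta_E\otimes\beta_2$ of $T_{E\times C}$. The paper's proof is a single sentence, so the only difference is that you make explicit the verification it leaves implicit, namely the identity $c_2=\rho c_1$ (which I checked: with $\hat\rho={\rho'}^{-1}\otimes\rho$ one has $(1+\hat\rho+\hat\rho^2)(\rho'\Delta_E\otimes\beta_i)=(1\otimes\rho)(1+\hat\rho+\hat\rho^2)(\Delta_E\otimes\beta_i)$), which is exactly what collapses each summand to a cyclic, hence free rank-one, $\bold Z[\rho]$-module.
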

\begin{proof}
By Proposition \ref{prop K3 and ExC transcendental}
(\ref{compare tr lat for ExC and X}), 
the homomorphism 
$\lambda_*:T_{E\times C} \to T_{X}$ obtained by the Poincar\'e duality
is surjective. 
\end{proof}

\subsection{Relative homologies for divisors in K3 surfaces}

The proper transforms of $E_1, E_2$ in $\widetilde{X}$
are also denoted by $E_1, E_2$.
Then the curves $E_1$ and $E_2$ are isomorphic to $E$
defined in (\ref{def eq of ellipic curve with w-action}).
By the long exact sequence of relative homologies and
the Mayer-Vietoris exact sequence, we have the following 
exact sequences of mixed Hodge
structures 
on integral homologies:
\begin{align}
\label{long exact relative to E1 E2}
0\to 
H_2(E_1)\oplus H_2(E_2)
\to H_2(\widetilde{X}) 
\to 
H_2(\widetilde{X},E_1\cup E_2)\xrightarrow{\partial}
H_1(E_1\cup E_2)\to 0,
 \end{align}
\begin{align*}
0 \to H_1(E_1)\oplus H_1(E_2)\to
H_1(E_1\cup E_2)\to H_0(E_1\cap E_2).
 \end{align*}
We set 
\begin{align*}
&H_2^{(0)}(\widetilde{X},E_1\cup E_2)=\partial^{-1}(H_1(E_1)\oplus H_1(E_2)), \\
&H_2^{(i)}(\widetilde{X},E_1\cup E_2)=\partial^{-1}(H_1(E_i)) \quad (i=1,2),
\end{align*}
where the map $\partial$ is defined in the exact sequence 
(\ref{long exact relative to E1 E2}).
We set
\begin{align*}
M_1&=\dfrac{H_2(\widetilde{X})}
{H_2(E_1)\oplus H_2(E_2)},
\\
M_2&=H_2^{(0)}(\widetilde{X},E_1\cup E_2),
\\
M_3&=H_1(E_1)\oplus H_1(E_2),
\end{align*}
and we have a short exact sequence
$$
0\to M_1 \to M_2\to M_3 \to 0.
$$
\begin{proposition}
\label{relative with two elliptic curve rho coinvariant}
\begin{enumerate}
\item
The module $M_1$
is torsion free.
\item
We have an exact sequence
$$
0\to (M_1)_{\rho} \to (M_2)_{\rho} \to (M_3)_{\rho}\to 0.
$$
 As for the coinvariant part, see
Notations in \S \ref{subsec:notation}.
\item
We have the following isomorphisms.
\begin{align*}
(M_1)_{\rho}=H_2(\widetilde{X})_{\rho}, \quad
(M_2)_{\rho}=H_2^{(0)}(\widetilde{X},E_1\cup E_2)_{\rho}.
\end{align*}
As a consequence, we have the following exact sequences:
\begin{align}
\label{fundamental exact sequence for curves on K3}
&0\to H_2(\widetilde{X})_{\rho} \to
H_2^{(0)}(\widetilde{X},E_1\cup E_2)_{\rho}
\to
 H_1(E_1)\oplus H_1(E_2)\to 0,
 \\
 \nonumber
&0\to H_2(\widetilde{X})_{\rho} \to
H_2(\widetilde{X},E_2)_{\rho}
\to
H_1(E_2)\to 0.
\end{align}
\end{enumerate}
\end{proposition}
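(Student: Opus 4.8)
The plan is to deduce all three assertions from one homological-algebra device applied to the short exact sequence $0\to M_1\to M_2\to M_3\to 0$ (and to the analogous sequences coming from (\ref{long exact relative to E1 E2})), fed by two facts about the $\rho$-representation on the homology of the elliptic curves $E_1,E_2$. The device is the snake lemma for the inclusions of $\rho$-invariants $A^\rho\subset A$, $B^\rho\subset B$, $C^\rho\subset C$ associated to a short exact sequence $0\to A\to B\to C\to 0$ of finitely generated abelian groups with an order-three $\rho$-action; since these inclusions are injective with cokernels $A_\rho,B_\rho,C_\rho$, it produces
\begin{equation*}
0\to A^\rho\to B^\rho\to C^\rho\xrightarrow{\ \delta\ }A_\rho\to B_\rho\to C_\rho\to 0,
\end{equation*}
in which $\delta=0$ precisely when $B^\rho\to C^\rho$ is surjective, and then $0\to A_\rho\to B_\rho\to C_\rho\to 0$ is exact. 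The two inputs are: (i) $\rho$ acts trivially on $H_2(E_i)\simeq\bold Z$, being a holomorphic automorphism fixing the fundamental class, so that $\bigl(H_2(E_1)\oplus H_2(E_2)\bigr)_\rho=0$; and (ii) $\rho$ acts on $H_1(E_i)\simeq\bold Z^2$ with eigenvalues the two primitive cube roots of unity, so $H_1(E_i)^\rho=0$ and hence $M_3^\rho=0$.

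For part (2) I would apply the device to $0\to M_1\to M_2\to M_3\to 0$: by (ii) the target of invariants $M_3^\rho$ vanishes, so $M_2^\rho\to M_3^\rho$ is trivially surjective, $\delta=0$, and the exact sequence $0\to (M_1)_\rho\to(M_2)_\rho\to(M_3)_\rho\to 0$ follows, with $(M_3)_\rho=M_3=H_1(E_1)\oplus H_1(E_2)$ because $M_3^\rho=0$. In part (3) the equality $(M_2)_\rho=H_2^{(0)}(\widetilde{X},E_1\cup E_2)_\rho$ is the definition of $M_2$; for $(M_1)_\rho$ I apply the device to $0\to H_2(E_1)\oplus H_2(E_2)\to H_2(\widetilde{X})\to M_1\to 0$, where by (i) the left-hand coinvariant vanishes, so the segment $0=\bigl(H_2(E_1)\oplus H_2(E_2)\bigr)_\rho\to H_2(\widetilde{X})_\rho\to (M_1)_\rho\to 0$ exhibits the isomorphism $H_2(\widetilde{X})_\rho\simeq (M_1)_\rho$. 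Substituting these identifications into the sequence of part (2) gives the first displayed consequence; the second, $0\to H_2(\widetilde{X})_\rho\to H_2(\widetilde{X},E_2)_\rho\to H_1(E_2)\to 0$, comes from the identical argument run with the single curve $E_2$, using $H_1(\widetilde{X})=0$ so that $\partial\colon H_2(\widetilde{X},E_2)\to H_1(E_2)$ is already surjective. All maps are morphisms of mixed Hodge structures by functoriality of invariants and coinvariants.

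The one genuinely geometric point, which I expect to be the main obstacle, is part (1): the torsion-freeness of $M_1$, equivalent to the primitivity of $\langle [E_1],[E_2]\rangle$ in $H_2(\widetilde{X})$. Because $E_1,E_2$ are smooth elliptic curves, adjunction on the K3 surface gives $[E_i]^2=2g-2=0$, so they are isotropic and intersection with $E_1,E_2$ cannot detect primitivity; one must bring in auxiliary classes. My plan is to use the zero sections $O_1,O_2$ of the two elliptic fibrations $\epsilon_1,\epsilon_2$ of \S\ref{sec:conf of six lines loc system}: as $E_2$ is a fiber of $\epsilon_1$ and $E_1$ a fiber of $\epsilon_2$ one has $[E_2]\cdot[O_1]=[E_1]\cdot[O_2]=1$, and I would read off the two remaining numbers $[E_1]\cdot[O_1]$ and $[E_2]\cdot[O_2]$ from the explicit fibration geometry to verify that the $2\times2$ intersection matrix of $\{[E_1],[E_2]\}$ against $\{[O_2],[O_1]\}$ is unimodular. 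Unimodularity forces $\langle [E_1],[E_2]\rangle=\bigl(\langle [E_1],[E_2]\rangle\otimes\bold Q\bigr)\cap H_2(\widetilde{X})$, i.e.\ primitivity, hence $M_1$ is torsion free. The delicate part is exactly the bookkeeping of how a section of one fibration meets the distinguished fiber of the other; once that determinant is pinned down, the rest is formal given inputs (i) and (ii).
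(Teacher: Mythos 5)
Your route is essentially the paper's for all three parts, but two items must be closed before this is a proof. First, part (1): the paper runs exactly the auxiliary-class argument you sketch, pairing $E_1,E_2$ against $L_1,L_2$ (the reduced preimages of $\ell_1,\ell_2$). Since $\ell_1$ passes through $p_1=\ell_1\cap\ell_3\cap\ell_6$ but not through $p_2$, and symmetrically for $\ell_2$, one gets $(E_i,L_j)=\delta_{ij}$; from $km=aE_1+bE_2$ one reads off $a=(km,L_1)$, $b=(km,L_2)$, hence $k\bigl(m-(m,L_1)E_1-(m,L_2)E_2\bigr)=0$, and torsion-freeness of $H_2(\widetilde{X})$ finishes. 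Your plan with zero sections closes up in the same way: the sections of $\epsilon_1$ supplied by the branch locus are $L_2,L_4,L_5$, and $\ell_2,\ell_4,\ell_5$ all avoid $p_1$ by the definition of a special configuration, so $[E_1]\cdot[O_1]=0$ (and symmetrically $[E_2]\cdot[O_2]=0$), giving the identity matrix. Until you pin down these diagonal entries, part (1) --- which you correctly identify as the only geometric content --- is not proved. Note that your instinct here is right: $E_1$ is a $3$-section of $\epsilon_1$ and $E_2$ is a fiber, so $E_1\cdot E_2=3$ and the Gram matrix of $\{E_1,E_2\}$ alone has determinant $-9$; the auxiliary classes really are needed.

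Second, your six-term ``device'' is not correct for the coinvariants used in this paper, which are $M_\rho=M/M^\rho$ (see \S\ref{subsec:notation}) rather than $M/(\rho-1)M$: the snake-lemma connecting map lands in $A/(\rho-1)A$, and the sequence $0\to A^\rho\to B^\rho\to C^\rho\to A/A^\rho\to B/B^\rho\to C/C^\rho\to 0$ fails in general (take $B=\bold Z^3$ with $\rho$ permuting coordinates and $A=\ker(\mathrm{sum})$: the cokernel of $B^\rho\to C^\rho$ is $\bold Z/3$, while $A/A^\rho\cong\bold Z^2$ is torsion free, so exactness at $A/A^\rho$ is impossible). You only invoke the device in two situations where the conclusion does hold, and each needs a short direct check: (i) when $C^\rho=0$, injectivity of $A_\rho\to B_\rho$ follows from $A\cap B^\rho=A^\rho$ and exactness in the middle from $B/(A+B^\rho)\cong C$ --- this is precisely the paper's proof of (2); (ii) when $A\subseteq B^\rho$ and $B$ is torsion free, injectivity of $B_\rho\to C_\rho$ requires observing that $(\rho-1)b\in B^\rho$ forces $3(\rho-1)b=(1+\rho+\rho^2)(\rho-1)b=0$, hence $(\rho-1)b=0$; this yields $(M_1)_\rho=H_2(\widetilde{X})_\rho$. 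With these two verifications made explicit, the remainder of your (2) and (3), including deducing the second sequence in (\ref{fundamental exact sequence for curves on K3}) from the single-curve long exact sequence and $H_1(\widetilde{X})=0$, agrees with the paper's argument.
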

\begin{proof}
(1)
By the Poincar\'e duality
$H^2(\widetilde{X})
\simeq H_2(\widetilde{X})$,
the intersection form on
$H^2(\widetilde{X})$ induces 
a symmetric bilinear form on
$H_2(\widetilde{X})$.
The class of $E_i$ is also denoted by $E_i$
for $i=1,2$.
Let $m\in H_2(\widetilde{X})$ and assume that 
$km=aE_1+bE_2$ for some $k\in \bold Z$.
Using the intersection, we have $(km,L_1)=a,(km,L_2)=b$.
Therefore $k(m-(m,L_1)E_1-(m,L_2)E_2)=0$.
Since $H_2(\widetilde{X})$ is torsion free, we have 
$x\in \bold Z E_1\oplus \bold Z E_2$.

(2) Since the fixed part $(M_3)^{\rho}$ of $M_3$ is zero,
the natural map $(M_1)^{\rho}\to (M_2)^{\rho}$ is an isomorphism
and we have the required exact sequence.
The natural homomorphism $M_3\to (M_3)_\rho$ is an isomorphism.

(3) We also have the following exact sequence:
$$
0\to 
H_2(E_2)
\to H_2(\widetilde{X}) 
\to 
H_2(\widetilde{X},E_2)\xrightarrow{\partial}
H_1(E_2)\to 0.
$$
Using this exact sequence and a similar argument in 
Proposition \ref{relative with two elliptic curve rho coinvariant},
we have the following commutative diagram whose rows are exact sequences.
$$
\begin{matrix}
0\to& H_2(\widetilde{X})_{\rho} &\to&
H_2(\widetilde{X},E_2)_{\rho}
&\to&
H_1(E_2)&\to 0 
\\
&\text{\scriptsize$\simeq$}\downarrow 
\phantom{\text{\scriptsize$\simeq$}}
& & \downarrow & & \downarrow
\\
0\to& H_2(\widetilde{X})_{\rho} &\to&
H_2^{(0)}(\widetilde{X},E_1\cup E_2)_{\rho}
&\to&
H_1(E_1)\oplus
H_1(E_2)
&\to 0.
\end{matrix}
$$
\end{proof}

\subsection{Comparison for relative homologies}
By the coordinates $(u,r)$,
the fibers of the fibration $\epsilon_1:X' \to \bold P^1$ at $u=0,\infty,1-x_1$ are equal 
to $L_1,L_3,E_2$. Let $\Sigma_i$ $(i=1,2)$ be the subsets of $C$ defined in (\ref{definition of sigma1,2 in C}).
Via the commutative diagram (\ref{blow up and quotient E times C}), 
we have
\begin{align*}
E\times (C-\Sigma_1)
\ot
(E\times (C-\Sigma_1))^{\widehat\ }
\xrightarrow{\lambda^0}
\widehat{X}-E_2
\to
\widetilde{X}-E_2
\end{align*}
and homomorphisms of Hodge structures:
$$
H_2(E\times C,E\times \Sigma_1)
\ot
H_2((E\times C)^{\widehat\ },E\times \Sigma_1)
\xrightarrow{\lambda^0_*}
H_2(\widetilde{X},E_2).
$$
For a $\<\rho',\rho\>$-module $M$,
we set $M_{\Delta}=M/\<\hat\rho(x)-x\>$, where 
$\hat\rho={\rho'}^{-1}\times\rho $.
If $\hat\rho$ acts trivially on $M$, then $M_{\Delta}=M$.
Then the group $\<\rho',\rho\>/\<\hat\rho\>=\<\rho\>$ acts on the quotient module 
on $M_{\Delta}$.
Let $M, N$ be $\<\rho',\rho\>$-modules
and $f:M\to N$ be a $\<\rho',\rho\>$ homomorphism. Then we have
a homomorphism $f_{\Delta}:M_{\Delta}\to N_{\Delta}$.
The $\rho$ coinvariant $(M_{\Delta})_{\rho}$ of $M_{\Delta}$ is denoted by $M_{(\Delta,\rho)}$.
By applying the operation $(*)_{(\Delta,\rho)}$ to the above homomorphisms, 
we have the following homomorphisms
$$
H_2(E\times C,E\times \Sigma_1)_{(\Delta,\rho)}
\xleftarrow{\simeq}
H_2((E\times C)^{\widehat\ },E\times \Sigma_1)_{(\Delta,\rho)}
\xrightarrow{\lambda^0_*}
H_2(\widetilde{X},E_2)_{\rho},
$$
and the following commutative diagram:

$$
{\small
\begin{matrix}
0\to& \! \! \! \!H_2(E\! \times\!  C)_{(\Delta,\rho)}\! \! \! \! &\to&
\! \! \!\! H_2(E\! \times\!  C,E\! \times\!  \Sigma_1)_{(\Delta,\rho)}
\! \! \!\! &\to& \! \! \! \!
(H_1(E)\otimes H_0(\Sigma_1)^0)_{(\Delta,\rho)}
\! \! \! \!
&\to 0 
\\
 &
\lambda_1\downarrow \phantom{\lambda_1}
& & 
\lambda_2\downarrow \phantom{\lambda_2}
& & 
\lambda_3\downarrow \phantom{\lambda_3}
\\
0\to& H_2(\widetilde{X})_{\rho} &\to&
H_2(\widetilde{X},E_2)_{\rho}
&\to&
H_1(E_2)&\phantom{ .}\to 0 .
\end{matrix}
}
$$
Here $H_0(\Sigma_1)^0$ is the kernel of the map
$$
H_0(\Sigma_1)\to H_0(C).
$$
 \begin{proposition}
\label{prop:invariant in tensor}
\begin{enumerate}
\item
We have the following canonical isomorphisms:
\begin{align*}
&H_2(E\times C)_{(\Delta,\rho)}
\simeq H_1(E)\otimes_{\bold Z[\rho]}H_1(C), \\
&H_2(E\times C,E\times \Sigma_1)_{(\Delta,\rho)}
\simeq H_1(E)\otimes_{\bold Z[\rho]}
H_1(C,\Sigma_1), \\
&(H_1(E)\otimes H_0(\Sigma_1)^0)_{(\Delta,\rho)}
\simeq H_1(E)\otimes_{\bold Z[\rho]} H_0(\Sigma_1)^0.
\end{align*}
\item
The homomorphisms $\lambda_1,\lambda_2$ and $\lambda_3$ are injective 
and their images are equal to
$(1-\rho)H_2(\widetilde{X})_{\rho}$,
$(1-\rho)H_2(\widetilde{X},E_2)_{\rho}$ and
$(1-\rho)H_1(E_2)$, respectively.
\end{enumerate}
 \end{proposition}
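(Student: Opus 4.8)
The plan is to prove (1) by the Künneth formula together with an unwinding of the operation $(*)_{(\Delta,\rho)}$, and (2) by exploiting that $\lambda^{0}$ is, up to the algebraic resolution and contraction data of diagram (\ref{blow up and quotient E times C}), the quotient by the free order-three action of $\langle\hat\rho\rangle$, combined with the lattice identity $T_X=(1-\rho)T_X^{*}$ of Proposition \ref{prop K3 and ExC transcendental}.

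For part (1) I would begin from the Künneth decomposition
$$H_2(E\times C)=\big(H_2(E)\otimes H_0(C)\big)\oplus\big(H_1(E)\otimes H_1(C)\big)\oplus\big(H_0(E)\otimes H_2(C)\big),$$
and its relative analogue for $(E\times C,E\times \Sigma_1)$, where $H_0(C,\Sigma_1)=0$ (as $\Sigma_1\neq\emptyset$ and $C$ is connected) and $H_2(C,\Sigma_1)\cong H_2(C)$. On the two outer summands $\rho$ (equivalently $\rho'$, once $\hat\rho$ has been made trivial) acts trivially on the point and fundamental classes, so these summands are already $\hat\rho$-invariant and equal to their own $\rho$-invariant part; since $(\cdot)_\rho$ is the quotient by the $\rho$-invariants, they are annihilated by $(\cdot)_{(\Delta,\rho)}$. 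On the middle summand $H_1(E)\otimes H_1(C)$ the step $(\cdot)_\Delta$ is, by definition, the quotient by $(\hat\rho-1)$, and because $\hat\rho={\rho'}^{-1}\otimes\rho$ this is exactly the relation defining $H_1(E)\otimes_{\bold Z[\rho]}H_1(C)$. By Proposition \ref{action of rho on curve total dim} and the exact sequence (\ref{exact mixed Hodge curve case}), the modules $H_1(C)$, $H_1(C,\Sigma_1)$ and $H_0(\Sigma_1)^0$ are free $\bold Z[\rho]$-modules, so these tensor products are again free $\bold Z[\rho]$-modules; as $1-\rho$ is a non-zero-divisor in the domain $\bold Z[\rho]$, such a module has no $\rho$-invariants, and the final step $(\cdot)_\rho$ does nothing. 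This yields all three isomorphisms, the third being the case in which only an already pure tensor is present.

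For part (2) the key point is that via diagram (\ref{blow up and quotient E times C}) the map $\lambda^0$ is birational to the quotient of $(E\times C)^{\widehat{\ }}$ by the free order-three action of $\langle\hat\rho\rangle$; the exceptional and product divisors are algebraic and lie in the $\rho$-invariant (Néron--Severi) part, so they do not affect the coinvariant computation, and on the transcendental level I may replace $\widehat X,(E\times C)^{\widehat{\ }}$ by $\widetilde X,E\times C$. For a free degree-three cover $q$ one has $q^{*}q_{*}=1+\hat\rho+\hat\rho^{2}$; after passing to $\hat\rho$-coinvariants this becomes multiplication by $3$. Hence if $q_{*}=\lambda_1$ kills a class $x$ on the free source $H_1(E)\otimes_{\bold Z[\rho]}H_1(C)$, then $3x=q^{*}q_{*}(x)=0$, so $x=0$ by freeness; since the image lies in the transcendental part where $\rho$ has no invariants, the further passage to $H_2(\widetilde X)_\rho$ does not alter injectivity. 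The same degree-three argument applied to the fibre $E_2\cong E$ gives injectivity of $\lambda_3$, and $\lambda_2$ is then injective by the five lemma applied to the displayed commutative diagram with exact rows.

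It remains to identify the images, and this is the step I expect to be the main obstacle, since here the fine lattice structure enters rather than a soft transfer argument. For $\lambda_1$ I would use Corollary \ref{cor to lattice str of T} and Proposition \ref{prop K3 and ExC transcendental}(\ref{compare tr lat for ExC and X}): the Gysin map $\lambda_{*}$ surjects onto $T_X$, so under the Poincaré-duality identification of the transcendental part of $H_2(\widetilde X)_\rho$ with $T_X^{*}$ the image of $\lambda_1$ is precisely $T_X$, and the identity $(1-\rho)T_X^{*}=T_X$ rewrites this as $(1-\rho)H_2(\widetilde X)_\rho$. For $\lambda_3$, a direct computation identifies the generator $\Delta_E\otimes(1-\rho)p_1$ with $(1-\rho)$ times the class of $E_2\cong E$, giving image $(1-\rho)H_1(E_2)$. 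Finally, since both outer images are obtained by applying $(1-\rho)$ to the corresponding target, a snake-lemma chase on the commutative diagram with exact rows forces the image of $\lambda_2$ to equal $(1-\rho)H_2(\widetilde X,E_2)_\rho$. The delicate matter throughout is to keep the two distinct quotient conventions straight --- $(\cdot)_\Delta$ being a quotient by $(\hat\rho-1)$ while $(\cdot)_\rho$ is a quotient by the invariants --- and to confirm that replacing $\widehat X,(E\times C)^{\widehat{\ }}$ by $\widetilde X,E\times C$ genuinely leaves the transcendental coinvariants unchanged.
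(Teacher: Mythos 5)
Your proposal is correct and follows essentially the same route as the paper: part (1) is the paper's ``direct computation'' made explicit via K\"unneth, and part (2) rests on exactly the same ingredients --- the surjection $\lambda_*\colon T_{E\times C}\to T_X$ of Corollary \ref{cor to lattice str of T} together with $(1-\rho)T_X^*=T_X$ for $\lambda_1$, the generator $\Delta_E\otimes(1-\rho)$ for $\lambda_3$, and a deduction of the $\lambda_2$ statement from the other two via the commutative diagram. Your transfer argument $q^*q_*=1+\hat\rho+\hat\rho^2$ for injectivity is a mild elaboration of what the paper leaves implicit (there injectivity follows from the source and image both being free $\bold Z[\rho]$-modules of the same rank), and your diagram chase for the image of $\lambda_2$ is at the same level of detail as the paper's one-line assertion; if you wanted to tighten that step, the clean input is that $\operatorname{Im}\lambda_2\subseteq(1-\rho)H_2(\widetilde{X},E_2)_{\rho}$ already holds because every element of $H_1(E)\otimes_{\bold Z[\rho]}H_1(C,\Sigma_1)$ is of the form $\Delta_E\otimes\gamma=(1-\rho')\delta_E\otimes\gamma$.
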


\begin{proof}
(1)
The isomorphisms are obtained by a direct computation using the 
structures of $H^1(C)$ and $H^1(E)$ given
in Proposition \ref{action of rho on curve total dim}.

(2)
Via the Poincar\'e duality,
the map $\lambda_*$ is identified with
$1+\hat\rho^*+(\hat\rho^2)^*$ on $H^2(\widetilde{X})$, and
the image of the homomorphism $\lambda_1$ is equal to $T_X$
by Corollary \ref{cor to lattice str of T}.
Since each component of exceptional divisors from $A_2$
singularities is fixed under the action of $\rho$, we
have
$H_2(\widetilde{X})^\rho=S_X$ and
$
H_2(\widetilde{X})_\rho=T_X^*.
$
By Proposition \ref{prop K3 and ExC transcendental}
(\ref{compare tr lat for ExC and X}), 
we have $(1-\rho)T_X^*=T_X$ and
the image of $\lambda_*$ is
$(1-\rho)H_2(\widetilde{X})_{\rho}$
under the natural map $T_X\to H_2(\widetilde{X})_{\rho}$.
The module 
$H_1(E)\otimes H_0(\Sigma_1)^0$ is generated by $\Delta_E\otimes(1-\rho)$
as a $\<\rho', \rho\>$-module.
Its image in $(H_1(E)\otimes H_0(\Sigma_1)^0)_{\Delta}$ is equal to 
$(\rho'-1)\Delta_E\otimes 1$.
Thus the image is isomorphic to $(1-\rho)H_1(E_2)$. The statement for
$\lambda_2$ follows from those of $\lambda_1$ and $\lambda_3$.
\end{proof}
We use the following isomorphisms to compute the intersection
form on $T_X$.
$$
\begin{matrix}
 (1+\hat{\rho} +\hat{\rho}^2)H^1(E)\otimes H^1(C) 
&\xleftarrow[\simeq]{\lambda^*} &T_X
\\
\\
(H_1(E)\otimes H_1(C))_{\rho} &\xrightarrow[\simeq]{\lambda_*} & 
T_X& \subset T^*_X\simeq H_2(\widetilde{X})_{\rho}.
\end{matrix}
$$
The following proposition is used to define markings for
K3 surfaces.
\begin{proposition}
\label{hermitian form using delta E beta}
Let $m,n$ be elements in $H_1(C)$.
We use the identification $H_1(C)\simeq \bold Z[\rho]^2$ given in Proposition 
\ref{action of rho on curve total dim}.
\begin{enumerate}
\item
Let $m', n'$ be elements $T_X$ defined by
$m'=\lambda_*(\Delta_E\otimes m),n'=\lambda_*(\Delta_E\otimes n)$.
Then we have
$$
\<m',n'\>_{X}=2\Re(mU^t\overline{n}).
$$
\item
Let $m'', n''$ be elements $T_X^*=H_2(\widetilde{X})_\rho$ defined by
$m''=\lambda_*(\delta_E\otimes m),n''=\lambda_*(\delta_E\otimes n)$.
Then we have
$$
\<m'',n''\>_{X}=\dfrac{2}{3}\Re(mU^t\overline{n}).
$$
\end{enumerate}
Here the matrix $U$ is given in (\ref{equ:definition of matrix U}).
 \end{proposition}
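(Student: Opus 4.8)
The plan is to reduce the computation of the two intersection forms to the intersection form on $H_1(E)\otimes H_1(C)$, which we already understand via Proposition \ref{action of rho on curve total dim}, and then transport it through the isomorphisms
$$
T_X\xleftarrow[\simeq]{\lambda_*}(H_1(E)\otimes H_1(C))_{\rho}
$$
recorded just before the statement. The key structural input is that $\lambda^*$ (equivalently $\lambda_*$ via Poincar\'e duality) is compatible, up to the degree-$3$ factor coming from the trace map, with the intersection forms; this is exactly Proposition \ref{prop K3 and ExC transcendental}(3), which gives $\<\ ,\ \>_{E\times C}=3\<\ ,\ \>_T$. So the arithmetic of the problem lives on the product $E\times C$, where the pairing is a tensor product of the two cup products.

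First I would pin down the relevant pairings on the factors. On $H_1(C)\simeq \bold Z[\rho]^2$, Proposition \ref{action of rho on curve total dim} gives
$$
\<m,n\>_C=\frac{2}{3}\Re\bigl((\rho-\overline{\rho})\,mU{}^t\overline{n}\bigr),
$$
and on $H_1(E)\simeq \bold Z[\rho']$ it gives $\<m',n'\>_E=\frac{2}{3}\Re\bigl((\rho'-\overline{\rho'})\,m'\overline{n'}\bigr)$. Since $\Delta_E$ generates $H_1(E)$ as a $\bold Z[\rho']$-module, I would compute $\<\Delta_E,\Delta_E\>_E$ (or the relevant self-pairing governing $\Delta_E\otimes m$ against $\Delta_E\otimes n$) explicitly; the factor $\rho'-\overline{\rho'}=\sqrt{-3}$ is what will eventually cancel the $\sqrt{-3}$ implicit in the $(\rho-\overline{\rho})$ on the curve side, producing the clean real form $\Re(mU{}^t\overline{n})$. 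The tensor-product structure of the pairing on $T_{E\times C}=H^1(E)\otimes H^1(C)$ (stated in the definition preceding the proposition) means $\<\Delta_E\otimes m,\Delta_E\otimes n\>$ factors as a product of the $E$-pairing and the $C$-pairing, so the whole computation is bookkeeping with the two formulas above together with the normalization $\<\ ,\ \>_{E\times C}=3\<\ ,\ \>_T$.

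For part (1), the element $\Delta_E\otimes m$ lands in $T_X$ under $\lambda_*$, so I would combine the tensor factorization with the degree-$3$ comparison to get $\<m',n'\>_X=\frac{1}{3}\<m',n'\>_{E\times C}=2\Re(mU{}^t\overline{n})$, the three going into the product of the two $\frac{2}{3}$'s and the $\sqrt{-3}$ factors. For part (2), the only change is that $\delta_E$ replaces $\Delta_E$; since $\Delta_E=(1-\rho')\delta_E$, passing from $\Delta_E$ to $\delta_E$ divides the relevant pairing by $|1-\rho'|^2=3$ (equivalently, $\delta_E\otimes m$ lands in $T_X^*=H_2(\widetilde X)_{\rho}$ rather than in $T_X$, and $(1-\rho)T_X^*=T_X$ by Proposition \ref{prop K3 and ExC transcendental}(4)), which produces precisely the extra factor of $\frac{1}{3}$ and yields $\<m'',n''\>_X=\frac{2}{3}\Re(mU{}^t\overline{n})$.

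The main obstacle I anticipate is tracking the numerical constants correctly through the chain of normalizations: there is the $\frac{2}{3}$ in the curve pairing, the $\frac{2}{3}$ in the elliptic pairing, the factor $3$ from $\<\ ,\ \>_{E\times C}=3\<\ ,\ \>_T$, and the factor $3=|1-\rho'|^2$ separating $\delta_E$ from $\Delta_E$. Getting these to collapse to the stated $2$ and $\frac{2}{3}$ requires care, in particular handling the conjugate $\bold Z[\rho]$-module structure on $H^1(E)$ (where $\rho$ acts as $\overline{\rho'}$, so that $\rho'-\overline{\rho'}$ and $\rho-\overline{\rho}$ enter with a definite relative sign) and checking that the real parts combine so that the imaginary $\sqrt{-3}$ contributions pair up to give a real-valued Hermitian form $\Re(mU{}^t\overline{n})$ rather than something with a spurious $\sqrt{-3}$. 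Once the constant bookkeeping is settled, both identities follow from the single tensor-product computation.
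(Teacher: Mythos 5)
Your overall strategy --- transport everything to $E\times C$, use the tensor--product structure of the pairing on $H^1(E)\otimes H^1(C)$, and reduce part (2) to part (1) via $\delta_E=\tfrac{1}{1-\rho'}\Delta_E$ (a factor $|1-\rho'|^2=3$) --- is the same as the paper's. But there is a genuine gap in the middle step. The quantity you propose to compute, the pure--tensor pairing $\<\Delta_E\otimes m,\Delta_E\otimes n\>=-\<\Delta_E,\Delta_E\>_E\cdot\<m,n\>_C$, is identically zero: the cup product on $H^1$ of a curve is alternating, and indeed Proposition \ref{action of rho on curve total dim} gives $\<\Delta_E,\Delta_E\>_E=\tfrac{2}{3}\Re(\rho'-\overline{\rho'})=0$. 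The nonzero answer comes entirely from the projection formula
$$
\<\lambda_*(\Delta_E\otimes m),\lambda_*(\Delta_E\otimes n)\>_X
=\<\Delta_E\otimes m,\ \lambda^*\lambda_*(\Delta_E\otimes n)\>
=\<\Delta_E\otimes m,\ (1+\hat\rho+\hat\rho^2)(\Delta_E\otimes n)\>,
$$
whose two \emph{off-diagonal} terms involve $\<\Delta_E,\rho'\Delta_E\>_E=1$ and $\<\Delta_E,{\rho'}^2\Delta_E\>_E=-1$ paired against $\<m,\rho^2 n\>_C$ and $\<m,\rho n\>_C$; these combine to $-\tfrac{2}{3}\Re((\omega+\omega^2-2)mU{}^t\overline{n})=2\Re(mU{}^t\overline{n})$. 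Your appeal to Proposition \ref{prop K3 and ExC transcendental}(3) does not substitute for this: that statement compares forms via $\lambda^*$, and to apply it to $m'=\lambda_*(\Delta_E\otimes m)$ you must still identify $\lambda^*m'$ as the deck--group sum above, which is exactly the step your write-up omits.

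A secondary problem is the proposed mechanism for the constants. You suggest that the $\sqrt{-3}=\rho'-\overline{\rho'}$ in the $E$-pairing cancels the $\sqrt{-3}$ in the $C$-pairing to produce a real answer; but the tensor-product pairing is a product of two \emph{real numbers} $\tfrac{2}{3}\Re(\sqrt{-3}\,x)\cdot\tfrac{2}{3}\Re(\sqrt{-3}\,y)$, and $\Re(a)\Re(b)\neq\Re(ab)$, so no such cancellation occurs term by term. The clean expression $2\Re(mU{}^t\overline{n})$ only emerges after summing the three $\hat\rho$-translates. Once you insert the identity $\lambda^*\lambda_*=1+\hat\rho+\hat\rho^2$ and expand $(1+\hat\rho+\hat\rho^2)(\Delta_E\otimes n)=\Delta_E\otimes n+(\rho'\Delta_E)\otimes(\rho^2 n)+({\rho'}^2\Delta_E)\otimes(\rho n)$, the rest of your bookkeeping (including the factor $\tfrac{1}{3}$ for part (2)) goes through and reproduces the paper's proof.
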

\begin{proof}
(1) We use the following identities:
\begin{align*}
\<m',n'\>_X
&=\< \lambda_*(\Delta_E\otimes m),\lambda_*(\Delta_E\otimes n)\>_X
\\
&=\< \Delta_E\otimes m,\lambda^*\lambda_*(\Delta_E\otimes n)\>_X
\\
&=\< \Delta_E\otimes m,(1+\hat{\rho}+\hat{\rho}^2)(\Delta_E\otimes n)\>_X
\end{align*}
and
$$
(1+\hat{\rho}+\hat{\rho}^2)(\Delta_E\otimes n)=
\Delta_E \otimes n+
(\rho' \Delta_E) \otimes (\rho^2 n)+
({\rho'}^2 \Delta_E)\otimes (\rho n).
$$
By the definition of intersection form and the orientation of a 
fiber space, we have
$$
\<\Delta \otimes m, \Delta'\otimes m'\>_X=
-\<\Delta,\Delta'\>_E\cdot\<m,m'\>_C.
$$
The proposition follows from Proposition \ref{action of rho on curve total dim}
and direct computation.

Statement (2) follows from the equality $\delta_E=\dfrac{1}{1-\rho'}\Delta_E$
and statement (1). 
\end{proof}

By the duals of de Rham cohomologies, we have the following
isomorphism of de Rham cohomologies:
\begin{equation}
\label{isom for de Rham cohomology}
H^1_{dR}(E)\otimes_{\bold C[\rho]}
H^1_{dR,c}(C-\Sigma_1)\xrightarrow{\lambda^*}
H^2_{dR,c}(X-E_2)_{\rho}.
\end{equation}

\section{Period integral for K3 surfaces in $M \subset\Cal M$.}
\label{sec:perindo integrals if x1,x2 in M}
In this section, we study period integrals of 
a triple covering $X_{\overset{\to}\ell}$,
where $\overset{\to}\ell=(x_1,x_2)$ is an element in 
\begin{equation}
\label{real domain moduli of K3}
M=\{(x_1,x_2)\subset \Cal M\mid\ x_1, x_2\in \bold R,
0<x_1,x_2,x_1+ x_2<1\}.
\end{equation}
\subsection{Relative chains on K3 surfaces 
and period integrals}
\label{subsec:relative chain on K3}
We define relative 2-chains $\Gamma_1, \dots, 
\Gamma_4$ on $X$ by
\begin{align*}
\Gamma_1&=\{(p,q,z)\mid
 0\leq p\leq 1, 0\leq q\leq 1, z \in 
\bold e(0)\bold R_+\}, \\
\Gamma_2&=\{(p,q,z)\mid 
1\leq p, 1\leq q, x_1p+x_2q\leq 1,
 z \in \bold
 e(\frac{2}{3})\bold R_+\}, \\
\Gamma_3&=\{(p,q,z)\mid
p\leq 0, 0\leq q\leq 1, z \in
\bold e(\frac{1}{3} )\bold R_+\}, \\
\Gamma_4&=\{(p,q,z)\mid
 0\leq p\leq 1, q\leq 0,
 z \in 
\bold e(\frac{1}{3} )\bold R_+\};
\end{align*}
see Figure \ref{fig:2-cyccles}.
\begin{figure}[htb]
\includegraphics[width=7cm]{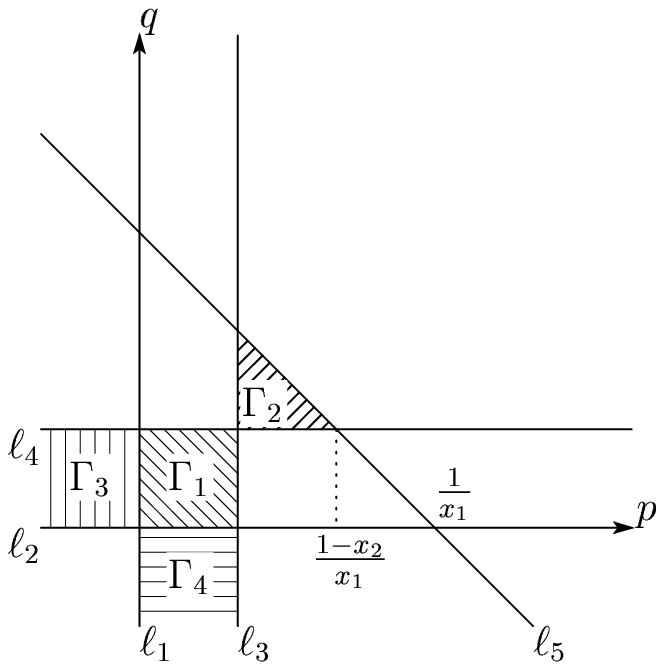}
\caption{Images of $\Gamma_i$ in $\bold P^2$}
\label{fig:2-cyccles}
\end{figure}
Then $(1-\rho)\Gamma_1, \dots, (1-\rho)\Gamma_4$
define elements in the relative homology 
$H_2(X,E_1\cup E_2)_{\rho}.$
A holomorphic two form $\Omega_X$ on $X-D$ given by
\begin{align*}
\Omega_X=&zdp\wedge dq\ \big(=p^{-\frac{2}{3}}q^{-\frac{2}{3}}
(1-p)^{-\frac{2}{3}}(1-q)^{-\frac{2}{3}}
(1-x_1 p-x_2q)^{-\frac{2}{3}}dp\wedge dq\big)
\end{align*}
becomes a global two form on $\widetilde{X}$, 
which is also denoted by $\Omega_X$.
Since the restriction of $\Omega_X$ to $E_1\cup E_2$
is zero,
it defines an element of 
the de Rham cohomology
$
H^2_{dR, c}(\widetilde{X}-E_1\cup E_2)
$ 
with compact support.
The natural pairing 
$$
\<\ ,\ \>:H_2(X,E_1\cup E_2)_{\rho} \otimes
H^2_{dR,c}(X-E_1\cup E_2)_{\rho} \to \bold C,
$$
is defined by period integrals. We define 
functions $\varphi_i$ on $M$ by
\begin{equation}
\label{integration by stnadard branch}
\varphi_i=\<\Gamma_i,\Omega_X \>=\int_{\Gamma_i}\Omega_X,
\quad (i=1,2,3,4).
\end{equation}
Then we have a map 
$$
M \ni \vec\ell\mapsto (\varphi_1,\dots, \varphi_4)
\in\bold C^4.
$$
This map is continued analytically to a multivalued holomorphic
map from $\Cal M$
to $\bold C^4$.

\begin{remark}
The integral $\varphi_i$ ($i=1,2,3,4$) 
satisfies Appell's hypergeometric system $F_2$ of differential equations 
with parameters
$(a,b_1,b_2,c_1,c_2)=(2/3,1/3,1/3,2/3,2/3)$. 
\end{remark}

\subsection{Comparison of Period integrals}
\label{subsec:comp of integrals}
In this subsection, we compute period integrals using the isomorphisms
in Proposition \ref{prop:invariant in tensor}.
Let $(x_1,x_2)$ be an element in $M$ defined in (\ref{real domain moduli of K3})
and set
$$
t=\dfrac{1-x_1-x_2}{(1-x_1)(1-x_2)}.
$$
Then we have
$0<t<1$ since
$0< x_1, x_2,x_1+x_2 <1$.
Let 
$\lambda:E^0\times C^0 \to {X'}^0 $ 
be a morphism defined in Proposition \ref{trivialization for monodromy covering}.
 \begin{proposition}
\begin{enumerate}
\item
The pull back of the differential form
  $\Omega_X$ 
under the covering map $\lambda$ in (\ref{quot isom from product})
is equal to
 \begin{align*}
  \lambda^*\Omega_X
=&(1-x_1)^{-\frac{1}{3}}
  (1-x_2)^{-\frac{1}{3}}
w'dr\wedge \psi_1
\\
=&(1-x_1)^{-\frac{1}{3}}
  (1-x_2)^{-\frac{1}{3}}
r^{-\frac{2}{3}}(1-r)^{-\frac{2}{3}}dr \\
&\wedge
u^{-\frac{2}{3}}(1-u)^{-\frac{1}{3}}
(1-tu)^{-\frac{1}{3}}du.
 \end{align*}
\item
  Via the isomorphism $\lambda$ in (\ref{quotient by order 3 action}), 
The chains $\Gamma_1,\Gamma_2, \Gamma_3$
correspond to
\begin{align*}
\Gamma_1' =& \{(ww',r,u)\in (E_{\omega}\times C)/\<\hat\rho\>
 \mid 0<r<1, -\infty<u<0, ww'\in \bold e(\dfrac{1}{2})\}, \\
\Gamma_2' =& \{(ww',r,u)\in (E_{\omega}\times C)/\<\hat\rho\>
 \mid
 1<r<\infty,\dfrac{1}{t}<u<\infty, ww'\in \bold e(\dfrac{1}{6}) \}, \\
\Gamma_3' =& \{(ww',r,u)\in (E_{\omega}\times C)/\<\hat\rho\>
 \mid
 0<r<1, 0<u<1-x_1, ww'\in \bold e(\dfrac{1}{3})\}.
\end{align*}
\end{enumerate}
 \end{proposition}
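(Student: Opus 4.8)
The plan is to handle the two parts by direct substitution, the first being a change of variables and the second a region-plus-branch analysis.

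For (1), I would first use $f'$ of (\ref{u as a function of r}) to write $p$ as a function of $u$ alone, $p=-u/(1-x_1-u)$, which immediately gives $1-p=(1-x_1)/(1-x_1-u)$ and $1-x_1p=(1-x_1)(1-u)/(1-x_1-u)$. Feeding this into $f$ of (\ref{eq:changevar1}) and setting $\Phi=x_2(1-x_1-u)r+(1-x_1)(1-x_2)(1-tu)$, a short computation produces $q=(1-x_1)(1-u)r/\Phi$ together with the factorizations $1-q=(1-x_1)(1-x_2)(1-tu)(1-r)/\Phi$ and $1-x_1p-x_2q=(1-x_1)^2(1-u)(1-x_2)(1-tu)/\big((1-x_1-u)\Phi\big)$. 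The only step here that is not bookkeeping is the numerator identity $\Phi-(1-x_1)(1-u)r=(1-x_1)(1-x_2)(1-tu)(1-r)$, which, after replacing $1-x_1-x_2$ by $t(1-x_1)(1-x_2)$ via (\ref{definition of t}), reduces to the elementary equality $x_2(1-x_1-u)-(1-x_1)(1-u)=-(1-x_1)(1-x_2)(1-tu)$.

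Next I would compute the Jacobian. Since $p=p(u)$ and $q=q(p,r)$, one has $dp\wedge dq=\frac{dp}{du}\frac{\partial q}{\partial r}\,du\wedge dr$ with $\frac{dp}{du}=-(1-x_1)/(1-x_1-u)^2$ and $\frac{\partial q}{\partial r}=(1-x_1)^2(1-u)(1-x_2)(1-tu)/\Phi^2$. Substituting the expression (\ref{quotient by order 3 action}) for $\lambda^{*}z$ together with the five factorizations above, every factor $(1-x_1-u)$ and $\Phi$ cancels, and the surviving powers of $(1-x_1),(1-x_2),u,(1-u),(1-tu),r,(1-r)$ collapse to the stated monomial; writing $w=u^{1/3}(1-u)^{2/3}(1-tu)^{-1/3}$ and $w'=r^{-2/3}(1-r)^{-2/3}$ from (\ref{def eq of ellipic curve with w-action}) then identifies the result with $(1-x_1)^{-1/3}(1-x_2)^{-1/3}w'\,dr\wedge\psi_1$. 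I would keep the branch of $(1-x_1)^{1/3}(1-x_2)^{1/3}$ fixed as in Proposition \ref{trivialization for monodromy covering} so the fractional powers are unambiguous.

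For (2), I would convert each $\Gamma_i$ into an $(r,u)$-box and then fix the phase of the $\hat\rho$-invariant coordinate $ww'$. The range of $u$ is read off from the fact that $p=-u/(1-x_1-u)$ is strictly decreasing: $0<p<1$, $p<0$ and $p>1$ give $u<0$, $0<u<1-x_1$ and (after imposing the remaining two inequalities) $1/t<u<\infty$, matching $\Gamma_1',\Gamma_3',\Gamma_2'$. The $q$-range and the condition $x_1p+x_2q\le 1$ are then translated into an $r$-range by reading the signs of $1-q$ and $1-x_1p-x_2q$ from the factorizations of part (1); this yields $0<r<1$ for $\Gamma_1',\Gamma_3'$ and $1<r<\infty$ for $\Gamma_2'$. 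Since $f\circ f'$ is an isomorphism onto $\bold U$ and each open region $\Gamma_i^{\circ}$ lies in $\bold U$, these open boxes map bijectively onto the interiors $\Gamma_i^{\circ}$. Finally the phase of $ww'$ is pinned down by demanding that $z$, as given by (\ref{quotient by order 3 action}), lie on the prescribed ray $\bold e(\cdot)\bold R_+$: evaluating the sign of each real factor of $z$ over the region shows $z$ is a positive multiple of $-ww'$ on $\Gamma_1'$, of $ww'$ on $\Gamma_3'$, and of $-ww'$ on $\Gamma_2'$, forcing $ww'\in\bold e(1/2),\ \bold e(1/3),\ \bold e(1/6)$ respectively.

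The hard part will be the phase bookkeeping in (2), and in particular the region $\Gamma_2$: its image triangle in $\bold P^2$ has the vertex $\ell_4\cap\ell_5=((1-x_2)/x_1,1)$ on the excluded locus $p=(1-x_2)/x_1$ (equivalently $u=1/t$), where $f\circ f'$ degenerates, so one must argue on the open region and verify that the three conditions $p>1,\ q>1,\ x_1p+x_2q<1$ simultaneously cut out exactly $\{1<r<\infty,\ 1/t<u<\infty\}$. Throughout, the analytic continuations of the cube roots defining $w,w'$ and $(1-x_1)^{1/3}(1-x_2)^{1/3}$ must be kept consistent with the branch used to define $z$ in Proposition \ref{trivialization for monodromy covering} and with the rays $\bold e(\cdot)\bold R_+$ that fix the chains; this consistency, rather than any individual computation, is where the care lies.
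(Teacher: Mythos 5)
The paper states this proposition without any proof, treating both parts as direct verifications, and your computation supplies exactly that: the factorizations of $1-p$, $q$, $1-q$, $1-x_1p-x_2q$ in terms of $\Phi$, the Jacobian, and the resulting cancellation all check out (the final sign works out once $du\wedge dr$ is flipped to $dr\wedge du$), and your sign analysis of $z/(ww')$ on each region yields precisely the phases $\mathbf e(1/2),\mathbf e(1/6),\mathbf e(1/3)$. Your proposal is correct and is the argument the authors implicitly intend.
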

We define $1$-chains $\gamma_i$ on $C$ 
by
 \begin{align*}
\gamma_1&=\{(w,u)\in C\mid u\in
  (-\infty,0), w\in \bold e(\dfrac{1}{2})\}, \\
\gamma_2&=\{(w,u)\in C\mid u\in
  (\dfrac{1}{t},\infty), w\in \bold e(\dfrac{1}{6})\}, \\
\gamma_3&=\{(w,u)\in C\mid u\in
  (0,1-x_1), w\in \bold e(\dfrac{1}{3})\}.
 \end{align*}
Then by the covering $\lambda:E\times C \to (E\otimes C)/\<\hat\rho\>$,
we have
\begin{equation}
\label{relation between chain and products}
\Gamma_i'=\lambda(\delta_E\otimes \gamma_i) \quad (i=1,2,3),
\end{equation}
where $\delta_E$ is defined in \S \ref{relative homology explicit form}.
Using the above relation, we have the following theorem.
 \begin{theorem}
\label{thm:second specialization identity 1}
The period integrals $\varphi_i$ $(i=1,2,3)$ are expressed as
$$
  \varphi_i(x_1,x_2)=(1-x_1)^{-\frac{1}{3}}
  (1-x_2)^{-\frac{1}{3}}
B(\dfrac{1}{3},\dfrac{1}{3})\ \varphi'_i(t)
$$
  for $i=1,2,3$,
where
  $\varphi'_i=\varphi'_i(t)$
is
$$
\varphi_i'(t)=\int_{\gamma_i}
u^{-\frac{2}{3}}(1-u)^{-\frac{1}{3}}
(1-tu)^{-\frac{1}{3}}du.
$$
 \end{theorem}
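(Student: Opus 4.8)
The plan is to transport the period integral $\varphi_i=\int_{\Gamma_i}\Omega_X$ through the trivialization $\lambda$ of Proposition \ref{trivialization for monodromy covering} and then to exploit the product structure of the pulled-back chain. First I would invoke the correspondence between $\Gamma_i$ and $\Gamma_i'$ recorded in the preceding proposition, together with the identity (\ref{relation between chain and products}), which exhibits $\Gamma_i'=\lambda(\delta_E\otimes\gamma_i)$ as the image of the product chain $\delta_E\otimes\gamma_i$ in $E\times C$. Since $\lambda$ induces the isomorphism (\ref{quot isom from product}) from $(E^0\times C^0)/\langle\hat\rho\rangle$ onto ${X'}^0$, integrating $\Omega_X$ over $\Gamma_i$ coincides with integrating the pullback $\lambda^*\Omega_X$ over the single lifted chain $\delta_E\otimes\gamma_i$; because this lift is chosen inside one fundamental domain for $\langle\hat\rho\rangle$, no factor equal to the covering degree intervenes.

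Next I would substitute the explicit formula for $\lambda^*\Omega_X$ from part (1) of the preceding proposition,
$$\lambda^*\Omega_X=(1-x_1)^{-\frac13}(1-x_2)^{-\frac13}\,r^{-\frac23}(1-r)^{-\frac23}\,dr\wedge u^{-\frac23}(1-u)^{-\frac13}(1-tu)^{-\frac13}\,du.$$
The integrand is a wedge of a one-form in $r$ with a one-form in $u$, and the domain $\delta_E\otimes\gamma_i$ is a genuine product of a path in the $r$-variable with a path in the $u$-variable. Since all exponents exceed $-1$, both one-variable integrals converge absolutely, so Fubini's theorem splits $\varphi_i$ into the product of an $r$-integral over $\delta_E$ and a $u$-integral over $\gamma_i$, with the scalar $(1-x_1)^{-\frac13}(1-x_2)^{-\frac13}$ pulled out.

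Finally I would evaluate each factor. Along $\delta_E$ one has $0\le r\le 1$ with the positive real branch $w'\in\bold R_+$ dictated by the defining equation $w'^3=r^{-2}(1-r)^{-2}$ of $E$ in (\ref{def eq of ellipic curve with w-action}), so the $r$-integral is the classical Beta integral $\int_0^1 r^{-\frac23}(1-r)^{-\frac23}\,dr=B(\tfrac13,\tfrac13)$. The surviving $u$-integral over $\gamma_i$ is, by its very definition, $\varphi_i'(t)$. Assembling the three pieces gives the asserted identity for $i=1,2,3$.

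The one genuinely delicate point, and the step I expect to require the most care, is the bookkeeping of branches and orientations: the cube roots entering $\Omega_X$, $w$ and $w'$ must be tracked consistently so that the chosen lift of $\Gamma_i$ is exactly $\delta_E\otimes\gamma_i$ with the correct phase, guaranteeing that the constant is precisely $B(\tfrac13,\tfrac13)$ with no extraneous root of unity. All of this is controlled by the prescribed arguments of $w$ on each $\gamma_i$, the choice $w'\in\bold R_+$ on $\delta_E$, and the branch of $(1-x_1)^{1/3}(1-x_2)^{1/3}$ fixed before Proposition \ref{trivialization for monodromy covering}; once those choices are matched, the remainder is the mechanical factorization above.
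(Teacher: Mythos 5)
Your proposal is correct and follows essentially the same route as the paper: the paper likewise derives the theorem from the preceding proposition (the formula for $\lambda^*\Omega_X$ and the correspondence $\Gamma_i\leftrightarrow\Gamma_i'$) together with the relation $\Gamma_i'=\lambda(\delta_E\otimes\gamma_i)$, then factors the resulting product integral into the Beta integral $B(\tfrac13,\tfrac13)$ over $\delta_E$ and the hypergeometric integral $\varphi_i'(t)$ over $\gamma_i$. Your explicit attention to the single lift within a fundamental domain of $\langle\hat\rho\rangle$ and to the branch bookkeeping fills in details the paper leaves implicit.
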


The computation of the integral $\varphi_4(x_1,x_2)$
is reduced to that of $\varphi_3$ by 
exchanging the parameters $x_1 \leftrightarrow x_2$, 
and the variables $p\leftrightarrow q$.
We have
\begin{align*}
 \varphi_4= &B(\dfrac{1}{3},\dfrac{1}{3})
 (1-x_1)^{-\frac{1}{3}}(1-x_2)^{-\frac{1}{3}}
 \varphi_4',
 \end{align*}
where
\begin{align*}
\varphi_4'&=
\int_{\gamma_4}
u^{-\frac{2}{3}}
(1-u)^{-\frac{1}{3}}
(1-tu)^{-\frac{1}{3}}
du, \\
\gamma_4&=\{(w,u) \in C \mid u\in (0,{1-x_2}),
w\in \bold e(\dfrac{1}{3})\}.
 \end{align*}
\subsection{Relations between cycles $\{\gamma_i\}$ and $\{\beta_i\}$ in $C$}
\label{sec: rel between cycles}
We give relations between the bases
$\{\gamma_i\}$ and $\{\beta_i\}$ of
$H_1(C,\Sigma_1\cup
\Sigma_2)_{\rho}$.
The action on $H_1(C,\Sigma_1\cup
\Sigma_2)_{\rho}$ induced by $\rho$ is also denoted as
$\rho$.
We define points $P_0, P_1, P_t$ and $P_{\infty}$
in $(u,w)\in C$ by
\begin{equation}
\label{eq:ramification-pts}
P_0=(0,0),\quad P_1=(1,0),\quad P_t=(1/t,\infty),\quad 
P_\infty=(\infty,\infty).
\end{equation}
Paths connecting $P_0$ with
$P_1, P_t$ and $P_\infty$ in the first sheet are denoted 
by $\frakl_1$, $\frakl_t$ and $\frakl_\infty$, respectively.
Then we have
\begin{equation}
\label{homological identity 1}
(1-\rho^2)\frakl_1=\beta_1,\quad (1-\rho)\frakl_t=\beta_2,\quad 
(1-\rho)\frakl_\infty=-\rho^2\beta_1+\beta_2.
\end{equation}
Recall that $\frakl_{1-x_1}$ and $\frakl_{1-x_2}$   
are paths in the first sheet from $P_0$ to the point with $u=1-x_{1}$ 
and that with $u=1-x_2$, respectively.
The paths $l_{1-x_1}$ and $l_{1-x_1}$ and cycles 
$\beta_1,\beta_2,\beta_4$ satisfy the relations in (3.10).
\begin{proposition}
\label{def:marked-configuration}
In 
$
H_1(C,\Sigma_1\cup
\Sigma_2)_{\rho}$, we have the following identities:
\begin{align}
\label{equation gamma1-4}
 \beta_1&=\rho(1-\rho^2)\gamma_2,\quad
 \beta_2=(1-\rho^2)\gamma_1
+(1-\rho^2)\gamma_2,\\  
\nonumber
\beta_3&=(1-\rho)\gamma_3,\quad
\beta_4=(1-\rho^2)\gamma_4
-\rho^2(1-\rho^2)\gamma_2,
\end{align}
where $\gamma_i$ is defined in \S \ref{subsec:comp of integrals}.
\end{proposition}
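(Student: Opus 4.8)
The identities are to be read in $H_1(C,\Sigma_1\cup\Sigma_2)_{\rho}$, which is a $\bold Z[\rho]$-module and hence carries the relation $\rho^2+\rho+1=0$. The plan is therefore to split the argument into a topological step and an algebraic step: first reduce each $\gamma_i$ to the first-sheet paths $\frakl_1,\frakl_t,\frakl_\infty,\frakl_{1-x_1},\frakl_{1-x_2}$ by a sheet computation, and then substitute the homological identities (\ref{homological identity 1}) and (\ref{homological identity 2}) and simplify in $\bold Z[\rho]$.

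For the topological step I would track the argument of $w=u^{1/3}(1-u)^{2/3}(1-tu)^{-1/3}$ along the real $u$-interval defining each $\gamma_i$, and compare the prescribed ray $\bold e(\cdot)\bold R_+$ with the value of $w$ on the first-sheet path lying over the same interval; since $\rho$ multiplies $w$ by $\omega$, the discrepancy records a power of $\rho$. Because $0<1-x_1,1-x_2<1$, the intervals of $\gamma_3$ and $\gamma_4$ stay inside $(0,1)$, where the first sheet is $w\in\bold R_+$ and coincides with $\frakl_{1-x_1},\frakl_{1-x_2}$; as these chains carry $w\in\bold e(1/3)\bold R_+=\omega\bold R_+$ one obtains at once $\gamma_3=\rho\,\frakl_{1-x_1}$ and $\gamma_4=\rho\,\frakl_{1-x_2}$. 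For $\gamma_1$ and $\gamma_2$, whose intervals $(-\infty,0)$ and $(1/t,\infty)$ force the continuation of $w$ across the branch points, the same bookkeeping (with the half-plane of continuation fixed by Figure \ref{fig:cycle}) should give
$$
\gamma_1=-\rho\,\frakl_\infty,\qquad \gamma_2=\rho(\frakl_\infty-\frakl_t),
$$
the second expressing that a path from $P_t$ to $P_\infty$ is homologous to $\frakl_\infty-\frakl_t$ after the sheet shift $\rho$.

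Granting these four relations, the algebraic step is routine. Substituting $\gamma_3=\rho\,\frakl_{1-x_1}$ into (\ref{homological identity 2}) and using commutativity of $\bold Z[\rho]$ gives $(1-\rho)\gamma_3=\rho(1-\rho)\frakl_{1-x_1}=\beta_3$. Subtracting the last two relations of (\ref{homological identity 1}) yields $(1-\rho)(\frakl_t-\frakl_\infty)=\rho^2\beta_1$; combining this with $\gamma_2=\rho(\frakl_\infty-\frakl_t)$ and the simplifications $\rho(1-\rho^2)=\rho-1$ and $\rho^2(1-\rho^2)=\rho^2-\rho$ (valid since $\rho^3=1$) gives $\rho(1-\rho^2)\gamma_2=\rho(1-\rho)(\frakl_t-\frakl_\infty)=\rho^3\beta_1=\beta_1$, the first identity. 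Using in addition $\gamma_1=-\rho\,\frakl_\infty$ and $(1-\rho)\frakl_\infty=\beta_2-\rho^2\beta_1$, one finds $(1-\rho^2)\gamma_1=\beta_2-\rho^2\beta_1$ and $(1-\rho^2)\gamma_2=\rho^2\beta_1$, whose sum is $\beta_2$, the second identity. Finally, applying $(1-\rho^2)$ to $\gamma_4=\rho\,\frakl_{1-x_2}$ and invoking (\ref{homological identity 2}) gives $(1-\rho^2)\gamma_4=\beta_4+\rho\beta_1$, and substituting $\rho\beta_1=\rho^2(1-\rho^2)\gamma_2$ from the first identity yields $\beta_4=(1-\rho^2)\gamma_4-\rho^2(1-\rho^2)\gamma_2$.

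The main obstacle is the sheet analysis for $\gamma_1$ and $\gamma_2$: fixing the exact powers of $\rho$ and the signs coming from orientation when $w$ is continued across the branch points at $0$ and $1/t$, and --- for $\gamma_2$ --- verifying that $\gamma_2$ and $\rho(\frakl_\infty-\frakl_t)$ agree in relative homology and not merely up to an absolute cycle. The cases $\gamma_3,\gamma_4$ are immediate precisely because their paths never leave $(0,1)$; it is the crossing of branch points, together with the choice of half-plane fixed by the figure, that demands care, since a single misplaced power of $\omega$ or a reversed orientation would break the algebraic identities above.
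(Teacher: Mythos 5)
Your proof follows essentially the same route as the paper's: the paper likewise reduces each $\gamma_i$ to the first-sheet paths via the relations $(1-\rho)\gamma_1=-(1-\rho)\rho\frakl_{\infty}$, $(1-\rho)\gamma_2=(1-\rho)\rho(\frakl_{\infty}-\frakl_t)$, $(1-\rho)\gamma_3=(1-\rho)\rho\frakl_{1-x_1}$, $(1-\rho)\gamma_4=(1-\rho)\rho\frakl_{1-x_2}$, and then substitutes (\ref{homological identity 1}) and (\ref{homological identity 2}); your $\bold Z[\rho]$-algebra checks out. The only difference is that the paper asserts these intermediate relations only after multiplication by $(1-\rho)$, so that both sides are genuine (relative) cycles --- which is all the final identities need, since they only involve $(1-\rho)\gamma_i$ and $(1-\rho^2)\gamma_i$ --- whereas you state the unreduced identities $\gamma_1=-\rho\frakl_\infty$ etc.; this is harmless but slightly stronger than required, and you correctly identify the sheet bookkeeping for $\gamma_1,\gamma_2$ as the one step needing care (the paper does not spell it out either).
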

 \begin{proof}
The above identities follow from
(\ref{homological identity 2})
and the identities
\begin{align*}
& (1-\rho)\gamma_1=-(1-\rho)\rho\frakl_{\infty},\quad
 (1-\rho)\gamma_2=(1-\rho)\rho(\frakl_{\infty}-\frakl_t),
\\
& (1-\rho)\gamma_3=(1-\rho)\rho\frakl_{1-x_1},\quad
 (1-\rho)\gamma_4=(1-\rho)\rho\frakl_{1-x_2}
\end{align*}
in $H_1(C,\Sigma_1\cup \Sigma_2)$.
 \end{proof}

\begin{proposition}
\label{corresondence of cycles in ExC and X generators}
We set
$$
B_i^*=\lambda(\delta_E\otimes \beta_i)\quad (i=1,2,3).
$$
Then $H_2(\widetilde{X},E_2)_{\rho}$ is freely generated by
$B_1^*, B_2^*$ and $B_3^*$.
We have the following relations
between $B_1^*, B_2^*, B_3^*$ and $\Gamma_1, \Gamma_2, \Gamma_3$:
\begin{align*}
B_1^*&=\rho(1-\rho^2)\Gamma_2,\ 
B_2^*=(1-\rho^2)\Gamma_1
+(1-\rho^2)\Gamma_2,\ 
B_3^*=(1-\rho)\Gamma_3.
\end{align*}
As a consequence,
$H_2(\widetilde{X},E_2)_{\rho}$ is freely generated by
$(1-\rho)\Gamma_1,(1-\rho)\Gamma_2$ and $(1-\rho)\Gamma_3$.
\end{proposition}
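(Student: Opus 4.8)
The plan is to derive the three displayed relations directly from the curve-level identities already in hand, and then to prove the basis statement by comparing with the transcendental lattice and the boundary exact sequence. First I would note that the assignment $c\mapsto\lambda(\delta_E\otimes c)$ is $\mathbf{Z}[\rho]$-linear on $H_1(C,\Sigma_1)_\rho$: this is precisely the $\rho$-equivariance of $\lambda$ coming from the relation $1\otimes\rho=\rho'\otimes 1$ in the $\hat\rho$-quotient (Proposition \ref{trivialization for monodromy covering}), so that $\lambda(\delta_E\otimes\rho c)=\rho\,\lambda(\delta_E\otimes c)$. Feeding the identities $\beta_1=\rho(1-\rho^2)\gamma_2$, $\beta_2=(1-\rho^2)\gamma_1+(1-\rho^2)\gamma_2$, $\beta_3=(1-\rho)\gamma_3$ of Proposition \ref{def:marked-configuration} into this map and using $\lambda(\delta_E\otimes\gamma_i)=\Gamma_i$ from (\ref{relation between chain and products}) produces the three relations for $B_1^*,B_2^*,B_3^*$ at once.

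To see that $B_1^*,B_2^*,B_3^*$ form a $\mathbf{Z}[\rho]$-basis I would work with the boundary exact sequence
\begin{equation*}
0\to H_2(\widetilde X)_\rho\to H_2(\widetilde X,E_2)_\rho\xrightarrow{\partial}H_1(E_2)\to 0
\end{equation*}
of (\ref{fundamental exact sequence for curves on K3}). Its outer terms are free $\mathbf{Z}[\rho]$-modules: the left one is $T_X^*$ of rank $2$ (identified in the proof of Proposition \ref{prop:invariant in tensor}) and the right one is $H_1(E)$ of rank $1$ (Proposition \ref{action of rho on curve total dim}); since $\mathbf{Z}[\rho]$ is a principal ideal domain and the quotient is free, the sequence splits, and it suffices to produce a basis of the sub and a lift of a basis of the quotient. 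Because $\beta_1,\beta_2$ are closed, $B_1^*,B_2^*$ lie in the weight $(-2)$ part $H_2(\widetilde X)_\rho=T_X^*$ and coincide with the classes $m''$ of Proposition \ref{hermitian form using delta E beta}(2); from $\delta_E=\frac{1}{1-\rho'}\Delta_E$ one has $(1-\rho)B_i^*=\lambda(\Delta_E\otimes\beta_i)$, which by Corollary \ref{cor to lattice str of T} are free generators of $T_X$. Combined with $(1-\rho)T_X^*=T_X$ (Proposition \ref{prop K3 and ExC transcendental}(\ref{compare tr lat for ExC and X})) and the injectivity of multiplication by $1-\rho$ on the torsion-free $T_X^*$, this shows $\{B_1^*,B_2^*\}$ is a basis of $T_X^*$.

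For the quotient I would compute $\partial B_3^*$ geometrically. Since $P_0$ is a $\rho$-fixed point, the identity $\beta_3=\rho(1-\rho)\frakl_{1-x_1}$ gives $\partial\beta_3=\rho(1-\rho)s$ with $s$ the endpoint of $\frakl_{1-x_1}$ on $\Sigma_1$, a generator of $H_0(\Sigma_1)^0$. Restricting $\lambda$ to $E\times\Sigma_1$ and using the identification $E_2\simeq(E\times\Sigma_1)/\langle\hat\rho\rangle\simeq E$, under which the three points of $\Sigma_1$ are cyclically permuted while $\delta_E$ is rotated by $\rho'$, I would obtain $\partial B_3^*=\lambda(\delta_E\otimes\partial\beta_3)=\rho\,\Delta_E$, a unit multiple of the generator $\Delta_E$ of $H_1(E_2)$. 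Hence $B_3^*$ lifts a basis of the quotient and $B_1^*,B_2^*,B_3^*$ is a basis. Finally, reading the relations modulo $1+\rho+\rho^2=0$ (so that $1-\rho^2=-\rho^2(1-\rho)$ and $\rho^3=1$) rewrites them as $(1-\rho)\Gamma_2=-B_1^*$, $(1-\rho)\Gamma_1=B_1^*-\rho B_2^*$ and $(1-\rho)\Gamma_3=B_3^*$; the transition matrix has determinant $-\rho\in\mathbf{Z}[\rho]^{\times}$, so $(1-\rho)\Gamma_1,(1-\rho)\Gamma_2,(1-\rho)\Gamma_3$ is again a basis.

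The main obstacle is the chain-level bookkeeping in the second step: one must keep the distinction between the chain $\delta_E$ and the cycle $\Delta_E=(1-\rho')\delta_E$ straight throughout, confirm that $\lambda(\delta_E\otimes\beta_i)$ is genuinely closed for closed $\beta_i$ (the apparent boundary contributions over the ramification values $r=0,1$ must cancel in the $\rho$-coinvariants) so that $B_1^*,B_2^*$ land in $H_2(\widetilde X)_\rho$, and verify that the boundary of $B_3^*$ is the full generator $\rho\Delta_E$ rather than a $(1-\rho)$-multiple of it. Once these identifications are pinned down, the remaining arguments reduce to the linearity of the first step and the splitting of a short exact sequence of free modules over the principal ideal domain $\mathbf{Z}[\rho]$.
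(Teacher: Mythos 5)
Your proposal is correct and follows essentially the same route as the paper: the three relations are obtained exactly as in the paper's proof, by applying the $\mathbf{Z}[\rho]$-linear map $c\mapsto\lambda(\delta_E\otimes c)$ to the identities of Proposition \ref{def:marked-configuration} and using $\Gamma_i=\lambda(\delta_E\otimes\gamma_i)$ from (\ref{relation between chain and products}). For the freeness the paper simply cites Propositions \ref{action of rho on curve total dim} and \ref{prop:invariant in tensor}; your argument via the weight exact sequence (\ref{fundamental exact sequence for curves on K3}), identifying $B_1^*,B_2^*$ with a basis of $T_X^*$ and $\partial B_3^*$ with a unit multiple of $\Delta_E$, is just an explicit unpacking of those citations rather than a genuinely different method.
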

\begin{proof}
The first statement of the proposition is a consequence of
Proposition \ref{action of rho on curve total dim},
\ref{prop:invariant in tensor}.
By the equality (\ref{relation between chain and products}),
we have
$$
\Gamma_i=\lambda(\delta_E\otimes \gamma_i)\quad (i=1,2,3),
$$
and the relations 
follow from
(\ref{equation gamma1-4}).
\end{proof}

\section{Period map for marked triple coverings of $\bold P^2$}
\label{sec:priod map triple cov p3}
In this section, we define a marking on a special configuration
of $6$ lines in $\bold P^2$,
and its moduli space $\Cal M_{mk}$.
We define the period map $\Cal M_{mk}\to \bold B\times \bold C^2$ 
from the moduli space of marked configuration to 
a period domain (for the definition of $\bold B$, see 
(\ref{definition of ball B})).

Let $\vec\ell=(x_1,x_2)$ be an element 
in $\Cal M$.
Recall that
the triple covering $X=X_{\vec\ell}$ of $\bold P^2$ and
the triple covering $C=C_{\vec\ell}$ of $\bold P^1$
are defined by the equations (\ref{eq:triple-surface}) and
 (\ref{def eq of ellipic curve with w-action}).
Let $\Sigma_i$ be the subsets of $C$ defined in 
(\ref{definition of sigma1,2 in C}).

\subsection{Level structures of monodromy curves and K3 surfaces}
\label{subsec:level strucutre on K3}
Let $(x_1,x_2)$ be an element in $\Cal M$ (not necessarily contained in the domain $M$).
We define
mod $(1-\rho)$-markings of $H_1(C)$ and $H_2(\widetilde{X})_{\rho}$.

We begin with the mod $(1-\rho)$-marking of $H_1(C)$.
We choose a path $\frakl_1$  (resp. $\frakl_t, \frakl_{1-x_1}$) in $C$
starting from the branching point $u=0$ and
ending with $u=1$ (resp. $u=\dfrac{1}{t}, 1-x_1$).
Since $(1-\rho)\frakl_1$ (resp. $(1-\rho)\frakl_t,(1-\rho)\frakl_{1-x_1}$) is an element in 
$H_1(C,\Sigma_1)$, 
$\frakl_1$ (resp. $\frakl_t,\frakl_{1-x_1}$) is an element in 
$\dfrac{1}{1-\rho}H_1(C,\Sigma_1)$
and defines a class $\overline{\frakl_1}$
 (resp. $\overline{\frakl_t},\overline{\frakl_{1-x_1}}$) in
$\big(\dfrac{1}{1-\rho}H_1(C,\Sigma_1)\big)/H_1(C,\Sigma_1)$.
We define $\bold F_3$-vector spaces $Z_{\bold F_3}$ and
$H_{\bold F_3}$ by
\begin{align*}
Z_{\bold F_3}&=\ker([P_0]\bold F_3\oplus [P_1]\bold F_3\oplus [P_t]\bold F_3\oplus [P_\infty]\bold F_3 \to \bold F_3) \\
&
a_0[P_0]+a_1[P_1]+a_t[P_t]+a_{\infty}[P_\infty] \mapsto 
a_0+a_1+a_t+a_{\infty},
\\
H_{\bold F_3}&=
Z_{\bold F_3}/([P_0]-[P_1]-[P_t]+[P_\infty]).
\end{align*}
It is easy to show the following lemma.
\begin{lemma}
\label{lemma:level structure on curve indep}
\begin{enumerate}
\item
The classes $\overline{\frakl_1}, \overline{\frakl_t}$ and $\overline{\frakl_{1-x_1}}$
depend only on 
the end points
$\frakl_1,\frakl_t$ and $\frakl_{1-x_1}$.
\item
The $\bold F_3$-linear map
$\Cal L:\dfrac{1}{1-\rho}H_1(C)/H_1(C)
\to H_{\bold F_3}$
defined by
$$
\Cal L(\overline{\frakl_1})=[P_1]-[P_0],\quad
\Cal L(\overline{\frakl_t})=[P_t]-[P_0]
$$
is an isomorphism independent of
the choice of $\frakl_1$ and $\frakl_t$.
\end{enumerate}
\end{lemma}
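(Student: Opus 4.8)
The plan is to reduce both statements to three ingredients already available: the free rank-two $\bold Z[\rho]$-module structure $H_1(C)=\bold Z[\rho]\beta_1\oplus\bold Z[\rho]\beta_2$ of Proposition \ref{action of rho on curve total dim}, the homological identities (\ref{homological identity 1}), and the ring isomorphism $\bold Z[\rho]/(1-\rho)\simeq \bold F_3$ coming from $(1-\rho)(1-\rho^2)=3$. For part (1), I would argue purely topologically. If $\frakl_1$ and $\frakl_1'$ are two paths with the common endpoints $P_0$ and $P_1$, then $\frakl_1-\frakl_1'$ is a $1$-cycle, hence represents a class in $H_1(C)$, which maps into $H_1(C,\Sigma_1)$ and is therefore killed in the quotient $\big(\tfrac{1}{1-\rho}H_1(C,\Sigma_1)\big)/H_1(C,\Sigma_1)$; thus $\overline{\frakl_1}$ is unchanged. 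The identical argument applies to $\frakl_t$ and, noting that two paths from $P_0$ to the same point of $\Sigma_1$ still differ by an absolute cycle, to $\frakl_{1-x_1}$.

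For part (2), I would first identify both source and target as two-dimensional $\bold F_3$-vector spaces. Multiplication by $(1-\rho)$ gives an isomorphism $\tfrac{1}{1-\rho}H_1(C)/H_1(C)\simeq H_1(C)/(1-\rho)H_1(C)\simeq(\bold Z[\rho]/(1-\rho))^2$, while on the other side $Z_{\bold F_3}$ is three-dimensional and $H_{\bold F_3}$ is its quotient by the line spanned by $[P_0]-[P_1]-[P_t]+[P_\infty]$, hence two-dimensional. I would then exhibit $\overline{\frakl_1},\overline{\frakl_t}$ as a basis of the source: from $(1-\rho)\frakl_t=\beta_2$ one reads off that $\overline{\frakl_t}$ generates the $\beta_2$-summand, and from $(1-\rho^2)\frakl_1=\beta_1$ together with $1-\rho^2=(1-\rho)(1+\rho)$ and the fact that $1+\rho=-\rho^2$ is a unit, one gets $(1-\rho)\frakl_1=-\rho\beta_1$, so $\overline{\frakl_1}$ generates the $\beta_1$-summand. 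Since $\overline{\frakl_1},\overline{\frakl_t}$ form a basis, $\Cal L$ is well defined and, by part (1), independent of the chosen paths.

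It remains to see that the images span the target and that the assignment is consistent. The defining relation of $H_{\bold F_3}$ yields $[P_\infty]-[P_0]=([P_1]-[P_0])+([P_t]-[P_0])$, so $[P_1]-[P_0]$ and $[P_t]-[P_0]$ span the two-dimensional space $H_{\bold F_3}$ and hence form a basis; therefore $\Cal L$ carries a basis to a basis and is an isomorphism. As a consistency check I would confirm, using $(1-\rho)\frakl_\infty=-\rho^2\beta_1+\beta_2$ and $\rho^2\equiv 1\pmod{1-\rho}$, that $\overline{\frakl_\infty}=\overline{\frakl_1}+\overline{\frakl_t}$, in agreement with $\Cal L(\overline{\frakl_\infty})=[P_\infty]-[P_0]$. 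The main point requiring care is not any single computation but the bookkeeping between the two ambient homology groups $H_1(C)$ and $H_1(C,\Sigma_1)$, and especially the step extracting $(1-\rho)\frakl_1$ from the relation involving $1-\rho^2$: here the freeness of $H_1(C)$ and the invertibility of $1+\rho$ are what guarantee that $\overline{\frakl_1},\overline{\frakl_t}$ are a genuine basis rather than merely nonzero classes.
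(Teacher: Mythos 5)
The paper itself gives no proof of this lemma --- it is introduced with ``It is easy to show the following lemma'' --- so there is no argument of the authors to compare yours against; what you have written is a correct filling-in of the omitted proof, built from exactly the ingredients the paper has already put in place. Part (1) is right: for fixed endpoints the difference of two choices of path is an absolute $1$-cycle, hence lies in $H_1(C)\subset H_1(C,\Sigma_1)$ and dies in the relevant quotient (and for $\frakl_{1-x_1}$ the difference of two paths to the same point of $\Sigma_1$ is again absolute). In part (2) the dimension counts are correct ($\tfrac{1}{1-\rho}H_1(C)/H_1(C)\simeq H_1(C)/(1-\rho)H_1(C)\simeq(\bold Z[\rho]/(1-\rho))^2\simeq\bold F_3^2$ on the source side; $\dim Z_{\bold F_3}=3$ and $H_{\bold F_3}$ two-dimensional on the target side); the extraction of $(1-\rho)\frakl_1=-\rho\beta_1$ from $(1-\rho^2)\frakl_1=\beta_1$ via the unit $(1+\rho)^{-1}=-\rho$ is exactly the step that makes $\overline{\frakl_1},\overline{\frakl_t}$ a genuine basis rather than merely nonzero; and the observation that the defining relation of $H_{\bold F_3}$ turns $[P_1]-[P_0],[P_t]-[P_0]$ into a basis of the target closes the argument. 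Your consistency check also works out: $-\rho^2\beta_1+\beta_2\equiv-\beta_1+\beta_2$ agrees with the image of $\overline{\frakl_1}+\overline{\frakl_t}$.

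One point is worth making explicit. The identities (\ref{homological identity 1}) and the cycles $\beta_1,\beta_2$ of \S\ref{relative homology explicit form} are constructed under the standing hypothesis that $t$ is real with $0<t<1$, whereas the lemma is stated for an arbitrary $(x_1,x_2)\in\Cal M$. As written, your argument proves the statement for $(x_1,x_2)\in M$; for the general case you should add the (routine) remark that $H_1(C_t)$, the ramification points $P_0,P_1,P_t,P_\infty$, and the classes $\overline{\frakl_1},\overline{\frakl_t}$ all form locally constant families over the connected base $\bold C-\{0,1\}$, so that the basis property, and with it the isomorphism statement, persists under continuation from the real locus. This is the same continuation device the paper itself uses in Proposition \ref{extra involution for general case}, so the gap is one of bookkeeping rather than of substance.
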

We define 
the following classes 
in $\big(\dfrac{1}{1-\rho}H_1(C,\Sigma_1)\big)/H_1(C,\Sigma_1)$:
\begin{align*}
&\big[\dfrac{1}{1-\rho}\beta_1\big]=-\overline{\frakl_1},\quad
\big[\dfrac{1}{1-\rho}\beta_2\big]=\overline{\frakl_t},\quad
\big[\dfrac{1}{1-\rho}\beta_3\big]=\overline{\frakl_{1-x_1}}.
\end{align*}
If $(x_1,x_2)$ belongs to $M$,
then they coincide with the image of 
$\big[\dfrac{1}{1-\rho}\beta_1\big]$,
$\big[\dfrac{1}{1-\rho}\beta_2\big]$ and
$\big[\dfrac{1}{1-\rho}\beta_3\big]$ defined in 
\S \ref{relative homology explicit form}
by the relations
(\ref{homological identity 1})
and (\ref{homological identity 2}).
Using symplectic basis $\{\alpha_1, \alpha_2, \beta_1, \beta_2\}$
the elements $\overline{\frakl_i}$ 
$(i=0,1,t,\infty)$ are written 
as 
$$
\frakl_i\equiv \frac{1}{3}(-p_iU,p_i)\begin{pmatrix}\alpha\\ \beta\end{pmatrix},\quad 
\alpha=\begin{pmatrix}\alpha_1 \\ \alpha_2\end{pmatrix},
\beta=\begin{pmatrix}\beta_1 \\ \beta_2\end{pmatrix},\quad
(i=0,1,t,\infty)
$$
modulo $H_1(C)$, where
\begin{equation}
\label{eq:P-b-ch}
p_0=(0,0),\quad p_1=(1,0),\quad p_t=(0,2),\quad p_\infty=(1,2).
\end{equation}

Next we define the mod $(1-\rho)$ marking of $\widetilde{X}$ using
the marking of $C$.
By choosing a branch of $(1-x_1)^{\frac{1}{3}}(1-x_2)^{\frac{1}{3}}$,
we obtain a rational map
\begin{equation*}
\lambda:E\times C\dasharrow \widetilde{X}
\end{equation*}
by a morphism given in
(\ref{quotient by order 3 action}). By Proposition \ref{prop:invariant in tensor},
the rational map $\lambda$ induces an isomorphism
\begin{align*}
&\bigg(\dfrac{1}{1-\rho}H_1(E)\otimes_{\bold Z[\rho]}
 H_1(C,\Sigma_1)\bigg)
\bigg/
\bigg(H_1(E)\otimes_{\bold Z[\rho]} H_1(C,\Sigma_1)\bigg)
\\
&\xrightarrow[\simeq]{\lambda_*} 
\bigg(H_2(\widetilde{X},E_2)_{\rho}\bigg)\bigg/\bigg((1-\rho)
H_2(\widetilde{X},E_2)_{\rho}\bigg).
\end{align*}
The image of 
$$
\Delta_E\otimes \big(\dfrac{1}{1-\rho}\beta_i\big)=
\delta_E\otimes \beta_i 
$$
under the map $\lambda_*$ is denoted by $\overline{B_i}$.
One can show the following lemma easily.
 \begin{lemma}
  \label{lemma:level structure of K3 surface}
The elements $\overline{B_1},\overline{B_2}$ and $\overline{B_3}$ 
 in
 $\bigg(H_2(\widetilde{X},E_2)_{\rho}\bigg)\bigg/\bigg((1-\rho)
H_2(\widetilde{X},E_2)_{\rho}\bigg)$
do not depend on the choice of 
 $(1-x_1)^{\frac{1}{3}}(1-x_2)^{\frac{1}{3}}$.
 The element $\overline{B_i}$ in
 $\bigg(H_2(\widetilde{X},E_2)_{\rho}\bigg)\bigg/\bigg((1-\rho)
 H_2(\widetilde{X},E_2)_{\rho}\bigg)$
 defined as above is denoted by
 $\overline{B_i}(X)$.
 \end{lemma}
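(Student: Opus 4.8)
The plan is to trace the branch ambiguity of $(1-x_1)^{1/3}(1-x_2)^{1/3}$ through the map $\lambda$ and to see that it is exactly absorbed by the automorphism $\rho$, which acts trivially modulo $(1-\rho)$.

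First I would observe that the curves $E$ and $C$, the chain $\delta_E$ of \S\ref{relative homology explicit form}, and the cycles $\beta_i$ depend only on $(x_1,x_2)$ and not on the chosen cube root $c:=(1-x_1)^{1/3}(1-x_2)^{1/3}$. Thus the source of the isomorphism $\lambda_*$ of Proposition \ref{prop:invariant in tensor} and the class $\delta_E\otimes\beta_i$ are fixed once and for all; only the rational map $\lambda$, and hence the morphism it induces, can vary with $c$.

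Next I would inspect the defining formula (\ref{quotient by order 3 action}): its $z$-coordinate contains the factor $c^2=(1-x_1)^{2/3}(1-x_2)^{2/3}$, while $p$ and $q$ do not involve $c$. Replacing $c$ by $\omega c$ multiplies $c^2$ by $\omega^2$, hence multiplies $z$ by $\omega^2$ and fixes $(p,q)$; since $\rho$ is the automorphism $(p,q,z)\mapsto(p,q,\omega z)$, this means the map attached to $\omega c$ is $\rho^2\circ\lambda$, and the one attached to $\omega^2 c$ is $\rho\circ\lambda$. On the induced pushforward into $H_2(\widetilde{X},E_2)_{\rho}$ this replaces $\lambda_*$ by $\rho^2\circ\lambda_*$ (resp. $\rho\circ\lambda_*$), so the three possible values of the image of $\delta_E\otimes\beta_i$ differ from one another only by the action of a power of $\rho$.

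Finally I would pass to the quotient by $(1-\rho)H_2(\widetilde{X},E_2)_{\rho}$. There $\rho$ acts as the identity: from $\rho^2+\rho+1=0$ one gets $\rho^2-1=\rho^2(1-\rho)\in(1-\rho)\bold Z[\rho]$, so $\rho\equiv\rho^2\equiv 1\pmod{(1-\rho)}$. Hence the three images agree in the quotient and $\overline{B_i}$ is well defined independently of the branch; indeed the entire composite of $\lambda_*$ with the projection to the quotient is branch-independent. The only step requiring care is the middle one, namely checking cleanly that altering the cube root post-composes $\lambda$ with a power of the deck transformation $\rho$ rather than yielding an essentially different map — but this is immediate once one isolates how $c^2$ enters (\ref{quotient by order 3 action}) and compares with the definition of $\rho$.
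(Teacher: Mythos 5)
Your argument is correct and is evidently the intended one: the paper itself offers no proof (it only remarks that the lemma is easy), and your observation that replacing $(1-x_1)^{1/3}(1-x_2)^{1/3}$ by an $\omega$-multiple post-composes $\lambda$ with a power of the covering automorphism $\rho$, which acts trivially on the quotient by $(1-\rho)H_2(\widetilde{X},E_2)_{\rho}$, is exactly the point. No gaps.
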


\subsection{Moduli space $\Cal M_{mk}$ of 
marked triple coverings of $\bold P^2$}

\subsubsection{The standard modules and bilinear forms}
\label{subsec:standard module}
We set 
$$
W_{(-2)}=\<B_1, B_2\>_{\bold Z[\rho]}\subset
W_{(-1)}=\<B_1, B_2, B_3\>_{\bold Z[\rho]}.
$$
Here $B_1,B_2,B_3$ form a formal free basis over $\bold Z[\rho]$.
We use an identification 
$
W_{(-1)}  \simeq \bold Z[\rho]^3
$
by writing an element $v$ in $W_{(-1)}$ by
\begin{equation}
\label{base and coefficients}
v=(c_1,c_2,c_3)\begin{pmatrix}
B_1 \\ B_2 \\ B_3 
	       \end{pmatrix},
\quad c_1, c_2, c_3\in \bold Z[\rho].
\end{equation}
Similarly, the submodule $W_{(-2)}$ is 
identified with 
$\{(c_1,c_2) \in \bold Z[\rho]^2\}$.

We define a $\bold Z[\rho]$-valued hermitian form $h(\ ,\ )$ and
a $\bold Z$-valued symmetric bilinear form $\<\ ,\ \>$ on $W_{(-2)}$ by 
\begin{equation}
\label{symmetric form and hermitian form}
 h(x,y)=x U ^t\overline{y},
\quad \<x,y\>=\dfrac{2}{3}\Re(h(x,y)),
\end{equation}
where $U$ is given in (\ref{equ:definition of matrix U}).

\subsubsection{Moduli space of marked of configurations}
 \begin{definition}[Marked configuration]
\label{def:marked config}
We define a marked configuration by
a pair $((x_1,x_2),\mu)$ consisting of
\begin{enumerate}
\item
a point $(x_1,x_2)$ in $\Cal M$,
\item
an  isomorphism (marking) $\mu:W_{(-1)}\to H_2(\widetilde{X},E_2)_{\rho}$ 
of $\bold Z[\rho]$-modules,
\end{enumerate}
satisfying the following three conditions.
\begin{enumerate}[label=(\alph*)]
 \item 
The image of $W_{(-2)}$ is identified with
$H_2(\widetilde{X})_{\rho}$ under the map $\mu$.
Under this isomorphism, 
the symmetric bilinear form on $W_{(-2)}$ and the intersection form
on $H_2(\widetilde{X})_{\rho}$
are compatible.
\item
\label{def of marking property 3}
     Under the map     
$$
W_{(-1)}/W_{(-2)}\to H_2(\widetilde{X},E_2)_{\rho}/
H_2(\widetilde{X})_{\rho}\simeq H_1(E_2)
     $$
     induced by $\mu$,
the element $B_3$
is sent to the classes of $\Delta_E$,
where the second isomorphism is obtained by
the exact sequence 
(\ref{fundamental exact sequence for curves on K3}).
\item
(Level structures) Let
$$
\overline{\mu}:W_{(-1)}/(1-\rho)W_{(-1)} \to 
H_2(\widetilde{X},E_2)_{\rho}/
(1-\rho)H_2(\widetilde{X},E_2)_{\rho}
$$
be the map induced by the map $\mu$.
Then the class of $B_i$ mod $(1-\rho)$ is mapped to the element
     $\overline{B_i}(X)$ defined in Lemma
     \ref{lemma:level structure of K3 surface}.
\end{enumerate}
 \end{definition}
The set of marked configurations is denoted by $\Cal M_{mk}$.

\subsubsection{The case where $\vec\ell\in M$}
A consequence of Proposition 
\ref{prop K3 and ExC transcendental}, we have the following proposition.
\begin{proposition}
Let $\vec\ell=(x_1,x_2)\in M$.
 Using the element $\beta_i$ in $H_1(C)$ defined in
\S \ref{relative homology explicit form},
we define
a $\bold Z[\rho]$-isomorphism $\mu:W_{(-1)}\to H_2(\widetilde{X},E_2)_{\rho}$ by
setting
$$
\mu(B_i)=\lambda(\delta_E\otimes \beta_i).
$$
Then by 
the definition of $\overline{B_i}(X)$ in Lemma \ref{lemma:level structure of K3 surface},
Proposition \ref{hermitian form using delta E beta},
\ref{corresondence of cycles in ExC and X generators}
 and
 the relations (\ref{homological identity 1})
and (\ref{homological identity 2}),
the pair $((x_1,x_2),\mu)$ is a marked configuration.
\end{proposition}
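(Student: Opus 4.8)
The plan is to verify, for the assignment $\mu(B_i)=\lambda(\delta_E\otimes\beta_i)$, the three conditions (a), (b), (c) of Definition \ref{def:marked config}, after first confirming that $\mu$ is a genuine $\bold Z[\rho]$-isomorphism. The latter is immediate from Proposition \ref{corresondence of cycles in ExC and X generators}: there it is shown that $B_1^*,B_2^*,B_3^*$, with $B_i^*=\lambda(\delta_E\otimes\beta_i)$, freely generate $H_2(\widetilde{X},E_2)_{\rho}$ over $\bold Z[\rho]$. Since $B_1,B_2,B_3$ form a formal free basis of $W_{(-1)}$, the assignment $B_i\mapsto B_i^*=\mu(B_i)$ is an isomorphism of free $\bold Z[\rho]$-modules.

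For condition (a) I would argue in two steps. First, Proposition \ref{hermitian form using delta E beta}(2) places $\mu(B_1),\mu(B_2)=\lambda(\delta_E\otimes\beta_i)$ in $T_X^*=H_2(\widetilde{X})_{\rho}$ and evaluates $\langle\mu(B_i),\mu(B_j)\rangle_X=\frac{2}{3}\Re(\beta_iU{}^t\overline{\beta_j})$; under the identification $W_{(-2)}\simeq\bold Z[\rho]^2$ this is precisely the form of (\ref{symmetric form and hermitian form}), so the bilinear forms match. To upgrade the inclusion $\mu(W_{(-2)})\subseteq H_2(\widetilde{X})_{\rho}$ to an equality, I would invoke condition (b) below: the quotient $H_2(\widetilde{X},E_2)_{\rho}/H_2(\widetilde{X})_{\rho}\simeq H_1(E_2)$ from the second sequence in (\ref{fundamental exact sequence for curves on K3}) is free of rank one, and $B_3^*$ maps to a generator, so the sequence splits off $\bold Z[\rho]B_3^*$ and forces $B_1^*,B_2^*$ to generate the kernel $H_2(\widetilde{X})_{\rho}$ exactly.

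Condition (b) is where the real work lies. I would trace $\delta_E\otimes\beta_3$ through the commutative diagram preceding Proposition \ref{prop:invariant in tensor}, using $\beta_3=\rho(1-\rho)\frakl_{1-x_1}$ from (\ref{homological identity 2}). Since the starting point $P_0$ of $\frakl_{1-x_1}$ is a ramification point fixed by $\rho$, the term $(1-\rho)P_0$ vanishes, so the connecting boundary carries $\beta_3$ to a generator $\rho(1-\rho)p$ of $H_0(\Sigma_1)^0$, where $p$ is the endpoint of $\frakl_{1-x_1}$ in $\Sigma_1$; then, under the bottom identification established in Proposition \ref{prop:invariant in tensor}(2), $\lambda$ sends $\delta_E\otimes\rho(1-\rho)p$ to the class of $\Delta_E$ in $H_1(E_2)$. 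Hence $B_3$ maps to $[\Delta_E]$, as required. The delicate point is to match orientations and the precise $\bold Z[\rho]$-normalizations so that the image is $[\Delta_E]$ on the nose rather than a $\bold Z[\rho]$-multiple of it; I expect this normalization check to be the main obstacle.

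Finally, condition (c) should be essentially formal. By the definition of $\overline{B_i}(X)$ in Lemma \ref{lemma:level structure of K3 surface}, this class is exactly the reduction of $\lambda(\delta_E\otimes\beta_i)$ in the level-structure quotient; and for $\vec\ell\in M$ the classes $[\frac{1}{1-\rho}\beta_i]$ entering that definition agree with those of \S\ref{relative homology explicit form} via (\ref{homological identity 1}) and (\ref{homological identity 2}). Therefore $\overline{\mu}$ sends $B_i\bmod(1-\rho)$ to $\overline{B_i}(X)$ by construction, which completes the verification that $((x_1,x_2),\mu)$ is a marked configuration.
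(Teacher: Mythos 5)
Your proposal is correct and follows essentially the same route as the paper, which gives no separate proof but simply cites Proposition \ref{corresondence of cycles in ExC and X generators} (free generation by the $B_i^*$), Proposition \ref{hermitian form using delta E beta} (matching of the bilinear forms on $W_{(-2)}$ and $H_2(\widetilde{X})_{\rho}$), Lemma \ref{lemma:level structure of K3 surface} together with the relations (\ref{homological identity 1}) and (\ref{homological identity 2}) (conditions (b) and (c)); your write-up just unfolds these citations into the verification of conditions (a)--(c) of Definition \ref{def:marked config}. The normalization point you flag for condition (b) is indeed the only place requiring a genuine computation, and it is resolved exactly as you indicate, by tracing $\partial\beta_3$ through the identification of $H_1(E)\otimes_{\bold Z[\rho]}H_0(\Sigma_1)^0$ with $H_1(E_2)$ established in Proposition \ref{prop:invariant in tensor}.
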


Let $\mu:W_{(-2)}\to H_2(\widetilde{X})_{\rho}$
be a marking in Definition \ref{def:marked config}, 
and $\lambda:E\times C\dasharrow \widetilde{X}$
be a rational map in (\ref{quotient by order 3 action}).
Let 
$\beta_1, \beta_2$
be elements in $H_1(C)$ such that
$$
\mu(B_i)=\lambda(\delta_E\otimes \beta_i).
$$
Then we have the following proposition.
\begin{proposition}
The set $\{
\alpha_1=\rho(\beta_2), \alpha_2=\rho(\beta_1),
\beta_1,\beta_2\}$ forms a symplectic basis. 
Conversely, if $\{\alpha_1,\alpha_2, \beta_1,\beta_2\}$
is a symplectic basis satisfying
\begin{equation}
\label{compatible symplectic base with rho action}
\alpha_1=\rho(\beta_2),\quad
\alpha_2=\rho(\beta_1),
\end{equation}
then the set $\{B_1,B_2\}$ defined by
$B_i=\mu^{-1}(\lambda(\delta_E\otimes \beta_i))$
($i=1,2$) forms a basis of $H_2(\widetilde{X})_{\rho}$
and the intersection form is expressed as 
(\ref{symmetric form and hermitian form}) with respect to this basis.
\end{proposition}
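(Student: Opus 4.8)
The plan is to transport the entire statement through the isomorphism induced by $\lambda$ and reduce it to one identity for a hermitian form over $\bold Z[\rho]$. First I would record that the $\bold Z[\rho]$-linear map
$$\phi:H_1(C)\to H_2(\widetilde X)_\rho,\qquad \phi(m)=\lambda(\delta_E\otimes m),$$
is an isomorphism. This follows from Proposition \ref{prop:invariant in tensor}, which identifies $\lambda_*(\Delta_E\otimes\,\cdot\,)$ with an isomorphism of $H_1(C)$ onto $(1-\rho)H_2(\widetilde X)_\rho$, together with $\delta_E=(1-\rho')^{-1}\Delta_E$: since $\rho'$ corresponds to $\overline\rho$ on the $C$-factor and $(1-\rho)/(1-\overline\rho)=-\rho$ is a unit, the factor $(1-\rho)$ is absorbed and $\phi$ lands isomorphically onto $H_2(\widetilde X)_\rho$. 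Because $\mu(B_i)=\phi(\beta_i)$ and $\mu$ is an isomorphism, $\phi^{-1}\circ\mu$ carries $B_1,B_2$ to $\beta_1,\beta_2$; hence in the first assertion $\{\beta_1,\beta_2\}$ is a $\bold Z[\rho]$-basis of $H_1(C)$, equivalently $\{\beta_1,\beta_2,\rho\beta_1,\rho\beta_2\}$ is a $\bold Z$-basis. In the converse direction the symplectic hypothesis already makes $\{\beta_1,\beta_2\}$ a $\bold Z[\rho]$-basis, so $\mu(B_i)=\phi(\beta_i)$ is a $\bold Z[\rho]$-basis of $H_2(\widetilde X)_\rho$, which is the first claim there.

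The core is the following equivalence, which I would prove by a direct computation with the form $\langle m,n\rangle_C=\frac23\Re((\rho-\overline\rho)\,h(m,n))$ of Proposition \ref{action of rho on curve total dim}, where $h(m,n)=mU{}^t\overline n$ denotes the hermitian form with Gram matrix $U$ in the standard basis. For a $\bold Z[\rho]$-basis $\{\beta_1,\beta_2\}$ of $H_1(C)$ and $\alpha_i:=\rho\beta_{3-i}$ as in (\ref{compatible symplectic base with rho action}), the system $\{\alpha_1,\alpha_2,\beta_1,\beta_2\}$ is symplectic if and only if $h(\beta_i,\beta_j)=U_{ij}$. Concretely, using sesquilinearity $h(m,\rho n)=\overline\rho\,h(m,n)$ and $h(\rho m,\rho n)=h(m,n)$: since $(\rho-\overline\rho)c$ is purely imaginary for real $c$, the relations $\beta_i\cdot\beta_j=0$ and $\alpha_i\cdot\alpha_j=0$ hold exactly when every $h(\beta_i,\beta_j)$ is real; and because $\Re((\rho-\overline\rho)\overline\rho)=\frac32$, the relation $\beta_i\cdot\alpha_j=\delta_{ij}$ becomes $h(\beta_i,\beta_{3-j})=\delta_{ij}$, i.e. the Gram matrix of $h$ in the basis $\beta_1,\beta_2$ is again $U$.

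It remains to evaluate $h(\beta_i,\beta_j)$ in the first assertion, and here I would use the $\bold Z[\rho]$-equivariance of $\phi$. By Proposition \ref{hermitian form using delta E beta}(2) one has $\langle\phi(m),\phi(n)\rangle_X=\frac23\Re(h(m,n))$, while condition (a) of Definition \ref{def:marked config} identifies $\langle\,,\,\rangle_X$ on $H_2(\widetilde X)_\rho$ with $\frac23\Re(h_X)$, where $h_X$ has Gram matrix $U$ in the basis $\mu(B_i)$. Thus $\Re(h_X(\phi m,\phi n))=\Re(h(m,n))$ for all $m,n$, and applying this with $n$ replaced by $\rho n$ (so that $\phi(\rho n)=\rho\phi(n)$) yields in addition $\Re(\overline\rho\,h_X(\phi m,\phi n))=\Re(\overline\rho\,h(m,n))$. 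Since a hermitian $\bold Z[\rho]$-form is determined by the two real pairings $\Re h(\,\cdot\,,\,\cdot\,)$ and $\Re h(\,\cdot\,,\rho\,\cdot\,)$, it follows that $\phi^*h_X=h$; evaluating on $\beta_i$ gives $h(\beta_i,\beta_j)=h_X(\mu B_i,\mu B_j)=U_{ij}$. Combined with the equivalence above this proves the first assertion. For the converse, the symplectic hypothesis gives $h(\beta_i,\beta_j)=U_{ij}$ directly, whence $\langle\phi(\beta_i),\phi(\beta_j)\rangle_X=\frac23\Re(U_{ij})$, which is exactly the form (\ref{symmetric form and hermitian form}).

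The step I expect to be the main obstacle is this last identification: the intersection pairing $\langle\,,\,\rangle_X$ only records $\Re h$, so it cannot by itself distinguish the off-diagonal entry $h(\beta_1,\beta_2)$ from any element with the same real part. The decisive input is that $\phi$ is $\bold Z[\rho]$-linear, which supplies the twisted pairing $\Re h(\,\cdot\,,\rho\,\cdot\,)$ and thereby recovers the full hermitian form, forcing $h(\beta_1,\beta_2)=1$ rather than merely $\Re h(\beta_1,\beta_2)=1$; verifying the elementary recovery statement for hermitian $\bold Z[\rho]$-forms and the numerology $\Re((\rho-\overline\rho)\overline\rho)=\frac32$ is the only genuinely computational part.
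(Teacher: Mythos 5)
Your proof is correct. The paper states this proposition without giving any proof, so there is no argument to compare against; your reconstruction assembles exactly the ingredients the paper has already prepared, namely Proposition \ref{action of rho on curve total dim} (the form $\frac{2}{3}\Re((\rho-\overline\rho)\,mU\,^t\overline{n})$ on $H_1(C)$), Proposition \ref{hermitian form using delta E beta}(2), and condition (a) of Definition \ref{def:marked config}. You also correctly isolate the one genuinely non-formal point: the intersection form only sees $\Re h$, and the full hermitian form is recovered from $\Re h(\cdot,\cdot)$ and $\Re h(\cdot,\rho\,\cdot)$ because the comparison map is $\bold Z[\rho]$-linear; the numerology $\Re((\rho-\overline\rho)\overline\rho)=\tfrac{3}{2}$ checks out. (A cosmetic remark: in the homology coinvariant $M_{(\Delta,\rho)}$ one has $\rho'\otimes 1\equiv 1\otimes\rho$ rather than $1\otimes\overline\rho$, but since $(1-\rho)/(1-\overline\rho)=-\rho$ is a unit either way, your conclusion that $\lambda_*(\delta_E\otimes\,\cdot\,)$ maps $H_1(C)$ isomorphically onto $H_2(\widetilde X)_\rho$ is unaffected.)
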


\subsection{Period integrals of marked K3 surfaces}
In this section, we define a period map from 
$\Cal M_{mk}$ to $\bold B\times \bold C^2$ using the mixed Hodge
structure of $H_2(X,E_2)_{\rho}$.
Let
$\mu:W_{(-1)} \to H_2(\widetilde{X},E_2)_{\rho}$
be a marking of $\widetilde{X}$.

We consider the following commutative diagrams
for de Rham cohomologies:
$$
\begin{matrix}
0\to &H_{dR}^1(E_2)(\chi) &\to &H_{dR,c}^2(\widetilde{X}-E_2)(\chi) &\to &
H_{dR}^2(\widetilde{X})(\chi)
&\to 0
\\
& & & \cup & & \cup \\
& & & F^2H_{dR,c}^2(\widetilde{X}-E_2)(\chi)
 & \xrightarrow[\pi]{\simeq} &
 F^2H_{dR}^2(\widetilde{X})(\chi)
\end{matrix}
$$
$$
\begin{matrix}
0\to &H_{dR}^1(E_2)(\overline{\chi})  &\to &
H_{dR,c}^2(\widetilde{X}-E_2)(\overline{\chi}) 
&\to &H_{dR}^2(\widetilde{X})(\overline{\chi})
&\to 0
\\
& & & \cup & & \cup \\
& & & F^1H_{dR,c}^2(\widetilde{X}-E_2)(\overline{\chi})
 & \xrightarrow[\pi']{\simeq} &
 F^1H_{dR}^2(\widetilde{X})(\overline{\chi}).
\end{matrix}
$$
\begin{proposition}
The spaces 
$F^2H_{dR}^2(\widetilde{X})(\chi)$ and
$F^1H_{dR}^2(\widetilde{X})(\overline{\chi})$
are one dimensional over $\bold C$.
\end{proposition}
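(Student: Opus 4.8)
The plan is to read off both statements from the joint decomposition of $H^2_{dR}(\widetilde{X})$ under the Hodge filtration and under the $\rho$-action, after localizing everything on the transcendental part $T_X$.

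First I would dispose of $F^2H^2_{dR}(\widetilde{X})(\chi)$. Since $\widetilde{X}$ is a K3 surface, $F^2H^2_{dR}(\widetilde{X})=H^{2,0}(\widetilde{X})$ is one dimensional, spanned by the holomorphic two-form $\Omega_X=z\,dp\wedge dq$ (cf. \S\ref{subsec:relative chain on K3}). As $\rho$ acts by $(p,q,z)\mapsto(p,q,\omega z)$, one has $\rho^*\Omega_X=\omega\Omega_X=\chi(\rho)\Omega_X$, so $\Omega_X$ lies in the $\chi$-part. Hence $F^2H^2_{dR}(\widetilde{X})(\chi)=H^{2,0}(\widetilde{X})$ is one dimensional (and incidentally $F^2H^2_{dR}(\widetilde{X})(\overline{\chi})=0$).

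For $F^1H^2_{dR}(\widetilde{X})(\overline{\chi})$ I would first reduce to a question about $T_X$. Writing $F^1H^2_{dR}(\widetilde{X})=H^{2,0}\oplus H^{1,1}$ and using that $H^{2,0}$ is of type $\chi$ by the previous paragraph, we get $F^1H^2_{dR}(\widetilde{X})(\overline{\chi})=H^{1,1}(\widetilde{X})(\overline{\chi})$. Next, since $S_X$ and $T_X$ are mutually orthogonal for the non-degenerate intersection form, $H^2_{dR}(\widetilde{X})\cong(S_X\otimes\bold C)\oplus(T_X\otimes\bold C)$. In the proof of Proposition \ref{prop:invariant in tensor} it is shown that $H_2(\widetilde{X})^\rho=S_X$; thus $\rho$ acts trivially on $S_X$, and as $S_X\otimes\bold C$ consists of algebraic classes it lies in $H^{1,1}$ and contributes only to the trivial eigenspace. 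Consequently $H^{1,1}(\widetilde{X})(\overline{\chi})=T_X^{1,1}(\overline{\chi})$, and it remains to compute this inside $T_X$.

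Finally I would carry out the dimension count on $T_X$. By Proposition \ref{prop K3 and ExC transcendental}, $T_X$ is free of rank two over $\bold Z[\rho]$, so $T_X\otimes\bold C$ has $\chi$- and $\overline{\chi}$-parts each of dimension two. Its Hodge structure has $T_X^{2,0}$ and $T_X^{0,2}$ of dimension one and $T_X^{1,1}$ of dimension two. By the first paragraph $T_X^{2,0}=H^{2,0}$ lies in the $\chi$-part; moreover, because $\rho$ acts on $H^2(\widetilde{X},\bold R)$, complex conjugation on $H^2_{dR}(\widetilde{X})$ commutes with $\rho^*$ and sends the eigenvalue $\omega$ to $\omega^2$, hence interchanges the $\chi$- and $\overline{\chi}$-parts and carries $T_X^{2,0}$ onto $T_X^{0,2}$ in the $\overline{\chi}$-part. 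Removing these one-dimensional pieces from the two-dimensional isotypic components forces $T_X^{1,1}$ to meet each of the $\chi$- and $\overline{\chi}$-parts in exactly dimension one. Therefore $H^{1,1}(\widetilde{X})(\overline{\chi})$ is one dimensional, giving $\dim F^1H^2_{dR}(\widetilde{X})(\overline{\chi})=1$. The only genuinely delicate point is the reduction to $T_X$: everything rests on the triviality of the $\rho$-action on $S_X$ and on $T_X$ being free of rank two over $\bold Z[\rho]$, both already established, so the argument encounters no essential new obstacle.
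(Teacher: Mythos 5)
Your argument is correct, but it takes a genuinely different route from the paper. The paper's proof is a two-line appeal to the product structure: via the isomorphism (\ref{isom for de Rham cohomology}) induced by $\lambda$, it identifies $F^2H^2_{dR}(\widetilde{X})(\chi)$ with $\lambda^*(H^{10}(E)(\chi)\otimes H^{10}(C)(\chi))$ and $F^1H^2_{dR}(\widetilde{X})(\overline{\chi})$ with $\lambda^*(H^{01}(E)(\overline{\chi})\otimes H^{10}(C)(\overline{\chi}))$, and reads off the dimensions from the curve and the elliptic curve. You instead work intrinsically on the K3 surface: the $F^2$ statement follows from $h^{2,0}=1$ together with the eigenvalue computation $\rho^*\Omega_X=\omega\Omega_X$, and the $F^1$ statement from the orthogonal decomposition $H^2=S_X\otimes\mathbf{C}\oplus T_X\otimes\mathbf{C}$, the triviality of the $\rho$-action on $S_X$ (from the proof of Proposition \ref{prop:invariant in tensor}), the freeness of $T_X$ of rank two over $\mathbf{Z}[\rho]$ (Proposition \ref{prop K3 and ExC transcendental}), and the fact that complex conjugation swaps the $\chi$- and $\overline{\chi}$-isotypic pieces while exchanging $T_X^{2,0}$ and $T_X^{0,2}$. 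The paper's route is shorter and has the practical advantage of exhibiting explicit generators $\Omega_E\wedge\psi_1$ and $\Omega_E'\wedge\psi_2$, which are exactly the classes $\xi$, $\overline{\xi}$ used later in \S\ref{sec:coincid per map}; but it leans on the unproved surjectivity statement implicit in (\ref{isom for de Rham cohomology}). Your route avoids that dependence, stays entirely within the Hodge theory of $\widetilde{X}$, and yields slightly more as a byproduct, namely the full character decomposition of $T_X\otimes\mathbf{C}$ into $(2,2)$-dimensional $\chi$- and $\overline{\chi}$-parts with $F^2(\overline{\chi})=0$, which is precisely the structure invoked again in the proof of Proposition \ref{eta is in B}. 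The one step you pass over quickly --- that $T_X\otimes\mathbf{C}$ is a sub-Hodge structure so that $H^{1,1}=S_X\otimes\mathbf{C}\oplus T_X^{1,1}$ --- is standard (orthogonal complement of the algebraic sublattice $S_X$ under the polarization) and not a gap.
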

\begin{proof}
Using the isomorphism (\ref{isom for de Rham cohomology}), 
we have 
\begin{align*}
& F^2H_{dR}^2(\widetilde{X})(\chi)=
\lambda^*(H^{10}(E)(\chi)\otimes H^{10}(C)(\chi)),
\\
& F^1H_{dR}^2(\widetilde{X})(\overline{\chi})=
\lambda^*(H^{01}(E)(\overline{\chi})\otimes H^{10}(C)(\overline{\chi})). 
\end{align*}
Thus we have the proposition.
\end{proof}
By the above proposition, the spaces
$F^2H_{dR}^2(\widetilde{X})(\chi)$ and
$F^1H_{dR}^2(\widetilde{X})(\overline{\chi})$ can be regarded as
one dimensional
subspaces in
\begin{align*}
&(W_{(-1)}\otimes \bold C)(\chi)^*
\xleftarrow{\mu}
H_{dR,c}^2(\widetilde{X}-E_2)(\chi)
\text{ and } 
\\
&(W_{(-1)}\otimes \bold C)(\overline{\chi})^*
\xleftarrow{\mu}
H_{dR,c}^2(\widetilde{X}-E_2)(\overline{\chi}).
 \end{align*}
This one dimensional vector spaces are 
expressed by a matrix $P((x_1,x_2),\mu)$ in
the following way.
Let $\xi$ and $\xi'$ be bases of
$F^2H^2(\widetilde{X})(\chi)$ and
$F^1H^2(\widetilde{X})(\overline{\chi})$.
Via the isomorphism $\pi$ and $\pi'$ in the above
diagrams,
$\xi$ and $\xi'$
are regarded as elements
in $H^2_{dR,c}(\widetilde{X}-E_2)_{\rho}$.
Using
$\mu(B_1), \mu(B_2), \mu(B_3) \in H_2(\widetilde{X},E_2)_{\rho}$,
we define the period matrix $P((x_1,x_2),\mu)$ of a 
marked configuration $((x_1,x_2),\mu)$
as follows
\begin{align*}
P((x_1,x_2),\mu)=
\begin{pmatrix}
\<\mu(B_1),\xi\> &
\<\mu(B_1),\xi'\>
\\
\<\mu(B_2),\xi\> &
\<\mu(B_2),\xi'\>
\\
\<\mu(B_3),\xi\> &
\<\mu(B_3),\xi'\>
\end{pmatrix}.
\end{align*}
Here
$\<*,*\>$ is the pairing between the relative homology and the 
de Rham cohomology.

 \begin{proposition}
\label{eta is in B}
Let $((x_1,x_2),\mu)$
be an element in $\Cal M$ and
$P((x_1,x_2),\mu)$ be the period matrix of $((x_1,x_2),\mu)$ defined as above.
Then 
\begin{equation}
\label{K3 symetricity}
\dfrac{\<\mu(B_1),\xi\>}{
\<\mu(B_2),\xi\>}
=-\dfrac{\<\mu(B_1),\xi'\>}{
\<\mu(B_2),\xi'\>},
\end{equation}
and
$\eta=\dfrac{\<\mu(B_1),\xi\>}{
\<\mu(B_2),\xi\>}$
is an element of $\bold B$.
The elements $\eta$
is independent of the choice of $\xi$ and $\xi'$.
 \end{proposition}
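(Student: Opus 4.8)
The plan is to factor the period pairing through the product $E\times C$ and reduce the two columns of $P((x_1,x_2),\mu)$ to the period integrals $y_i=\int_{\beta_i}\psi_1$ of the monodromy curve. Writing $\mu(B_i)=\lambda(\delta_E\otimes\beta_i)$, where $\{\alpha_1,\alpha_2,\beta_1,\beta_2\}$ is the symplectic basis of $H_1(C)$ attached to the marking with $\alpha_1=\rho(\beta_2),\ \alpha_2=\rho(\beta_1)$, I would use the de Rham isomorphism (\ref{isom for de Rham cohomology}) together with the product description of the Hodge pieces in the preceding proposition to write $\xi=\lambda^*(\omega_E\otimes\psi_1)$ and $\xi'=\lambda^*(\omega_E'\otimes\psi_2)$, since by (\ref{action of rho on diff form}) the forms $\psi_1$ and $\psi_2$ span $H^0(C,{\it\Omega}^1_C)(\chi)$ and $H^0(C,{\it\Omega}^1_C)(\overline{\chi})$ respectively. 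Because the pairing between $H_2(\widetilde{X},E_2)_\rho$ and $H^2_{dR,c}(\widetilde{X}-E_2)$ respects the tensor decomposition of Proposition \ref{prop:invariant in tensor}, it factors as a product of an integral over the $E$-factor and one over the $C$-factor.

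This yields $p_{i1}=A\,y_i$ and $p_{i2}=A'\int_{\beta_i}\psi_2$, where $A=\int_{\delta_E}\omega_E$ and $A'=\int_{\delta_E}\omega_E'$ depend only on the $E$-factor and hence are independent of $i$ (every $\mu(B_i)$ carries the same $E$-component $\delta_E$, and the trace factor coming from the degree-$3$ map $\lambda$ is likewise a global constant). By Proposition \ref{action of extra invol on homology}(2) we have $\int_{\beta_1}\psi_2=y_1$ and $\int_{\beta_2}\psi_2=-y_2$, so
$$
\frac{p_{11}}{p_{21}}=\frac{y_1}{y_2},\qquad
\frac{p_{12}}{p_{22}}=-\frac{y_1}{y_2}.
$$
This is exactly the symmetry (\ref{K3 symetricity}) and identifies $\eta=p_{11}/p_{21}=y_1/y_2$. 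The independence of $\eta$ from the choices of $\xi$ and $\xi'$ is then immediate, since rescaling $\xi$ (resp. $\xi'$) only multiplies $A$ (resp. $A'$) by a scalar that cancels in each ratio.

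For the positivity $\eta\in\bold B$, I would observe that $\tau$ in (\ref{eq:pt-on-J(C)}), built from the same $\eta=y_1/y_2$, is the normalized period matrix of the genus-$2$ monodromy curve $C$ relative to the symplectic basis $\{\alpha_i,\beta_i\}$; its derivation uses only the relations $\alpha_i=\rho(\beta_{3-i})$, the eigenform relations (\ref{action of rho on diff form}), and Proposition \ref{action of extra invol on homology}, so by analytic continuation from the real locus $M$ the same expression holds throughout $\Cal M_{mk}$. Since $C$ is a smooth projective curve for every $(x_1,x_2)\in\Cal M$, Riemann's bilinear relations force $\tau\in\bold H_2$, and by the equivalence recorded just after (\ref{eq:pt-on-J(C)}) this holds if and only if $\Re(\eta)>0$, i.e. $\eta\in\bold B$. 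The intrinsic source of this positivity is the Hodge--Riemann relation $\<\Omega_X,\overline{\Omega_X}\> > 0$ on the K3 surface, combined with the hermitian form $h(x,y)=xU\,{}^{t}\overline{y}$ of signature $(1,1)$ supplied by the marking condition in Definition \ref{def:marked config}.

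The main obstacle is the factorization in the first step: one must verify that the period pairing genuinely splits as the advertised product and, crucially, that the $\chi$-part $\xi$ pairs against $\psi_1$ while the $\overline{\chi}$-part $\xi'$ pairs against $\psi_2$. This requires carefully tracking the character decomposition through the conjugate $\bold Z[\rho]$-action on $H^1(E)$ and through the identity $\lambda^*\lambda_*=(1+\hat\rho+\hat\rho^2)$ used in Proposition \ref{hermitian form using delta E beta}, so that the degree-$3$ trace factor is absorbed into the constants $A$ and $A'$ and disappears from the ratios.
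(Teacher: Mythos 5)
Your argument is essentially correct, but it is not the route the paper takes for this proposition. The paper's own proof stays entirely on the K3 surface: it identifies $H^2_{dR}(\widetilde X)(\chi)$ and $H^2_{dR}(\widetilde X)(\overline\chi)$ with $\bold C^2$ by pairing against $\mu(B_1),\mu(B_2)$, writes the cup product as $a\cup a'=3\cdot{}^t\varphi(a)U\varphi'(a')$ using the marking axiom (\ref{symmetric form and hermitian form}), and then deduces (\ref{K3 symetricity}) from $\xi\cup\xi'=0$ (both classes lie in the Hodge filtration, so their product vanishes for type reasons) and $\Re(\eta)>0$ from the Hodge--Riemann positivity $\xi\cup\overline{\xi}>0$. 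You instead push everything down to the monodromy curve via $\mu(B_i)=\lambda_*(\delta_E\otimes\beta_i)$, the projection formula, the extra involution, and Riemann's bilinear relations for the genus-$2$ curve $C$; this is precisely the computation the paper performs later in \S\ref{sec:coincid per map} to identify $per((x_1,x_2),\mu)$ with $(y_1/y_2,\,y_3/y_2,\,y_4/y_2)$. Using it here is legitimate --- there is no circularity, since neither Proposition \ref{extra involution for general case} nor the computation of \S\ref{sec:coincid per map} depends on Proposition \ref{eta is in B} --- and it buys you the explicit identification $\eta=y_1/y_2$ in terms of hypergeometric integrals for free. What it costs is that you need the extra involution for arbitrary $t\in\bold C\setminus\{0,1\}$ and for an arbitrary marking, not just on the real locus; your appeal to ``analytic continuation from $M$'' is the one soft spot, and you should invoke Proposition \ref{extra involution for general case} (existence and uniqueness of the involution for any symplectic basis satisfying its conditions (a), (b)) rather than Proposition \ref{action of extra invol on homology}, which is only established for $0<t<1$. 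With that reference supplied your proof closes; the paper's intrinsic argument is shorter and independent of the curve machinery, which is presumably why it is given first.
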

\begin{proof}
The spaces 
$H_{dR}^2(\widetilde{X})(\chi)$ and 
$H_{dR}^2(\widetilde{X})(\overline{\chi})$ is identified with 
$\bold C^2$ via the 
following isomorphisms:
\begin{align*}
&\varphi:H_{dR}^2(\widetilde{X})(\chi)\ni a
\mapsto 
\varphi(a)=\begin{pmatrix}
\<\mu(B_1),a\> 
\\
\<\mu(B_2),a\> 
\end{pmatrix}
\in \bold C^2,
\\
&\varphi':H_{dR}^2(\widetilde{X})(\overline{\chi})\ni a'
\mapsto 
\varphi(a)=\begin{pmatrix}
\<\mu(B_1),a'\> 
\\
\<\mu(B_2),a'\> 
\end{pmatrix}
\in \bold C^2.
\end{align*}
Let $\pi_{\chi}$ (resp. $\pi_{\overline{\chi}}$) be the image of the projection to the 
$\chi$-part 
(resp. $\overline{\chi}$-part)
according to the direct sum decomposition:
$$
H_2(\widetilde{X})_{\rho}\otimes \bold C \simeq T_X^*\otimes \bold C\simeq 
H_{dR}^2(\widetilde{X})(\chi) \oplus
H_{dR}^2(\widetilde{X})(\chi).
$$
Then we have
\begin{align*}
&\varphi(a_1\pi_{\chi}(B_1)+a_2\pi_{\chi}(B_2))=\dfrac{1}{3}(a_2,a_1), \\
&\varphi'(a_1\pi_{\overline{\chi}}(B_1)+a_2\pi_{\overline{\chi}}(B_2))=\dfrac{1}{3}(a_2,a_1).
\end{align*}
Since the cup product is given by the formula
(\ref{symmetric form and hermitian form}),
we have
$$
a\cup a'=3\cdot ^t\varphi(a)U\varphi'(a')
$$
for $a\in H_{dR}^2(\widetilde{X})(\chi), a'
\in H_{dR}^2(\widetilde{X})(\overline{\chi})$.
Since $\xi$ and $\xi'$ are contained in 
$F^2H^2_{dR}(X)(\chi)$ and
$F^1H^2_{dR}(X)(\overline{\chi})$, we have $\xi\cup \xi'=0$.
Thus equality (\ref{K3 symetricity}) follows.

The complex conjugate 
$H^2_{dR}(\widetilde{X})(\chi) \to H^2_{dR}(\widetilde{X})(\overline{\chi})$
with respect to $T_X^*$ is given by $\ ^t(a_1,a_2)\to \ ^t(\overline{a_1},\overline{a_2})$
via the identification $\varphi$ and $\varphi'$. By the positivity of the polarization, we have 
$\xi\cup \overline{\xi}>0$
for a nonzero element $\xi\in F^2H_{dR}^2(\widetilde{X})(\chi)$.
Therefore $\Re(\eta)>0$.
\end{proof}
 \begin{definition}
\label{modular embedding}
We define the period domain $\Cal D$ by
$$
\Cal D=\bold B\times \bold C^2,
$$
and $per:\Cal M_{mk} \to \Cal D$
by
$$
per((x_1,x_2),\mu)=(
\eta,z)\in \Cal D=\bold B\times \bold C^2.
$$
Here, $\eta$ is defined in Proposition \ref{eta is in B}
and $z$ is defined by
$$
z=
\left(
 \frac{\<\mu(B_3),\xi\>}{\<\mu(B_2),\xi\>},
 \frac{\<\mu(B_3),\xi'\>}{\<\mu(B_2),\xi'\>}
\right). 
$$
The vector $z$ is also independent of the choice of $\xi$ and $\xi'$.
 \end{definition}

\subsection{Transport of markings and a group action}
\subsubsection{Definition of $G(1-\rho)$ and its action on $W_{(-1)}$}
\label{definition of dis gp}
We define subgroups $\Gamma$ and $\Gamma(1-\rho)$ 
of $GL(2,\bold Z[\rho])$
by 
\begin{align*}
\Gamma&=\{g \in GL(2, \bold Z[\rho])\mid
g U \ ^t\overline{g}= U\}, \\
\Gamma(1-\rho)&=\{g \in \Gamma \mid
g \equiv I_2 \text{ mod }(1-\rho)\}, 
\end{align*}
and a subgroup 
$G$ and
$G(1-\rho)$ of 
$GL(3,\bold Z[\rho])$ by
\begin{align*}
\label{equ:def of G(1-rho)}
G&=\{\widetilde{g}=\begin{pmatrix}g & 0 \\ b & 1
\end{pmatrix} \in GL(3,\bold Z[\rho])\mid \ 
g\in \Gamma\},
\\
\nonumber
G(1-\rho)&=\{\widetilde{g}\in G
\mid \ 
\widetilde{g}\equiv id \text{ mod }(1-\rho) \}.
\end{align*}
The group $GL(3,\bold Z[\rho])$ acts on 
$W_{(-1)}$
from the right via the expression (\ref{base and coefficients}).

\subsubsection{The action of $G(1-\rho)$ on $\Cal M_{mk}$}
Let $((x_1,x_2),\mu)$ be a marked configuration and
$\widetilde{g}$ an element in 
$G(1-\rho)$.
By taking the composite $\mu\circ \widetilde{g}$
of $\widetilde{g}$ and
the marking 
$\mu:W_{(-1)}\to H_2(X,E_2)_{\rho}$,
we get an action of  $G(1-\rho)$ on $\Cal M_{mk}$.

By the expression of the intersection form of
the generic transcendental lattice of $\widetilde{X}$ obtained in
Proposition \ref{hermitian form using delta E beta},
the action of $G(1-\rho)$ preserves the intersection
form $\<\ ,\ \>_X$ on $H_2(\widetilde{X})$.
As a consequence, the group $G(1-\rho)$
acts on the moduli space $\Cal M_{mk}$ of marked configurations.
 \begin{proposition}
\label{quotient and iso}
The quotient of $\Cal M_{mk}$ by $G(1-\rho)$
is isomorphic to $\Cal M$.
\end{proposition}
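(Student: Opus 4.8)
The plan is to realize $\Cal M$ as the base of a forgetful map and to show that its fibers are precisely the orbits of $G(1-\rho)$. Define $\pi:\Cal M_{mk}\to\Cal M$ by $\pi((x_1,x_2),\mu)=(x_1,x_2)$. Since the action of $\widetilde g\in G(1-\rho)$ replaces $\mu$ by $\mu\circ\widetilde g$ without altering $(x_1,x_2)$, the map $\pi$ is constant on $G(1-\rho)$-orbits and therefore factors through $\overline\pi:\Cal M_{mk}/G(1-\rho)\to\Cal M$. It suffices to prove that $\overline\pi$ is a bijection. The action of $G(1-\rho)$ on each fiber of $\pi$ is automatically free because $\mu$ is an isomorphism, so once transitivity is established each fiber is a single free orbit, i.e.\ a $G(1-\rho)$-torsor.

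Surjectivity of $\overline\pi$ amounts to the existence of at least one marking for every $(x_1,x_2)\in\Cal M$, and here I would invoke the structure results already proven. Proposition \ref{hermitian form using delta E beta}(2) identifies $H_2(\widetilde X)_{\rho}$ with its intersection form with $W_{(-2)}$ carrying $\langle x,y\rangle=\tfrac{2}{3}\Re(h(x,y))$, while Proposition \ref{prop K3 and ExC transcendental} gives $T_X\simeq A_2\oplus A_2(-1)$ together with $(1-\rho)T_X^*=T_X$; these yield a form-preserving $\mathbf Z[\rho]$-isomorphism $W_{(-2)}\to H_2(\widetilde X)_{\rho}$. Since $H_2(\widetilde X,E_2)_{\rho}$ is free of rank three over $\mathbf Z[\rho]$ and sits in the exact sequence (\ref{fundamental exact sequence for curves on K3}) with quotient the free rank-one module $H_1(E_2)$ generated by the class of $\Delta_E$, one extends this isomorphism by sending $B_3$ to a lift of $[\Delta_E]$; adjusting $B_3$ by an element of $W_{(-2)}$ does not change its image in $H_1(E_2)$, so it can be chosen so that the reductions modulo $(1-\rho)$ match the $\overline{B_i}(X)$ of Lemma \ref{lemma:level structure of K3 surface}. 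This makes conditions (b) and (c) hold simultaneously and produces a marking, so $\pi$ is onto.

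The essential step is injectivity, i.e.\ transitivity: any two markings $\mu,\mu'$ over the same point differ by an element of $G(1-\rho)$. Set $\widetilde g=\mu^{-1}\circ\mu'$, a $\mathbf Z[\rho]$-automorphism of $W_{(-1)}$. Condition (a) for both markings carries $W_{(-2)}$ onto $H_2(\widetilde X)_{\rho}$, so $\widetilde g$ preserves $W_{(-2)}$ and has block-lower-triangular shape $\begin{pmatrix}g&0\\ b&d\end{pmatrix}$ with $g\in GL(2,\mathbf Z[\rho])$. Condition (b) forces $d=1$: the map induced by $\widetilde g$ on $W_{(-1)}/W_{(-2)}\cong\mathbf Z[\rho]$ is multiplication by $d$, and since $\mu(B_3)$ and $\mu'(B_3)$ have the same image $[\Delta_E]$ in $H_1(E_2)$ one gets $d\,[\Delta_E]=[\Delta_E]$, whence $d=1$ because $[\Delta_E]$ freely generates $H_1(E_2)$. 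Condition (c) forces $\widetilde g\equiv I_3\ \bmod\ (1-\rho)$: reducing $\mu'=\mu\circ\widetilde g$ modulo $(1-\rho)$ and using that the reductions of $\mu$ and $\mu'$ both send the classes of $B_i$ to $\overline{B_i}(X)$, the induced $\overline{\widetilde g}$ is the identity. It remains to see $g\in\Gamma$, which I regard as the main obstacle.

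The difficulty is that $g$ preserves only the $\mathbf Z$-valued symmetric form $\langle\ ,\ \rangle=\tfrac{2}{3}\Re(h)$, whereas $\Gamma$ is defined by preservation of the $\mathbf Z[\rho]$-valued hermitian form $h(x,y)=xU{}^t\overline y$. The resolution is that for a $\mathbf Z[\rho]$-linear map the hermitian form is recovered from its real part. Using $h(x,\rho y)=\overline\rho\,h(x,y)$, the pair $\bigl(\Re(c),\Re(\overline\rho\,c)\bigr)$ determines any $c=h(x,y)\in\mathbf Z[\rho]$, since $c\mapsto(\Re(c),\Re(\overline\rho\,c))$ is $\mathbf Q$-linear and injective on $\mathbf Q(\rho)$. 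As $g$ is $\mathbf Z[\rho]$-linear, applying the preservation of $\langle\ ,\ \rangle$ to the pairs $(x,y)$ and $(x,\rho y)$ shows that $g$ preserves both $\Re(h(x,y))$ and $\Re(h(x,\rho y))=\Re(\overline\rho\,h(x,y))$, hence preserves $h$ itself, so $gU{}^t\overline g=U$ and $g\in\Gamma$. Combining the three implications gives $\widetilde g\in G(1-\rho)$, proving transitivity. Thus $\overline\pi$ is a bijection; since $\pi:\Cal M_{mk}\to\Cal M$ is a covering whose fibers are the free, transitive $G(1-\rho)$-orbits, $\overline\pi$ upgrades to an isomorphism of complex analytic spaces $\Cal M_{mk}/G(1-\rho)\simeq\Cal M$.
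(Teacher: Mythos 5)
Your proof is correct and follows essentially the same route as the paper's: both reduce to showing that the forgetful map to $\Cal M$ has fibers on which $G(1-\rho)$ acts transitively, by setting $\widetilde g={\mu'}^{-1}\circ\mu$ (or its inverse), checking that it preserves $W_{(-2)}$, the hermitian form, and the level structure. You in fact supply details the paper leaves implicit — notably that $\mathbf Z[\rho]$-linearity lets one recover $h$ from its real part via $\Re(h(x,\rho y))=\Re(\overline\rho\,h(x,y))$, and that condition (b) forces the lower-right entry to be $1$ — while the paper handles surjectivity via the explicit marking constructed earlier for $\vec\ell\in M$ rather than your abstract existence argument.
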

 \begin{proof}
The natural map $\Cal M_{mk}\to \Cal M$ is surjective by
definition of $\Cal M_{mk}$.
We show that the fiber $\Cal M_{mk} \to \Cal M$
is transitive under the action of $G(\rho-1)$.
Let $(x_1, x_2)$ be an element in $\Cal M$, and
$((x_1,x_2),\mu)$ and
$((x_1,x_2),\mu')$ be
two marked configurations.
Let $\widetilde{g}={\mu'}^{-1}\circ \mu$ be the composite map
$$
{\mu'}^{-1}\circ \mu:
W_{(-1)} 
\xrightarrow{\mu} H_2(\widetilde{X},E_2)_{\rho}
\xrightarrow{{\mu'}^{-1}} W_{(-1)}.
$$
Then $\widetilde{g}$ becomes an automorphism of $W_{(-1)}$ compatible
with the $\rho$ action on $W_{(-1)}$.
Since the submodule $W_{(-2)}$ is mapped isomorphically to
the subspace $H_2(\widetilde{X})_{\rho}$
under the isomorphisms $\mu$ and $\mu'$,
the submodule $W_{(-2)}$ is stable under the isomorphism $\widetilde{g}$.
Let $g$ be the restriction of $\widetilde{g}$ to the submodule $W_{(2)}$.
Under the identifications $\mu$ and $\mu'$ of $W_{(2)}$ with $H_2(\widetilde{X})_{\rho}$,
intersection forms on $H_2(\widetilde{X})_{\rho}$ is transformed
the inner product on $W_{(-2)}$. Since $g$ preserves the action of $\rho$, 
the hermitian form $h$ is preserved by $g$.
Moreover by the condition for level structures for $\mu$ and $\mu'$,
$\overline B_1,\overline B_2$ and $\overline B_3$ are mapped to
$\overline B_1(X),\overline B_2(X)$ and $\overline B_3(X)$ by the isomorphisms
$$
\overline{\mu}, \overline{\mu'}:
W_{(-1)}/(1-\rho)W_{(-1)}\simeq H_2(\widetilde{X})_{\rho}/(1-\rho)H_2(\widetilde{X})_{\rho}.
$$
Therefore we have $g\equiv I_3$ mod $(1-\rho)$.
As a consequence, $g$ is an element in $G(1-\rho)$.
 \end{proof}

\subsubsection{The action of $G(1-\rho)$ on $\Cal D$}
The characters $\chi$ and $\overline{\chi}$
induce ring homomorphisms $\bold Z[\rho]\to
\bold C$ and they induce
group homomorphisms
\linebreak
$GL(3,\bold Z[\rho]) \to GL(3,\bold C)$,
which are also denoted by $\chi$ and
$\overline{\chi}$.

Let $B_1, B_2, B_3$ be
the $\bold Z[\rho]$-basis of
$W_{(-1)}$ defined in \S \ref{subsec:standard module}.
Using this basis, the group $G(1-\rho)$ acts on $W_{(-1)}$
via the identification given in (\ref{base and coefficients}).
Thus an element $g$ in $G(1-\rho)$ acts on the set of the pairs of one dimensional vector spaces
$$
\bigg\{(F^2,F^1)\ \bigg|\ 
\begin{matrix}
F^2 \subset
(W_{(-1)}\otimes \bold C)(\chi)^*,\ 
F^1\subset
(W_{(-1)}\otimes \bold C)(\overline{\chi})^* ,\\
\dim (F^2)=\dim(F^1)=1
\end{matrix}
\bigg\}.
$$
This action 
is expressed
as
\begin{align*}
\chi(\widetilde{g}) \times \overline{\chi}(\widetilde{g}):
(\xi \bold C,
\overline{\xi}\bold C)
\mapsto
(\chi(\widetilde{g})
\xi \bold C,
\overline{\chi}(\widetilde{g})
\overline{\xi}\bold C).
\end{align*}
By transporting the structure, 
we have an action of $G(1-\rho)$ on $\Cal D$, which is written as
$$
(\eta,z)=(\eta,z_1,z_2)\mapsto
(\frac{g_{11}\eta+g_{12}}{g_{21}\eta+g_{22}},
\frac{z_1+w_1\eta+w_2}{g_{21}\eta+g_{22}},
\frac{z_2-\overline{w_1}\eta+\overline{w_2}}{-\overline{g_{21}}\eta+
\overline{g_{22}}}),
$$
for
$$
\widetilde{g}=\begin{pmatrix}
g_{11} & g_{12} & 0 \\
g_{21} & g_{22} & 0 \\
w_1 & w_2 &1
\end{pmatrix}
\in  G(1-\rho).
$$
As a consequence, we have the following proposition.
\begin{proposition}
The above action defines an associative action.
Moreover the map $\Cal M_{mk}\to \Cal D$ 
is equivariant under
the action of $G(1-\rho)$.
By Proposition \ref{quotient and iso}, we have a map
\begin{equation}
\label{induced map on quotient} 
\Cal M \to \Cal D/G(1-\rho).
\end{equation}
\end{proposition}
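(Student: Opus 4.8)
The plan is to treat the three assertions in turn, reducing the first --- that the displayed formula is an associative action --- to the fact that it is transported from a manifestly linear one. Indeed, $\chi$ and $\overline{\chi}$ are group homomorphisms $G(1-\rho)\to GL(3,\bold C)$, so $\widetilde{g}\mapsto(\chi(\widetilde{g}),\overline{\chi}(\widetilde{g}))$ acts associatively on the set of pairs of lines $(\xi\bold C,\overline{\xi}\bold C)$ with $\xi\bold C\subset(W_{(-1)}\otimes\bold C)(\chi)^*$ and $\overline{\xi}\bold C\subset(W_{(-1)}\otimes\bold C)(\overline{\chi})^*$. By Proposition \ref{eta is in B} the symmetry relation (\ref{K3 symetricity}) cuts out the locus of admissible pairs, and the ratios $(\eta,z_1,z_2)$ of Definition \ref{modular embedding} identify this locus with $\Cal D=\bold B\times\bold C^2$; here one uses that $g\in\Gamma$ preserves the hermitian form $U$, hence the positivity underlying $\Re(\eta)>0$, so that $\bold B$ is stable and the denominator $g_{21}\eta+g_{22}$ never vanishes. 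Transporting the linear action along this bijection gives an associative action on $\Cal D$, and a direct computation of the ratios identifies it with the stated fractional-linear/affine formula. Associativity is thereby obtained for free.

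The substantive point is the equivariance of $per$. Write $\widetilde{g}B_i=\sum_j\widetilde{g}_{ij}B_j$ with $\widetilde{g}_{ij}\in\bold Z[\rho]$. Since $\xi$ lies in the $\chi$-eigenspace and $\overline{\xi}$ in the $\overline{\chi}$-eigenspace of $\rho$, the $\rho$-compatibility of the pairing $\<\ ,\ \>$ gives $\<\rho\cdot m,\xi\>=\chi(\rho)\<m,\xi\>$ and $\<\rho\cdot m,\overline{\xi}\>=\overline{\chi}(\rho)\<m,\overline{\xi}\>$, whence $\<\mu(\widetilde{g}B_i),\xi\>=\sum_j\chi(\widetilde{g}_{ij})\<\mu(B_j),\xi\>$ and the analogous identity with $\overline{\chi}$ for $\overline{\xi}$. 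Thus passing from $\mu$ to $\mu\circ\widetilde{g}$ left-multiplies the first column of $P((x_1,x_2),\mu)$ by $\chi(\widetilde{g})$ and the second column by $\overline{\chi}(\widetilde{g})$. Forming $\eta=p_{11}/p_{21}$, $z_1=p_{31}/p_{21}$, $z_2=p_{32}/p_{22}$ and inserting the block shape $\widetilde{g}=\begin{pmatrix}g&0\\ w&1\end{pmatrix}$ reproduces the displayed transformation law; in particular the $z_2$-component acquires the conjugated entries $-\overline{w_1},\overline{w_2},-\overline{g_{21}},\overline{g_{22}}$ precisely because the bottom row is processed through $\overline{\chi}$. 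This is the asserted equivariance of $per$ under $G(1-\rho)$.

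Finally, equivariance together with Proposition \ref{quotient and iso} (which gives $\Cal M_{mk}/G(1-\rho)\simeq\Cal M$) shows that the composite $\Cal M_{mk}\xrightarrow{per}\Cal D\to\Cal D/G(1-\rho)$ is constant on $G(1-\rho)$-orbits, so it factors through $\Cal M$ and yields the map (\ref{induced map on quotient}). The step I expect to be most delicate is the bookkeeping in the second paragraph: fixing the $\rho$-compatibility convention for $\<\ ,\ \>$ so that the first column is governed by $\chi$ and the second by $\overline{\chi}$ (rather than the reverse), and checking that the conjugations and signs in the $z_2$-formula come out exactly as displayed rather than with a spurious sign or inverse. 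Once the right-action convention of \S\ref{definition of dis gp} is matched, the associativity and the factorization are purely formal.
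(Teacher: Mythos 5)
Your proposal is correct and follows essentially the same route as the paper, which itself gives no detailed proof beyond the remark that the action on $\Cal D$ is obtained ``by transporting the structure'' from the linear action of $\chi(\widetilde{g})\times\overline{\chi}(\widetilde{g})$ on pairs of lines, together with the explicit fractional-linear/affine formula. Your second paragraph correctly supplies the column-wise transformation of $P((x_1,x_2),\mu)$ under $\mu\mapsto\mu\circ\widetilde{g}$, and the signs you flag in the $z_2$-component indeed come from substituting $p_{12}/p_{22}=-\eta$ via (\ref{K3 symetricity}), exactly as the paper's displayed formula requires.
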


\subsection{Existence of an extra involution}
Let $t$ be an element in $\bold C-\{0,1\}$, and
$C$ be a curve defined by the equation
(\ref{def eq of ellipic curve with w-action}).

\begin{proposition}
\label{extra involution for general case}
\begin{enumerate}
\item
There exists a symplectic basis 
$\{\alpha_1, \alpha_2, \beta_1, \beta_2\}$
and an involution $\iota$ of $C$
satisfying the following properties.

\begin{enumerate}
\item
The relation (\ref{compatible symplectic base with rho action}) holds.
\item
The classes 
$\big[\dfrac{1}{1-\rho}\beta_1\big]$ and 
$\big[\dfrac{1}{1-\rho}\beta_2\big]$
in $\dfrac{1}{1-\rho}H_1(C)/H_1(C)$ are equal to  
$[P_0]-[P_1]$ and $[P_t]-[P_0]$ under the isomorphism $\Cal L$ defined in 
Lemma \ref{lemma:level structure on curve indep}.
\item
The following equalities hold:
\begin{equation}
\label{good base for extra involution}
\iota(\beta_1)=\beta_1,\quad
\iota(\beta_2)=-\beta_2,\quad
\iota\circ \rho=\rho^{-1}\circ\iota.
\end{equation}
\end{enumerate}
\item
Let $\{\alpha_1, \alpha_2, \beta_1, \beta_2\}$
be a symplectic basis satisfying the conditions (a), (b) of (1).
Then there exists a unique involution $\iota$ of $C$ satisfying the
equalities in (\ref{good base for extra involution}).
\end{enumerate}
 \end{proposition}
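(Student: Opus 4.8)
\emph{Strategy.} The plan is to realize the required involution as the geometric involution $\iota_C$ of Definition~\ref{def:extra involution}, to read the symplectic basis off its eigenlattices together with the $\rho$-action, and to deduce the uniqueness in (2) from the faithfulness of the action of $Aut(C)$ on $H_1(C,\bold Z)$. First I would note that the formulas $\iota_C^*(u)=\frac{1-u}{1-tu}$, $\iota_C^*(w)=(1-t)^{1/3}\frac{u(1-u)}{w(1-tu)}$ are purely algebraic, hence define a morphism $\iota_C\colon C\to C$ for every $t\in\bold C-\{0,1\}$ once a cube root $(1-t)^{1/3}$ is fixed. Using $1-\iota_C^*u=\frac{u(1-t)}{1-tu}$ and $1-t\,\iota_C^*u=\frac{1-t}{1-tu}$ one checks $\iota_C^2=\mathrm{id}$ directly, and comparing the effect on $w$ gives $\iota_C\circ\rho=\rho^{-1}\circ\iota_C$, exactly as in \S\ref{sec:extra involution} but now over the whole parameter space. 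The induced Möbius involution $\bar\iota_C\colon u\mapsto\frac{1-u}{1-tu}$ on $C/\<\rho\>=\bold P^1$ interchanges the branch points $0\leftrightarrow1$ and $\frac1t\leftrightarrow\infty$.

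For part (1) I would use that $\<\rho,\iota_C\>$ is a copy of $S_3$ acting on $H_1(C,\bold Z)$, which is free of rank two over $\bold Z[\rho]$ by Proposition~\ref{action of rho on curve total dim}; since $\iota_C\circ\rho=\rho^{-1}\circ\iota_C$, the map $\iota_C$ is $\bold Z[\rho]$-antilinear. I would pick $\beta_1$ in the $(+1)$-eigenlattice and $\beta_2$ in the $(-1)$-eigenlattice of $\iota_C$ with $h(\beta_i,\beta_j)=U_{ij}$, and set $\alpha_1=\rho(\beta_2)$, $\alpha_2=\rho(\beta_1)$; the intersection computation of Proposition~\ref{action of rho on curve total dim} then shows $\{\alpha_1,\alpha_2,\beta_1,\beta_2\}$ is symplectic, giving (a) and (c). To verify (b) I would translate these integral identities into $H_{\bold F_3}$: by antilinearity $\iota_C\big(\tfrac1{1-\rho}\beta_i\big)=\tfrac1{1-\rho^2}\iota_C(\beta_i)=\tfrac{-\rho}{1-\rho}\iota_C(\beta_i)$, so modulo $H_1(C)$ (where $\rho\equiv1$) the involution negates $\big[\tfrac1{1-\rho}\beta_1\big]$ and fixes $\big[\tfrac1{1-\rho}\beta_2\big]$; under $\Cal L$ this matches the action of $\bar\iota_C$ on $[P_0]-[P_1]$ and $[P_t]-[P_0]$, pinning those classes as required. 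For $0<t<1$ this recovers Proposition~\ref{action of extra invol on homology}.

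For the uniqueness in (2) I would argue that if $\iota,\iota'$ both satisfy (\ref{good base for extra involution}) for the same basis, then $\phi=\iota'\circ\iota$ commutes with $\rho$ (a product of two maps each anticommuting with $\rho$) and fixes $\beta_1,\beta_2$ (both maps fix $\beta_1$ and negate $\beta_2$); since $\phi$ commutes with $\rho$ it also fixes $\alpha_1=\rho(\beta_2)$ and $\alpha_2=\rho(\beta_1)$, so $\phi$ acts trivially on $H_1(C,\bold Z)$. As $C$ has genus $2$, the map $Aut(C)\to Sp(H_1(C,\bold Z))$ is injective — an automorphism trivial on $H_1$ induces the identity on $J(C)$ and hence, via Abel--Jacobi, on $C$ — so $\phi=\mathrm{id}$ and $\iota'=\iota$.

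For existence in (2) I would show the prescribed action is realized by one of the three lifts $\iota_C,\rho\iota_C,\rho^2\iota_C$ of $\bar\iota_C$. Condition (b) already forces any $\iota$ as in (c) to descend to $\bar\iota_C$, so $\iota$ must be such a lift; writing the given $\beta_1,\beta_2$ as $a\beta_1+c\beta_2$, $b\beta_1+d\beta_2$ in the part-(1) basis, the equalities (\ref{good base for extra involution}) for $\rho^k\iota_C$ amount to $\rho^k=a/\bar a=-c/\bar c=-b/\bar b=d/\bar d$. The isotropy relations $h(\beta_i,\beta_i)=0$ give $\Re(a\bar c)=\Re(b\bar d)=0$, whence $a/\bar a=-c/\bar c$ and $-b/\bar b=d/\bar d$; combined with $h(\beta_1,\beta_2)=1$ this forces $c\bar d$, and therefore $c\bar b$ and $a\bar d$, to be real, so the four ratios collapse to a single value $v$. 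The hard part will be to see that this common $v$ is a cube root of unity — equivalently, that the \emph{integral} normalization (b), and not merely its reduction mod $(1-\rho)$, is compatible with a genuine deck translate, so that some $k$ solves $\rho^k=v$. This is where the level-$(1-\rho)$ structure (and, conceptually, the Torelli realizability of the prescribed Hodge isometry) is essential, and I expect it to be the crux of the argument; the remaining verifications are the routine ones above.
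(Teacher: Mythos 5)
Your uniqueness argument for part (2) is correct and actually more explicit than the paper's (the composite $\iota'\circ\iota$ commutes with $\rho$ and fixes $\beta_1,\beta_2$, hence all of $H_1(C,\bold Z)$, hence is the identity because $C$ has genus $2$). The algebraic verification that $\iota_C$ is an involution anticommuting with $\rho$ for every $t\in\bold C-\{0,1\}$ is also fine. But there are two genuine gaps, one of which you flag yourself. The existence half of (2) is left open at precisely the decisive step: you reduce to showing that a certain common ratio $v$ is a cube root of unity and then write that you ``expect it to be the crux.'' The paper closes this in two lines. Writing the given basis as $\beta'=g\beta$ in terms of the part-(1) basis, conditions (a) and (b) together force $g\in\Gamma(1-\rho)$ (unitarity from (a), congruence to $I_2$ mod $(1-\rho)$ from (b)); the matrix identity $\overline{g}DU\,^t\overline{g}U=\overline{\det(g)}\,D$ for $g\in\Gamma$, where $D=\left(\begin{smallmatrix}1&0\\0&-1\end{smallmatrix}\right)$, gives $\iota(\beta)=\overline{\det(g)}D\beta$; and $\det(g)$ is a unit of $\bold Z[\rho]$ congruent to $1$ mod $(1-\rho)$, hence equal to $\rho^{p}$ for some $p$ (the units $\pm\rho^k$ reduce to $\pm1$ in $\bold Z[\rho]/(1-\rho)\simeq\bold F_3$ and $-1\neq1$ there). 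Conjugating, $\iota'=\rho^{p}\iota\rho^{-p}=\rho^{2p}\iota$ satisfies (\ref{good base for extra involution}). So your instinct that the level-$(1-\rho)$ structure is the essential input is right, but the missing step is an elementary computation with units, not a Torelli-type realizability statement; as written, your proof of (2) is incomplete.

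Part (1) also has gaps that the paper's different route avoids. You ``pick $\beta_1$ in the $(+1)$-eigenlattice and $\beta_2$ in the $(-1)$-eigenlattice of $\iota_C$ with $h(\beta_i,\beta_j)=U_{ij}$,'' but since $\iota_C$ is $\bold Z[\rho]$-antilinear these eigenlattices are only rank-two $\bold Z$-modules, and the existence in them of isotropic vectors forming a $\bold Z[\rho]$-basis with the prescribed Gram matrix is exactly what has to be proved. Moreover your verification of (b) shows only that $\big[\frac{1}{1-\rho}\beta_1\big]$ lies in the $(-1)$-eigenspace and $\big[\frac{1}{1-\rho}\beta_2\big]$ in the $(+1)$-eigenspace of $\overline{\iota_C}$ on $H_{\bold F_3}$; these eigenspaces are one-dimensional over $\bold F_3$, so the classes are pinned down only up to sign, and the signs are part of condition (b). The paper sidesteps both issues by deformation: the cycles and the involution are written down explicitly for $t\in(0,1)$ in \S\ref{relative homology explicit form} and \S\ref{sec:extra involution}, and for a general point of $\Cal M$ they are transported continuously along a path from a real point, conditions (a)--(c) being preserved by continuity. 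To salvage your lattice-theoretic construction you would still need such a connectedness argument (or an explicit computation at one value of $t$) to fix the normalization (b) on the nose.
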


We define an extra involution of $C$ for a general $t\in \bold C^\{0,1\}$.
\begin{definition}
The involution $\iota$ satisfying these equalities
is called the extra involution of $C$ 
with respect to the symplectic basis 
$\{\alpha_1, \alpha_2, \beta_1, \beta_2\}$.
\end{definition}
\begin{proof}
(1) In \S \ref{sec:extra involution}, 
the existence of the above basis and the involution $\iota$
is proved in the case $(x_1^0,x_2^0)\in M$.
For a general element $(x_1^1, x_2^1)$ in $\Cal M$, we choose a path 
starting from $(x_1^0, x_2^0)$
and ending with a point $(x_1^1, x_2^1) \in M$.
The values of $t$ at $(x_1^0, x_2^0)$ and $(x_1^1, x_2^1)$ 
are denoted by $t_0$ and $t_1$.
Let $\beta_1^0, \beta_2^0$ be topological cycles on $C_{t_0}$
defined in \S \ref{relative homology explicit form}.
Since the parameter $t$ varies continuously on $(x_1,x_2)$,
$\beta_1^0, \beta_2^0$ are deformed continuously and
we get cycles $\beta_1^1, \beta_2^1$ in
$H_1(C_{t_1})$, which satisfy the properties (a), (b).
The involution $\iota_0$ of
the curve $C_{t_0}$ 
induces an involution $\overline{\iota_0}$ of $\bold P^1$
satisfying $\overline{\iota_0}(P_0)=P_1$ and 
$\overline{\iota_0}(P_t)=P_{\infty}$.
We have a deformation $\overline{\iota_t}$ of the involution of $\bold P^1$
transposing $\{P_0, P_1\}$ and $\{P_t, P_{\infty}\}$
and get an involution $\overline{\iota_1}$ on $\bold P^1$ for $t=t_1$.
We can lift the involution $\overline{\iota_t}$ of $\bold P^1$
to an involution $\iota_t$ of $C_t$ which is a deformation of $\iota_0$.
Since $\iota_t$ is continuous on $t$,
the equality (c) is preserved under this deformation.

(2)
Let $\{\beta_1,\beta_2\}$ be a $\bold Z[\rho]$-basis of 
$H_1(C)$ satisfying the condition (a), (b).
We choose $\bold Z[\rho]$-basis 
$\{\beta'_1,\beta'_2\}$ of $H_1(C)$, and an involution $\iota$
of $C$ satisfying the conditions (a), (b) and (c) of (1).
Then we have
$$
\begin{pmatrix}
 \beta_1' \\ \beta_2'
\end{pmatrix}
=g\begin{pmatrix}
 \beta_1 \\ \beta_2
\end{pmatrix},\quad 
g=\begin{pmatrix}
a & b \\ c & d   
  \end{pmatrix}
\in \Gamma(1-\rho),
$$
where $\Gamma(1-\rho)$ is defined in \S \ref{definition of dis gp}.
We set 
$\displaystyle
D=\begin{pmatrix}
  1 & 0 \\ 0 & -1 
  \end{pmatrix}.
$
Then we have
\begin{align*}
\begin{pmatrix}
\iota
(\beta_1) \\ \iota(\beta_2)
\end{pmatrix}
&=\overline{g}\begin{pmatrix}
 \iota(\beta_1') \\ \iota(\beta_2')
\end{pmatrix} 
=\overline{g}D\begin{pmatrix}
\beta_1' \\ \beta_2'
\end{pmatrix}
=\overline{g}DU^t\overline{g}U
\begin{pmatrix}
\beta_1 \\ \beta_2
\end{pmatrix}\\
&=
\overline{ad-bc}\cdot
D
\begin{pmatrix}
\beta_1 \\ \beta_2
\end{pmatrix}=
\overline{\det(g)}\cdot
D
\begin{pmatrix}
\beta_1 \\ \beta_2
\end{pmatrix}.
\end{align*}
Since $g\in \Gamma(1-\rho)$, $\det(g)$ is a unit of 
$\bold Z[\rho]$
and congruent to $1$ mod $(1-\rho)$. Thus there exists $p\in\{0,1,2\}$
such that $\overline{\det(g)}=\rho^{p}$.
By setting $\iota'=\rho^p\iota \rho^{-p}$, we have
$\iota'\circ \rho=\rho^{-1}\circ \iota'$ and
\begin{align*}
\begin{pmatrix}
\iota'(\beta_1) \\ \iota'(\beta_2)
\end{pmatrix}
=\begin{pmatrix}
\rho^p\iota\rho^{-p}(\beta_1) \\ 
\rho^p\iota\rho^{-p}(\beta_2)
\end{pmatrix}
=\begin{pmatrix}
\rho^{-p}\iota(\beta_1) \\ 
\rho^{-p}\iota(\beta_2)
\end{pmatrix}
=\rho^{-p}
\overline{\det(g)}D
\begin{pmatrix}
 \iota(\beta_1) \\ \iota(\beta_2)
\end{pmatrix} 
=D
\begin{pmatrix}
 \iota(\beta_1) \\ \iota(\beta_2)
\end{pmatrix}.
\end{align*}
Therefore $\iota'$ satisfies the condition (c).
The uniqueness of $\iota$ can be proved similarly.
\end{proof}
\begin{definition}
Let 
$\{\alpha_1,\alpha_2,\beta_1,\beta_2\}$
be a symplectic basis satisfying the condition (a), (b) of
Proposition \ref{extra involution for general case} (1).
The involution $\iota$ satisfying the condition (c)
of Proposition \ref{extra involution for general case}(1) 
is called the extra involution associated to
the basis $\{\alpha_1,\alpha_2,\beta_1,\beta_2\}$.
The element $\iota_{C*}\beta_3\in H_1(C,\Sigma_2)$ is denoted by $\beta_4$.
\end{definition}
Let $((x_1,x_2),\mu)$ be an element in $\Cal M_{mk}$.
By choosing a branch of $(1-x_1)^{\frac{1}{3}}(1-x_2)^{\frac{1}{3}}$,
we obtain a rational map $\lambda:E\times C\dasharrow \widetilde{X}$
as in (\ref{quotient by order 3 action})
and elements $\beta_1, \beta_2, \beta_3$ in $H_1(C,\Sigma_1)$
such that
$$
\mu(B_i)=\lambda_*(\delta_E\otimes \beta_i).
$$

\subsection{Coincidence of period maps for K3 surfaces and
monodromy curves}
\label{sec:coincid per map}
\subsubsection{The case $(x_1,x_2)\in \Cal M$}
We compute the period matrix $P((x_1,x_2),\mu)$
for $(x_1,x_2)\in \Cal M$
using the period integrals of the curve $C$.
Let $\lambda:E\times C\dasharrow \widetilde{X}$ be a rational map in
(\ref{quotient by order 3 action}),
and $\iota_C$ be the extra involution associated to $\mu$
and $\lambda$.
Let $\psi_1$ be a non-zero element of $H^0(C,\it{\Omega}^1_C)(\chi)$
and set 
$$
\psi_2=\iota_C^*(\psi_1).
$$
Then we have 
$\psi_2\in H^0(C,\it{\Omega}_C^1)(\overline{\chi})$.
We choose elements $\Omega_E$ and $\Omega'_E$ in 
$H^1(E)(\chi)$ and
$H^1(E)(\overline{\chi})$
such that
$$\lambda^*\xi=\Omega_E\wedge \psi_1,\quad 
\lambda^*\overline{\xi}=\Omega'_E\wedge \psi_2.
$$
We choose elements $\beta_1, \beta_2, \beta_3$ in $H_1(C,\Sigma_1)$
such that
$$
\mu(B_i)=\lambda_*(\delta_E\otimes \beta_i).
$$
By setting
$$
c_E=\int_{\delta_E}\Omega_E,\quad 
\overline{c_E}=\int_{\delta_E}\Omega_E',
$$
we have
\begin{align*}
&\<\mu(B_i),\xi\>
=\<\lambda_*(\delta_E\otimes \beta_i),
\lambda^*(\xi)\>
=\<\delta_E\otimes \beta_i,
\Omega_E\wedge \psi_1\>
=
c_E \int_{\beta_i}\psi_1,
\\
&\<\mu(B_i),\overline{\xi}\>
=\<\delta_E\otimes \beta_i,
\Omega'_E\wedge \psi_2\>
=
\overline{c_E} \int_{\beta_i}\psi_2
=
\overline{c_E} \int_{\beta_i}\iota_C^*\psi_1
=
\overline{c_E} \int_{\iota_{C*}\beta_i}\psi_1.
\end{align*}
Since 
$$
\iota_{C*}\beta_1=\beta_1,\quad
\iota_{C*}\beta_2=-\beta_2,\quad
\iota_{C*}\beta_3=\beta_4,
$$
we have
\begin{align}
\label{period mantrix using y's}
P((x_1,x_2),\mu)=&
\begin{pmatrix}
y_1 &
y_1 
\\
y_2 &
-y_2 
\\
y_3 &
y_4 
\end{pmatrix}
\begin{pmatrix}
c_E& 0
\\
0&\overline{c_E}
\\
\end{pmatrix},\quad y_i=\int_{\beta_i}\psi_1\quad(i=1,2,3,4).
\end{align}
Therefore by Definition \ref{modular embedding},
we have
\begin{equation}
\label{relation between per of K3 and mon curve}
per((x_1,x_2),\mu))=(\dfrac{y_1}{y_2},\dfrac{y_3}{y_2},\dfrac{y_4}{y_2}).
\end{equation}
\begin{definition}
We define a map
$$
\jmath_{\Cal D}:\Cal D 
\ni (\eta,z
)=(\eta,z_1,z_2) \mapsto  (\tau(\eta), \zeta(z)) \in
\bold H_2\times \bold C^2
$$
by 
\begin{align*}
&\tau=\frac{1}{2}\begin{pmatrix}
\sqrt{-3}\eta^{-1}  & -1 \\
-1 & \sqrt{-3}\eta 
\\
\end{pmatrix},\\
&\zeta=
\frac{1}{2}\left(
(\dfrac{z_1}{1-\omega}-\dfrac{z_2}{1-\omega^2})\eta^{-1},
\dfrac{z_1}{1-\omega}+\dfrac{z_2}{1-\omega^2}
\right). 
\end{align*}
\end{definition}
By the equality (\ref{relation between per of K3 and mon curve}),
$\tau$ is the normalized period matrix of the curve $C$ 
for the symplectic base
$\{\alpha_1,\alpha_2,\beta_1,\beta_2\}$.
We define the Jacobian $J(C)$ of the curve $C$ by
$$
J(C)=\bold C^2/(\bold Z^2\tau \oplus\bold Z^2).
$$
The class of $\zeta$ in
the Jacobian $J(C)$ is equal to the 
image of Abel-Jacobi map for the integral on the path
$\frakl_{1-x_1}$ defined in \S \ref{subsec:level strucutre on K3}.

\subsubsection{The relation between $P((x_1,x_2),\mu)$ and
integrals $\varphi_i$ for $(x_1,x_2)\in M$}
\label{explicit markings and periods}
We give explicit computations of the period
matrix in the case $(x_1,x_2) \in M$ given in
(\ref{real domain moduli of K3}). 
We choose the marking $\mu$ by setting
$\mu(B_i)=B_i^*$ for $i=1,2,3$, where $B_i^*$ are defined in 
Proposition \ref{corresondence of cycles in ExC and X generators}.
By this choice of $\mu$, the period matrix $P((x_1,x_2),\mu)$ in 
(\ref{period mantrix using y's})
is computed from the integrals
$\varphi_1, \varphi_2, \varphi_3, \varphi_4$ in 
(\ref{integration by stnadard branch})
as follows.
We choose $\Omega_E$ and $\Omega'_E$ so that $c_E=\overline{c_E}$.
By the relation in Proposition \ref{def:marked-configuration}, 
$y_1, y_2, y_3, y_4$ in the left hand side of (\ref{period mantrix using y's})
are given by
\begin{align*}
c_Ey_1&=\omega(1-\omega^2)\varphi_2,\quad
c_Ey_2=(1-\omega^2)\varphi_1
+(1-\omega^2)\varphi_2,\\  
c_Ey_3&=(1-\omega)\varphi_3,\quad
c_Ey_4=(1-\omega^2)\varphi_4
-\omega^2(1-\omega^2)\varphi_2.
\end{align*}

\subsection{Modular embedding}
\label{def:tau from eta}
We set
\begin{align*}
Sp_2(\mathbf{Z})&=\Big\{g\in GL_4(\mathbf{Z})\Big|
\;^tgJ g=J=\begin{pmatrix} & -I \\ I &\end{pmatrix}\Big\},\\
\bold H_2& =\{\tau \in GL_2(\bold C)\mid \ ^t\tau=\tau, \quad \Im(\tau)>0\}.
\end{align*}
We introduce a symplectic form on 
$\bold Z[\rho]\beta_1\oplus \bold Z[\rho]\beta_2$
by (\ref{rho invariant symplectic form}).
Then $\alpha_1=\rho(\beta_2),\alpha_2=\rho (\beta_1),
\beta_1, \beta_2$ form a symplectic basis. 
Using this basis,
we have inclusions
$\jmath_{\Gamma}:\Gamma \to Sp_2(\bold Z)$ and
$\jmath_{G}:G(1-\rho) \to Sp_2(\bold Z)\ltimes \bold Z^4$.
More concretely, they are written as
\begin{align*}
&\jmath_H:\bold Z[\rho]^2\ni (r_1+r_2\rho,s_1+s_2\rho)=(s_2, r_2, r_1, s_1)\in
\bold Z^4,
\\
&\jmath_{\Gamma}:\Gamma\ni g=g_1+g_2\rho
\mapsto 
\begin{pmatrix}
U(g_1-g_2)U & -Ug_2\\
g_2U & g_1
\end{pmatrix}\in Sp_2(\mathbf{Z}),
\\
&\jmath_{G}:G(1-\rho)\ni 
g=
\begin{pmatrix}
g & 0\\
b & 1
\end{pmatrix}\
\mapsto 
\jmath_{\Gamma}(g)\ltimes \jmath_H(b)
\in Sp_2(\mathbf{Z}) \ltimes \bold Z^4.
\end{align*}
By the construction, and the expression of $\zeta$ in
(\ref{def of zeta}),
we have the following lemma.
\begin{lemma}
The inclusion $\jmath_{\Cal D}$ is compatible with the action of
$G(1-\rho)$through $\jmath_G$
That is, we have
$$\jmath_{\Cal D}(g\cdot x)=\jmath_{G}(g)\cdot 
\jmath_{\Cal D}(x),\quad 
x\in \Cal D,\ g\in G(1-\rho).$$

\end{lemma}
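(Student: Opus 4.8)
The plan is to exploit the identification, established in \S\ref{sec:coincid per map}, that $\jmath_{\Cal D}(\eta,z)=(\tau,\zeta)$ is exactly the pair formed by the normalized period matrix and the incomplete (Abel--Jacobi) integral of the monodromy curve $C$, taken with respect to the symplectic basis $\{\alpha_1,\alpha_2,\beta_1,\beta_2\}$ and the lift $\beta_3$ attached to the marking $\mu$ through $\mu(B_i)=\lambda_*(\delta_E\otimes\beta_i)$. The action of $\widetilde g\in G(1-\rho)$ on $\Cal M_{mk}$ is $\mu\mapsto\mu\circ\widetilde g$, which under this dictionary is a $\bold Z[\rho]$-linear change of the basis $\{\beta_1,\beta_2\}$ of $H_1(C)$ together with a shift of the lift $\beta_3$. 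Since the period matrix and the Abel--Jacobi image of a fixed curve transform in the classical way under such operations, the assertion reduces to checking that $\jmath_\Gamma$ and $\jmath_H$ record precisely these classical laws.

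First I would treat the $\tau$-component. By construction $\jmath_\Gamma(g)$ is the matrix expressing the $\bold Z[\rho]$-linear automorphism $g=g_1+g_2\rho$ of $H_1(C)=\bold Z[\rho]\beta_1\oplus\bold Z[\rho]\beta_2$ in the $\bold Z$-basis $\{\alpha_1,\alpha_2,\beta_1,\beta_2\}$; evaluating $g$ on each basis vector and rewriting $\rho(\beta_i)=\alpha_{3-i}$ through the relations (\ref{symplectic and action of rho}) and the matrix $R$ for the $\rho$-action yields exactly the block form
\[
\jmath_\Gamma(g)=\begin{pmatrix}U(g_1-g_2)U & -Ug_2\\ g_2U & g_1\end{pmatrix},
\]
and simultaneously shows that it preserves the form (\ref{rho invariant symplectic form}), i.e.\ lands in $Sp_2(\bold Z)$. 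The classical rule $\tau\mapsto(A\tau+B)(C\tau+D)^{-1}$ for the normalized period matrix under change of symplectic basis then gives $\tau(\widetilde g\cdot\eta)=\jmath_\Gamma(g)\cdot\tau(\eta)$, and one checks, using the explicit shape of $\tau$ in (\ref{eq:pt-on-J(C)}), that this matches the fractional-linear action $\eta\mapsto(g_{11}\eta+g_{12})/(g_{21}\eta+g_{22})$ prescribed on $\bold B$.

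For the $\zeta$-component I would split the semidirect factors. The translation part $b=(w_1,w_2)$ shifts $\beta_3$ by a $\bold Z[\rho]$-combination of $\beta_1,\beta_2$ and hence moves $\zeta$ by the lattice vector $\jmath_H(b)\in\bold Z^4$ inside $J(C)=\bold C^2/(\bold Z^2\tau\oplus\bold Z^2)$, while the linear part $g$ conjugates $\zeta$ by $(C\tau+D)^{-1}$ exactly as on the Jacobian fibre. Substituting these into the explicit expression (\ref{def of zeta}) for $\zeta$ and comparing with the prescribed action of $G(1-\rho)$ on $\Cal D$ completes the verification; the equality (\ref{relation between per of K3 and mon curve}) is what guarantees that the coordinates $(z_1,z_2)$ are indeed the ratios feeding into $\zeta$.

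The main obstacle will be the bookkeeping in the second coordinate of $\zeta$, where the $\overline\chi$-part forces the conjugates $\overline{w_1},\overline{w_2},\overline{g_{21}},\overline{g_{22}}$ to govern the action on $z_2$. One must verify that the diagonal $G(1-\rho)$-action---treating $z_1$ through $\chi$ and $z_2$ through $\overline\chi$---reassembles into the single real $Sp_2(\bold Z)\ltimes\bold Z^4$-action on $\zeta$ after the linear substitution $\zeta=\tfrac12\bigl((\tfrac{z_1}{1-\omega}-\tfrac{z_2}{1-\omega^2})\eta^{-1},\ \tfrac{z_1}{1-\omega}+\tfrac{z_2}{1-\omega^2}\bigr)$. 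That the $\chi$- and $\overline\chi$-data recombine consistently is exactly what the conjugation symmetry on $\bold Z[\rho]$, encoded in $\jmath_H$ via $(r_1+r_2\rho,s_1+s_2\rho)\mapsto(s_2,r_2,r_1,s_1)$, is designed to provide, and tracking the factors $\tfrac{1}{1-\omega}$ and $\tfrac{1}{1-\omega^2}$ through both actions is the only genuinely delicate step.
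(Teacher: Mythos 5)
Your proposal is correct and follows essentially the same route as the paper, whose entire proof is the remark that the lemma holds ``by the construction and the expression of $\zeta$ in (\ref{def of zeta})'': your plan simply unpacks that remark, the key point being that $\jmath_{\Gamma}(g)$ and $\jmath_H(b)$ are precisely the matrices of the $\bold Z[\rho]$-linear basis change $\mu\mapsto\mu\circ\widetilde g$ rewritten in the $\bold Z$-basis $\{\alpha_1,\alpha_2,\beta_1,\beta_2\}$ via $\alpha_1=\rho\beta_2$, $\alpha_2=\rho\beta_1$, so that the two transformation laws are one and the same operation read in two coordinate systems. One small caution: carry out the final check as the direct algebraic substitution of the $G(1-\rho)$-action formulas into $\jmath_{\Cal D}$, valid for every $(\eta,z)\in\Cal D$, rather than relying only on the identification with actual curve periods, since the latter a priori establishes the identity only on the proper subvariety $V(\vartheta)$ (for the affine-in-$z$ part this could still be rescued because the Abel--Jacobi image spans $\bold C^2$, but the direct computation avoids the issue entirely).
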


\section{Theta values and parameters of configurations}
\label{sec:theta-inv}
In this section, we show that the image of the
period map \linebreak
$per:\Cal M_{mk} \to \Cal D$
coincides with the zero locus $V(\vartheta)$ of 
a theta function $\vartheta
=\vartheta_{(-2,-1,1,2)/3}
(\jmath_{\Cal D}(\eta,z))$.
Moreover, the map (\ref{induced map on quotient}) 
induces an isomorphism
$\overline{per}:\Cal M \to V(\vartheta)/G(1-\rho)$.
We construct the inverse 
of $\overline{per}$
using modular embedding defined in Definition \ref{modular embedding}
and theta functions
(Theorem \ref{th:x1,x2-exp}).

\subsection{Theta function of the Jacobian $J(C)$ and inverse period map}
The theta function $\vartheta_{a,b}$ of $(\tau,\zeta)\in 
\mathbf{H}_2\times \mathbf{C}^2$ with characteristics $(a,b)$ ($a,b \in
\bold Q^2$) is defined by 
$$\vartheta_{a,b}(\tau,\zeta)=\sum_{n\in \mathbf{Z}^2}
\exp[\pi\sqrt{-1}\{(n+a)\tau\;^t(n+a)+2(n+a)\;^t(\zeta+b)\}].$$

In this section we study period map and its inverse using theta
functions.
The image of the map $\Cal M_{mk}\to \Cal D$ is characterized
by the following theorem. 
\begin{definition}
We define an analytic space $V(\vartheta)$
of $\Cal D$ by
$$
V(\vartheta)=\{
(\eta,z)\in \Cal D\mid 
\vartheta_{(-2,-1,1,2)/3}
(\jmath_{\Cal D}(\eta,z))
=0
\}.
$$ 
\end{definition}
\begin{theorem}
\label{th:image}
The image of $per:\Cal M_{mk} \to \Cal D$ coincides with $V(\vartheta)$.
\end{theorem}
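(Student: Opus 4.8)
The plan is to reduce the statement to the classical Riemann vanishing theorem for the genus-two monodromy curve $C=C_t$. By the equality (\ref{relation between per of K3 and mon curve}) and the definition of the modular embedding $\jmath_{\Cal D}$, the composite $\jmath_{\Cal D}\circ per$ sends a marked configuration $((x_1,x_2),\mu)$ to a pair $(\tau,\zeta)\in\bold H_2\times\bold C^2$, where $\tau$ is the normalized period matrix of $C$ for the symplectic basis $\{\alpha_1,\alpha_2,\beta_1,\beta_2\}$ and $\zeta$ is the image, under the Abel--Jacobi map with base point $P_0$, of the point of $C$ lying over $u=1-x_1$ along the path $\frakl_{1-x_1}$. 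Thus $\zeta$ always lies on the Abel--Jacobi image $W_1=AJ_{P_0}(C)\subset J(C)$. First I would invoke Riemann's theorem: for a genus-two curve the zero divisor of the Riemann theta function is a translate of $W_1$ by the vector of Riemann constants $\kappa(P_0)$. Reformulated via a theta function with characteristic, the vanishing $\vartheta_{a,b}(\tau,\zeta)=0$ is equivalent to $\zeta\in W_1$ precisely when the characteristic $(a,b)$ is chosen equal to $\kappa(P_0)$.

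The forward inclusion $per(\Cal M_{mk})\subset V(\vartheta)$ is then immediate, since $\zeta\in W_1$ for every marked configuration. The crux, and what I expect to be the main obstacle, is the explicit identification of the characteristic. I must show that, as a point of $J(C)=\bold C^2/(\bold Z^2\tau+\bold Z^2)$, the Riemann constant $\kappa(P_0)$ coincides with the Abel--Jacobi image $AJ_{P_0}(P_\infty)$ of the branch point $P_\infty$: by the level-structure computation (using $p_\infty=(1,2)$ in (\ref{eq:P-b-ch})) the class of the path $\frakl_\infty$ has coordinates $\tfrac13(-2,-1,1,2)$ in the symplectic basis, which corresponds exactly to the characteristic $(-2,-1,1,2)/3$ occurring in $\vartheta$. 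I would establish the equality $\kappa(P_0)=AJ_{P_0}(P_\infty)$ by analyzing the theta characteristics of the hyperelliptic curve $C$ in terms of its four cyclic branch points $P_0,P_1,P_t,P_\infty$, exploiting the extra involution $\iota_C$ (which interchanges $P_0\leftrightarrow P_1$ and $P_t\leftrightarrow P_\infty$ by Definition \ref{def:extra involution}) together with the $\rho$-equivariance to control the half-period ambiguity and confirm the third-integral value forced by the $\bold Z[\rho]$-structure.

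For the reverse inclusion $V(\vartheta)\subset per(\Cal M_{mk})$, I would start from a point $(\eta,z)\in V(\vartheta)$. Since the Schwarz map for the triangle $(3,\infty,\infty)$ (Remark \ref{rem Schwarz map}) surjects onto $\bold B$, there is, on a suitable branch, a parameter $t\in\bold C-\{0,1\}$ with Schwarz value $\eta$; this fixes the curve $C=C_t$ and makes $\tau=\tau(\eta)$ its normalized period matrix. The vanishing $\vartheta_{(-2,-1,1,2)/3}(\tau,\zeta(z))=0$, combined with the identification of the characteristic, forces $\zeta(z)\in W_1$, so $\zeta(z)=AJ_{P_0}(P)$ for some $P\in C$. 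Setting $1-x_1=u(P)$ and solving $t=\tfrac{1-x_1-x_2}{(1-x_1)(1-x_2)}$ for $x_2$ reconstructs a configuration $(x_1,x_2)\in\Cal M$, and it remains to produce a marking $\mu$ with $per((x_1,x_2),\mu)=(\eta,z)$ exactly. Here I would use the $G(1-\rho)$-equivariance of $per$, together with Proposition \ref{quotient and iso}, to absorb the two remaining discrete ambiguities: the choice of sheet among $\{P,\rho P,\rho^2P\}$ over $u(P)$, and the choice of lift of $\zeta(z)$ modulo the period lattice. A subsidiary point to check is that the reconstructed $(x_1,x_2)$ avoids the excluded loci $x_1,x_2\in\{0,1\}$, $x_1+x_2=1$, which correspond precisely to the cusps $u(P)\in\{0,1,1/t,\infty\}$; away from these the construction is reversible, giving the equality $per(\Cal M_{mk})=V(\vartheta)$.
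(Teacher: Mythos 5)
Your overall architecture --- identify the fibre of $V(\vartheta)$ over a fixed $\eta$ with the Abel--Jacobi image $W_1=AJ_{P_0}(C)\subset J(C)$, then reconstruct $(x_1,x_2)$ from a point of $W_1$ and absorb the discrete ambiguities by $G(1-\rho)$ --- is sound, and your treatment of the reverse inclusion (surjectivity of the Schwarz map onto $\bold B$, exclusion of the loci $u(P)\in\{0,1,1/t,\infty\}$) is more explicit than the paper's. But your route to the central identity $V(\vartheta_{(-2,-1,1,2)/3})=W_1$ is genuinely different from the paper's, and that is where your argument is thinnest. The paper never computes the Riemann constant: Lemma \ref{lem:rho-act} uses Igusa's transformation formula for the symplectic matrix representing $\rho$ to compute the vanishing order of $\vartheta_{[m]}(p)$ at each branch point modulo $3$, Table \ref{tab:zero} shows $\vartheta_{[1,2]}$ vanishes to order $\ge 2$ at each of $P_0,P_1,P_t,P_\infty$, hence has at least $8>2=g$ zeros on $C$, so by Riemann's theorem it vanishes identically on $W_1$, and equality follows from irreducibility of the theta divisor. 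You instead propose to pin down the translate via the Riemann constant. This can be made to work, but your justification (``corresponds exactly to the characteristic $(-2,-1,1,2)/3$'') only identifies the translate of $\Theta$ defining $V(\vartheta)$ as $AJ_{P_0}(P_\infty)$; it does not prove $\kappa(P_0)=AJ_{P_0}(P_\infty)$. The general theory gives $2\kappa=AJ_{P_0}(K_C)=AJ_{P_0}(P_0+P_\infty)=AJ_{P_0}(P_\infty)$ (from $\operatorname{div}(\psi_2)=P_0+P_\infty$), which, since $AJ_{P_0}(P_\infty)$ is $3$-torsion, determines $\kappa$ only up to one of the sixteen $2$-torsion points; moreover none of $P_0,P_1,P_t,P_\infty$ is a Weierstrass point of the genus-$2$ curve (no $2P_i$ is canonical), so the classical hyperelliptic characteristic tables you invoke do not apply directly. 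The $2$-torsion ambiguity can indeed be killed by $\rho$-equivariance --- $\rho$ fixes each $P_i$, the matrix $R$ produces no half-integer characteristic shift (which is exactly why the paper's multipliers are cube roots of unity), so $\kappa$ is $\rho$-invariant, and $\rho$ acts on $J(C)[2]\simeq \bold F_4^2$ with no nonzero fixed points --- but that computation is the entire content of the step and your proposal only gestures at it. If you supply it, your argument yields the set-theoretic equality directly from Riemann's vanishing theorem, without the paper's appeal to irreducibility of the theta divisor.
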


We fix a real number $t$ satisfying $0<t<1$.
Let
$C=C_t$ be the curve defined in
(\ref{def eq of ellipic curve with w-action}).
We choose $x_1 \in \bold R$ such that $0<x_1<1$.
Then $x_2$ is determined by the equality 
(\ref{definition of t}).
We use the symplectic basis in \S \ref{relative homology explicit form}.

Let $p=(u,w)$ be the point on the second sheet in $C$ 
with $u=1-x_1$, and $\delta_3$ be a path
from $P_0$ to $p$ on this sheet.
Then $\zeta\in \mathbf{C}^2$ defined  
in (\ref{def of zeta})
 is a vector-valued 
function on $x_1$.
This function is continued analytically to 
a holomorphic function 
$\zeta(\widetilde{p})$ on 
the universal covering $\widetilde{C}$ of $C$.
Since $\zeta(\widetilde{p})$ is a multivalued function on
$p$ in $C$, the function
$$
\vartheta_{a,b}(p)=\vartheta_{a,b}(\tau(\eta),\zeta(p))
$$ 
is a multivalued function on $p$.
By the quasi periodicity of the theta function, 
a zero point of this function and its order are well defined
on $J(C)$.
Riemann's theorem states that if 
$\vartheta_{a,b}(p)$ is not 
identically zero then it has two zero points 
with counting multiplicity.

\begin{lemma}
\label{lem:rho-act}
For $m\in \mathbf{Z}^2$, the order of zero of 
$$\vartheta_{[m]}(p)=\vartheta_{-mU/3,m/3}(p)$$ 
at $p=P_i$ $(i=0,1,t,\infty)$ is congruent to 
$$(-1)^{r_i} (m+p_i) U \;^t  (m+p_i)\bmod 3.$$ 
Here $P_i\in C$ and $p_i\in \mathbf{Z}^2$ are given in 
(\ref{eq:ramification-pts})
and 
(\ref{eq:P-b-ch}), 
respectively, and
$$r_0=1,\quad r_1=2,\quad r_t=2,\quad r_\infty=1.$$
\end{lemma}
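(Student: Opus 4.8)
The plan is to read off the order of vanishing by a local eigenvalue computation at the $\rho$-fixed points $P_i$, matching the way the multivalued function $\vartheta_{[m]}(p)=\vartheta_{[m]}(\tau(\eta),\zeta(p))$ transforms under $\rho$ against the way a local uniformizer at $P_i$ transforms. First I would record the local action of $\rho$. Each $P_i$ is a total ramification point of the triple cover (\ref{def eq of ellipic curve with w-action}) and hence a fixed point of $\rho$; reading off the orders of vanishing of $u$ and $w$ from $w^3=u(1-u)^2(1-tu)^{-1}$ at $u=0,1,1/t,\infty$ shows that a uniformizer $s_i$ at $P_i$ can be chosen with $\rho(s_i)=\omega^{r_i}s_i$, where $r_0=r_\infty=1$ and $r_1=r_t=2$. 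These are exactly the exponents named in the statement, and the choice is consistent with $\rho^\ast\psi_1=\omega\psi_1$ in (\ref{action of rho on diff form}), since $\psi_1$ then has local order $0,1,1,0$ at $P_0,P_1,P_t,P_\infty$.

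Next I would describe how $\vartheta_{[m]}(p)$ transforms under $\rho$. Because $\rho$ fixes the base point $P_0$, the Abel--Jacobi map is $\rho$-equivariant, so $\zeta(\rho p)=N\zeta(p)$, where $N$ is the linear action induced on $\mathbf{C}^2$ by the symplectic matrix $R=\left(\begin{smallmatrix}-I&-U\\U&O\end{smallmatrix}\right)\in Sp_2(\mathbf{Z})$, which fixes the period matrix $\tau$ of (\ref{eq:pt-on-J(C)}). A direct computation with the action of $R$ on theta characteristics shows that $R$ sends $[m]=(-mU/3,\,m/3)$ to a characteristic congruent to $[m]$ modulo integral translations of the second component. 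Applying the standard symplectic transformation formula for theta functions at the fixed point $\tau$ therefore yields $\vartheta_{[m]}(\rho p)=F(p)\,\vartheta_{[m]}(p)$, where $F$ is an explicit automorphy factor of the shape (root of unity)$\,\times\exp(\text{quadratic in }\zeta(p))$.

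Finally I would evaluate $F$ at $P_i$ and match leading terms. From the congruence for $\frakl_i$ in the symplectic basis stated just before (\ref{eq:P-b-ch}) one gets $\zeta(P_i)\equiv\frac13(-p_iU\tau+p_i)$ modulo $\mathbf{Z}^2\tau+\mathbf{Z}^2$, a $3$-torsion point that is $N$-invariant since $\rho(P_i)=P_i$. Using this $N$-invariance together with the quasi-periodicity of $\vartheta$ under the lattice vector $N\zeta(P_i)-\zeta(P_i)$, all $\tau$-dependence in $F(P_i)$ cancels and one is left with the pure cube root of unity $F(P_i)=\omega^{-(m+p_i)U\,^t(m+p_i)}$; concretely, translation by $\zeta(P_i)$ converts $\vartheta_{[m]}$ into $\vartheta_{[m+p_i]}$ up to a nonzero scalar, so the quadratic form of the shifted characteristic $m+p_i$ is what appears. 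Writing $\vartheta_{[m]}(p)=c\,s_i^{d_i}(1+O(s_i))$ near $P_i$ and substituting $s_i\mapsto\omega^{r_i}s_i$ into $\vartheta_{[m]}(\rho p)=F(p)\vartheta_{[m]}(p)$ gives $\omega^{r_id_i}=F(P_i)=\omega^{-(m+p_i)U\,^t(m+p_i)}$, hence $d_i\equiv -r_i^{-1}(m+p_i)U\,^t(m+p_i)\bmod 3$. Since $r_i\in\{1,2\}$ satisfies $r_i^{-1}\equiv r_i$ and $-r_i\equiv(-1)^{r_i}\pmod 3$, this is precisely $d_i\equiv(-1)^{r_i}(m+p_i)U\,^t(m+p_i)\bmod 3$. (Note the identity $-r_i\equiv(-1)^{r_i}$ also forces the sign in $F(P_i)$ to be the minus sign.)

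The main obstacle is the third step: running the symplectic transformation formula carefully enough that the eighth-root-of-unity multiplier $\kappa(R)$, the branch of $\det(U\tau)^{1/2}$, and the Gaussian exponential combine --- after imposing the $N$-invariance of the $3$-torsion point $\zeta(P_i)$ and the quasi-periodicity correction --- into the single clean cube root $\omega^{-(m+p_i)U\,^t(m+p_i)}$, with no residual dependence on $\tau$ or on the chosen uniformizer. One must also keep in mind that $\vartheta_{[m]}(p)$ is only quasi-equivariant, since the factor $F$ is nonconstant, so the eigenvalue matching is legitimate only fiberwise at the fixed points $P_i$; this is exactly what is needed to extract $d_i\bmod 3$.
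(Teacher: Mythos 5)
Your proposal is correct and follows essentially the same route as the paper: a local eigenvalue computation at the fixed points $P_i$ (with the same exponents $r_i$ for the uniformizer), the transformation of $\vartheta_{[m]}$ under the symplectic matrix representing $\rho$ (which fixes $\tau$), the shift of characteristic by $p_i$ via the $3$-torsion point $\zeta(P_i)$ --- the paper's auxiliary functions $\vartheta^i_{[m]}$ are exactly this translation --- and a matching of cube roots of unity modulo $3$. The only difference is bookkeeping: you extract a uniform factor $\omega^{-(m+p_i)U\,^t(m+p_i)}$ and then divide by $r_i$, whereas the paper absorbs that division into the displayed $(-1)^{r_i}$; both give the congruence of the lemma, as one can check against Table \ref{tab:zero}.
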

\proof
We use the following fundamental property:
if a holomorphic function $f$ around $z=0$ satisfies 
$$\lim_{z\to 0}\frac{f(\omega  z)}{f(z)}=\omega ^k\quad (k=0,1,2)$$
then the order of zero of $f(z)$ at $z=0$ is congruent to $k$ 
modulo $3$. 

We consider the pull back $\rho^*(\vartheta_{[m]})(p)$ of $\vartheta_{[m]}(p)$ 
under the covering transformation $\rho$. 
One can choose a local parameter $v$ 
of $P_0$ (resp. $P_1$, $P_t$ and $P_{\infty}$)
on a neighborhood $U_0$ 
(resp. $U_1$, $U_t$ and $U_{\infty}$) 
such that $\rho^*v=\omega v$ (resp. 
$\rho^*v=\omega^2v$,
$\rho^*v=\omega^2v$ and $\rho^*v=\omega v$).
For a point $p$ in the neighborhood $U_i$, we choose a path 
from $P_i$ to $p$ in $U_i$ and define a function
$$\vartheta_{[m]}^i(p)=
\vartheta_{-(m+p_i)U/3,(m+p_i)/3}
\left(\tau,\int_{P_i}^{p}\Psi\right),\quad 
\Psi=(\psi_1,\psi_2)(\tau_B)^{-1}
$$
on $U_i$, where $\psi_1, \psi_2$
is defined in \S \ref{relative homology explicit form}.
Then we have
$\vartheta_{[m]}(p)=h(p)\vartheta_{[m]}^i(p)$ for a non-zero
holomorphic function $h(p)$.

We compute the limit 
$\lim\limits_{p\to P_i}\dfrac{\rho^*(\vartheta_{[m]}^i(p))}
{\vartheta_{[m]}^i(p)}$
as follows. We have 
\begin{eqnarray*}
\frac{\rho^*(\vartheta_{[m]}^i(p))}{\vartheta_{[m]}^i(p)} &=&
\frac{\vartheta_{-(m+p_i)U/3,(m+p_i)/3}
 \left(\tau,\int_{P_i}^{p}\rho^*(\Psi)\right)}
{\vartheta_{-(m+p_i)U/3,(m+p_i)/3}
 \left(\tau,\int_{P_i}^{p}\Psi\right)} \\
&=&
\frac{\vartheta_{-(m+p_i)U/3,(m+p_i)/3}
 \left(\tau,\int_{P_i}^{p} \Psi(-U\tau-I_2)^{-1}\right)}
{\vartheta_{-(m+p_i)U/3,(m+p_i)/3}
 \left(\tau,\int_{P_i}^{p}\Psi\right)} \\
&=&
\frac{\vartheta_{-(m+p_i)U/3,(m+p_i)/3}
 \left(\tau^\#,\zeta^\#\right)}
{\vartheta_{-(m+p_i)U/3,(m+p_i)/3}
 \left(\tau,\zeta\right)}. \\
\end{eqnarray*}
Here we set $\tau^\#=\tau$, $\zeta=\int_{P_i}^{p}\Psi$ and 
$\zeta^\#=\zeta(-U\tau-I_2)^{-1}$, and  
use 
$$\rho^*(\Psi)=(\psi_1,\psi_2) W 
(\tau_B)^{-1}=\Psi(U\tau)=\Psi(-U\tau-I_2)^{-1},
$$
where 
$
\displaystyle
W=\begin{pmatrix} \omega &  0 \\
0 & \omega^2
\end{pmatrix}.
$
Applying Corollary in p.85 and Corollary in p.176 of \cite{I} to 
$\sigma=\rho^{-1}=\begin{pmatrix} O & U \\ -U& -I_2\end{pmatrix}\in Sp_2(\mathbf{Z})$, 
we can compute the limit of the last row as $p\to P_i$, and 
$$
\lim_{p\to P_i}\frac{\rho^*(\vartheta_{[m]}^i(p))}
{\vartheta_{[m]}^i(p)}=
\exp\left(\frac{2\pi\sqrt{-1}}{3}(-1)^{r_i}(m+p_i)U\;^t (m+p_i)\right).
$$
The fundamental property yields this lemma.
\qed\bigbreak

Lemma \ref{lem:rho-act} yields a list of orders of zero of $\vartheta_{[m]}(p)$
at $P_i$ $(i=0,1,t,\infty)$ modulo $3$.

\begin{table}[h]
  \centering
  \begin{tabular}[center]{|c|c|c|c|c|}
\hline
$[m]\diagdown p$ & $P_0$ & $P_1$ & $P_t$ & $P_\infty$\\
\hline
$[2,1]$ & $2$ & $0$ & $0$ & $0$ \\
$[0,1]$ & $0$ & $2$ & $0$ & $0$ \\
$[2,0]$ & $0$ & $0$ & $2$ & $0$ \\
$[0,0]$ & $0$ & $0$ & $0$ & $2$ \\
\hline
$[1,1]$ & $1$ & $1$ & $0$ & $0$ \\
$[2,2]$ & $1$ & $0$ & $1$ & $0$ \\
$[0,2]$ & $0$ & $1$ & $0$ & $1$ \\
$[1,0]$ & $0$ & $0$ & $1$ & $1$ \\
\hline
$[1,2]$ & $2$ & $2$ & $2$ & $2$ \\
\hline
  \end{tabular}
\vspace{2mm}
  \caption{List of orders of zero of $\vartheta_{[m]}(p)$}
  \label{tab:zero}
\end{table}

\begin{proof}[Proof of Theorem \ref{th:image}]
We fix an element $\eta \in \bold B$.  By Table \ref{tab:zero}, 
$\vartheta_{[1,2]}(p)=\vartheta_{(-2,-1,1,2)/3}(\tau(\eta),\zeta(p))$ has zeros 
more then 2 (see \cite{M}). Riemann's theorem implies that 
this function of $p\in C$ is identically zero.  
Since the theta divisor is irreducible,  
the image of the period $[\zeta(p)]\in J(C)$ coincides with
$$\{[\zeta]\in J(C)\mid \vartheta_{(-2,-1,1,2)/3}(\tau(\eta),\zeta)=0\}.$$
 \end{proof}
 
We have the following inverse period map for pairs $(C,p)$ of
curve $C$ and a point $p$ on it.
\begin{theorem}
\label{th:u-exp}
Let $p=(u,w)$ be a point of $C$.
The meromorphic function $u$ on $C$ is expressed as
\begin{equation}
\label{eq:u-express}
u=1-\frac{\vartheta_{[0,1]}^3(p)}{\vartheta_{[0,2]}^3(p)}
=1-\frac{\vartheta_{(-1,0,0,1)/3}^3(\tau(\eta),\zeta(p))}
{\vartheta_{(-2,0,0,2)/3}^3(\tau(\eta),\zeta(p))}.
\end{equation}
In particular, 
$$\frac{1}{t}=1-\frac{\vartheta_{(0,0,0,0)/3}^3(\tau(\eta))}
{\vartheta_{(-1,0,0,1)/3}^3(\tau(\eta))}.
$$
\end{theorem}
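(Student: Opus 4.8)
The plan is to prove both identities by the principle that a meromorphic function on the genus-two curve $C$ is determined by its divisor up to a scalar, reading off the divisors of the relevant theta sections from Table \ref{tab:zero}.

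First I would treat the function $u$ itself. Since the projection $C\to\bold P^1$, $(w,u)\mapsto u$, is totally ramified over $u=0,1,\frac1t,\infty$ at the points $P_0,P_1,P_t,P_\infty$ of (\ref{eq:ramification-pts}), the function $1-u$ has a triple zero at $P_1$ and a triple pole at $P_\infty$, i.e. $\operatorname{div}(1-u)=3P_1-3P_\infty$. On the theta side, Riemann's theorem guarantees that each $\vartheta_{[m]}(p)$ that is not identically zero has exactly two zeros on $C$; combined with the residues modulo $3$ in Table \ref{tab:zero} this forces $\operatorname{div}(\vartheta_{[0,1]})=2P_1$ and $\operatorname{div}(\vartheta_{[0,2]})=P_1+P_\infty$. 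Hence $\vartheta_{[0,1]}^3/\vartheta_{[0,2]}^3$ has divisor $6P_1-(3P_1+3P_\infty)=3P_1-3P_\infty=\operatorname{div}(1-u)$, so $1-u=c\,\vartheta_{[0,1]}^3/\vartheta_{[0,2]}^3$ for a constant $c$. To pin down $c$ I would specialize to $P_0$, where $\zeta(P_0)=0$ and $1-u=1$, giving $c=\vartheta_{[0,2]}^3(\tau)/\vartheta_{[0,1]}^3(\tau)$; the evenness of theta constants $\vartheta_{-a,-b}(\tau,0)=\vartheta_{a,b}(\tau,0)$ together with integral shifts of the characteristic yields $\vartheta_{[0,2]}(\tau)=\vartheta_{[0,1]}(\tau)$, so $c=1$. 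This proves the first displayed formula, the second being a restatement in the $(a,b)/3$ notation.

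For the final assertion I would put $p=P_t$ in the formula just proved, where $u=\frac1t$, obtaining $\frac1t=1-\vartheta_{[0,1]}^3(P_t)/\vartheta_{[0,2]}^3(P_t)$, and then rewrite the two values at $P_t$ as theta constants. From the level-structure description $\frakl_t\equiv\frac13(-p_tU,p_t)\begin{pmatrix}\alpha\\ \beta\end{pmatrix}$ in \S\ref{subsec:level strucutre on K3} and the normalizations $\int_\beta\Psi=I$, $\int_\alpha\Psi=\tau$, the Abel--Jacobi image is the $3$-torsion point $\zeta(P_t)=r\tau+s$ with $r=-\frac13p_tU=(-\frac23,0)$ and $s=\frac13p_t=(0,\frac23)$, using $p_t=(0,2)$ from (\ref{eq:P-b-ch}). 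The translation formula for theta functions (Igusa \cite{I}),
$$\vartheta_{a,b}(\tau,r\tau+s)=\exp\big(-\pi\sqrt{-1}\,r\tau\,{}^tr-2\pi\sqrt{-1}\,r\,{}^t(s+b)\big)\,\vartheta_{a+r,\,b+s}(\tau,0),$$
sends the characteristic $[0,1]$ to $[0,0]$ and $[0,2]$ to $[0,1]$ after reducing $a+r$ and $b+s$ modulo integers. The essential observation is that the automorphy factor depends on the characteristic only through $r\,{}^tb$, and $r\,{}^tb_1=r\,{}^tb_2=0$ because $r$ has vanishing second entry while $b_1=(0,\frac13)$, $b_2=(0,\frac23)$ have vanishing first entry; thus the two factors coincide and cancel in the ratio of cubes, giving $\vartheta_{[0,1]}^3(P_t)/\vartheta_{[0,2]}^3(P_t)=\vartheta_{[0,0]}^3(\tau)/\vartheta_{[0,1]}^3(\tau)$, as required.

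I expect the main obstacle to be the careful bookkeeping of the theta automorphy factors in this last step. Because $\vartheta_{[0,1]}^3(P_t)/\vartheta_{[0,2]}^3(P_t)$ is the value of the single-valued function $1-u$ at $P_t$, the cube-root ambiguities coming from $\vartheta^3$ and the lattice-translation factors are immaterial; what must genuinely be checked is that the exponential contributions from translating by $\zeta(P_t)$, and from reducing $a+r,b+s$ into the fundamental range, are literally the same for the two characteristics so that they disappear after cubing and dividing. A secondary point is confirming that Table \ref{tab:zero}, which records orders only modulo $3$, gives the true multiplicities here---which is forced by the genus-two count of exactly two zeros---and that $\zeta(P_t)$ is evaluated with the sign and transpose conventions consistent with (\ref{def of zeta}).
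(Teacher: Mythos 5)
Your proposal is correct and follows the paper's own argument essentially verbatim: compare the divisor of $1-u$ (namely $3P_1-3P_\infty$) with that of $\vartheta_{[0,1]}^3/\vartheta_{[0,2]}^3$ read off from Table \ref{tab:zero} together with Riemann's two-zero count, normalize the constant at $p=P_0$, and obtain the $1/t$ formula by substituting $p=P_t$ and applying the translation formula $\vartheta_{a,b}(\tau,c\tau+d)=\exp[-\pi\sqrt{-1}(c\tau\,{}^tc+2c\,{}^t(b+d))]\vartheta_{a+c,b+d}(\tau,0)$. The only difference is that you spell out details the paper leaves implicit (the exact divisors $2P_1$ and $P_1+P_\infty$, the theta-constant identity $\vartheta_{[0,1]}(\tau,0)=\vartheta_{[0,2]}(\tau,0)$ forcing $c=1$, and the cancellation of the automorphy factors via $r\,{}^tb=0$), all of which check out.
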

\begin{remark}
The above formula for $t$ gives the inverse of the Schwartz map 
referred in Remark \ref{rem Schwarz map}.
\end{remark}

\begin{proof}
(1)
The divisor of the meromorphic function $1-u$ is $3P_1-3P_\infty$. 
By Table \ref{tab:zero}, 
$\vartheta_{[0,1]}(p)^3/\vartheta_{[0,2]}(p)^3$ has the same divisor.
Thus their ratio is a constant.
By putting $p=P_0$, we see that this constant is $1$. 
We obtain the expression of $1/t$ by  
substituting $p=P_t$ into (\ref{eq:u-express}) and using the formula
$$\vartheta_{a,b}(\tau,c\tau+d)
=\exp[-\pi\sqrt{-1}(c\tau \;^t c+2c\;^t(b+d))]\vartheta_{a+c,b+d}(\tau,0)$$ 
for $a,b,c,d\in \mathbf{Q}^2$. 
\end{proof}

\subsection{Inverse period map for configuration space}
Using Theorem \ref{th:u-exp}, 
we give the inverse period map 
$V(\vartheta)/G(1-\rho) \to \Cal M$
in terms of theta functions.
\begin{theorem}
\label{th:x1,x2-exp}
Let $((x_1,x_2),\mu)$ be an element in $\Cal M_{mk}$
and $(\eta,\zeta)=per((x_1,x_2),\mu ) \in \Cal D$ 
be the image of $(x_1, x_2, \mu)$ under the period map $per$ defined in 
Definition \ref{modular embedding}.
Let $\tau=\tau(\eta)$ be an element in $\bold H_2$ defined in 
\S \ref{def:tau from eta}.

Then the element $(x_1,x_2)\in \Cal M$ is
obtained by the following theta values:
\begin{align*}
x_1&=\frac
{\vartheta_{(-1,0,0,1)/3}^3(\tau,\zeta)}
{\vartheta_{(-2,0,0,2)/3}^3(\tau,\zeta)}, 
\\
x_2&=\frac
{\vartheta_{(-1,0,0,1)/3}^3(\tau,\iota^*(\zeta))}
{\vartheta_{(-2,0,0,2)/3}^3(\tau,\iota^*(\zeta))},
\end{align*}
where 
$$\iota(\tau,\zeta)=(\tau,\iota^*(\zeta)),\quad 
\iota^*(\zeta)=\zeta\begin{pmatrix}
1 & 0\\
0 &-1
\end{pmatrix}
+\left(0,\frac{\sqrt3\mathbf{i}}{3} \eta\right).
$$
\end{theorem}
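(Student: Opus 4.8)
The plan is to reduce the whole statement to the single‑coordinate inverse formula already obtained in Theorem \ref{th:u-exp}, evaluated at the two points of $C$ whose $u$‑values are $1-x_1$ and $1-x_2$. Recall from the setup of \S\ref{sec:theta-inv} (the discussion preceding Table \ref{tab:zero}) that the $\zeta$ appearing in the period is the normalized Abel--Jacobi image $\zeta=\zeta(p_1)$ of the point $p_1$ with $u(p_1)=1-x_1$, taken along the path to $u=1-x_1$. Substituting $p=p_1$ into (\ref{eq:u-express}) gives $1-x_1=1-\vartheta_{[0,1]}^3(\tau,\zeta)/\vartheta_{[0,2]}^3(\tau,\zeta)$, which is exactly the asserted formula for $x_1$. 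Hence the entire content lies in the formula for $x_2$.

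For $x_2$ I would use the extra involution $\iota_C$ of Proposition \ref{extra involution for general case}. By the transposition $1-x_1\leftrightarrow 1-x_2$ of $u$‑values induced by $\iota_C$ (established in \S\ref{sec:extra involution}), the point $p_2:=\iota_C(p_1)$ lies in $\Sigma_2$ and satisfies $u(p_2)=\tfrac{1-u(p_1)}{1-t\,u(p_1)}=1-x_2$, the last equality following from (\ref{definition of t}). Applying (\ref{eq:u-express}) at $p=p_2$ then yields $x_2=\vartheta_{[0,1]}^3(\tau,\zeta(p_2))/\vartheta_{[0,2]}^3(\tau,\zeta(p_2))$, so it remains only to identify the class of $\zeta(p_2)$ in $J(C)$ with the vector $\iota^*(\zeta)$ of the statement.

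To compute $\zeta(p_2)=\zeta(\iota_C(p_1))$ I would split the Abel--Jacobi integral as $\zeta(\iota_C(p_1))=\zeta(\iota_C(P_0))+\int_{P_0}^{p_1}\iota_C^{*}\Psi$, where $\Psi=(\psi_1,\psi_2)\tau_B^{-1}$ is the normalized differential. For the linear part, since $\iota_C(\beta_1)=\beta_1$ and $\iota_C(\beta_2)=-\beta_2$ by (\ref{good base for extra involution}), integrating $\iota_C^{*}\Psi$ over $\beta_1,\beta_2$ gives $e_1$ and $-e_2$; thus in the normalized basis $\iota_C^{*}\Psi=\Psi D$ with $D=\begin{pmatrix}1&0\\0&-1\end{pmatrix}$, so the linear part of $\iota^*$ is exactly $D$. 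The translation is $\zeta(\iota_C(P_0))=\zeta(P_1)$, as $\iota_C$ sends $P_0$ to $P_1$. Here I would use the level‑structure expansion $\frakl_1\equiv\frac13(-p_1U,\,p_1)\begin{pmatrix}\alpha\\\beta\end{pmatrix}$ modulo $H_1(C)$ from \S\ref{subsec:level strucutre on K3}, with $p_1=(1,0)$, giving $\frakl_1\equiv\frac13(\beta_1-\alpha_2)$ and hence $\zeta(P_1)\equiv\frac13\bigl(e_1-(\tau_{21},\tau_{22})\bigr)=(\tfrac12,-\tfrac13\tau_{22})$ modulo the period lattice $\bold Z^2+\bold Z^2\tau$. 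Adding the lattice vector $(0,1)\tau=(-\tfrac12,\tau_{22})$ normalizes this to $(0,\tfrac23\tau_{22})=(0,\tfrac{\sqrt3\mathbf{i}}{3}\eta)$ by (\ref{eq:pt-on-J(C)}), which is precisely the translation in $\iota^*(\zeta)$. Therefore $\zeta(p_2)\equiv\zeta D+(0,\tfrac{\sqrt3\mathbf{i}}{3}\eta)=\iota^*(\zeta)$ in $J(C)$.

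The point demanding care — and the main obstacle — is that the $x_2$‑formula is evaluated at the normalized representative $\iota^*(\zeta)$ rather than at $\zeta(p_2)$ itself. This is legitimate because $\vartheta_{[0,1]}^3/\vartheta_{[0,2]}^3$ descends to a single‑valued meromorphic function on $J(C)$ (it is the pullback of $1-u$ under Abel--Jacobi, as already used in Theorem \ref{th:u-exp}), so its value is unchanged by the lattice shift. Verifying this amounts to checking that the quasi‑periodicity factors of $\vartheta_{[0,1]}^3$ and $\vartheta_{[0,2]}^3$ under $\zeta\mapsto\zeta+(0,1)\tau$ agree; this is the one genuinely computational step, and it is where the cubes and the specific characteristics $(-1,0,0,1)/3$ and $(-2,0,0,2)/3$ are essential.
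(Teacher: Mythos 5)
Your proposal is correct and follows essentially the same route as the paper: both obtain $x_1$ by substituting the point with $u=1-x_1$ into Theorem \ref{th:u-exp}, and both obtain $x_2$ by applying that formula at the image of this point under the extra involution and identifying its Abel--Jacobi vector with $\zeta\begin{pmatrix}1&0\\0&-1\end{pmatrix}+\bigl(0,\tfrac{\sqrt{3}\mathbf{i}}{3}\eta\bigr)$. The only difference is bookkeeping: the paper derives the affine shift from the incomplete integrals $\dot y_3',\dot y_4'$ via the relations (\ref{homological identity 2}) and (\ref{equation gamma1-4}), whereas you split off the translation $\zeta(P_1)$ and the linear part $\iota_C^*\Psi=\Psi\begin{pmatrix}1&0\\0&-1\end{pmatrix}$ --- the same computation in different clothes, with your explicit remark that the cubed theta ratio descends to $J(C)$ making the lattice normalization cleanly justified.
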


\begin{proof}
By the condition of $x_1,x_2$, we have $x_1x_2(1-x_1)(1-x_2)(1-x_1-x_2)\ne0$. 
We have a curve $C_t$ for 
$t=\dfrac{1-x_1-x_2}{(1-x_1)(1-x_2)}\in (0,1)$
and a point $p\in C_t$ whose $u$-coordinate is $1-x_1$.
Apply to (\ref{eq:u-express}) for the point $p\in C_t$.
Then we have 
$$1-x_1=1-\frac{\vartheta_{(-1,0,0,1)/3}^3(\tau,\zeta)}
{\vartheta_{(-2,0,0,2)/3}^3(\tau,\zeta)}.
$$
This gives an expression of $x_1$ in terms of theta values at 
$(\tau,\zeta)$. 
To obtain the expression of $x_2$, we use theta values at 
$(\tau,\iota^*(\zeta))$ instead of $(\tau,\zeta)$ in the above expression.   
The vector $\iota^*(\zeta)$ is computed by 
$\dot y_3'$ and  $\dot y_4'$ in stead of $y_3'$ and  $y_4'$, where 
$$\dot y_3'=\int_{\widetilde \Gamma_4}\Omega_X
 \quad 
 \dot y_4'
 =\int_{\widetilde \Gamma_3}\Omega_X-\omega^2\int_{\widetilde \Gamma_2}\Omega_X.
$$ 
Comparing with (\ref{equation gamma1-4}), 
(\ref{definition of y_i}) and (\ref{scale change}),
we have 
\begin{eqnarray*}
\iota^*(\zeta)
&=&\frac{1}{2}\left(\frac{\dot y_3'+\dot y'_4}{y_1},
\frac{\dot y'_3-\dot y_4'}{y_2}\right)\\
&=&\frac{1}{2}\left(\frac{y_3'+y'_4}{y_1},
\frac{y'_4-y_3'+(2\omega^2/(\omega-1))y_1 }{y_2}\right)\\
&=&\frac{1}{2}\left(\frac{y_3'+y'_4}{y_1},
-\frac{y'_3-y_4'}{y_2}\right)
+\left(0,\frac{\sqrt{3}\mathbf{i}}{3}\eta\right).
\end{eqnarray*}

\end{proof}

\end{document}